        \pgfplotsset{compat=1.6}
\definecolor{smoked}{RGB}{216, 212, 204}
\definecolor{mauve}{RGB}{200, 55, 171}
\definecolor{apricot}{RGB}{250, 144, 4}
\definecolor{sky}{RGB}{66, 169, 244}
\definecolor{plum}{RGB}{76, 0, 102}
\definecolor{lightmauve}{RGB}{232, 173, 220}
\definecolor{lightapricot}{RGB}{253, 211, 155}
\definecolor{lightsky}{RGB}{178, 221, 251}
\definecolor{lightplum}{RGB}{184, 153, 192}
\newcommand{\Z}{{\mathbb Z}}
\newcommand{\C}{{\mathbb C}}
\newcommand{\R}{{\mathbb R}}
\newcommand{\Q}{{\mathbb Q}}
\newcommand{\CP}{{\mathbb{CP}}}
\DeclareMathOperator\Orb{Orb}
\renewcommand{\Im}{\mathrm{Im}}
\DeclareMathOperator{\PSL}{\mathrm{PSL}}
\DeclareMathOperator{\psl}{\PSL_2\R}
\DeclareMathOperator{\SL}{\mathrm{SL}}
\DeclareMathOperator{\SU}{\mathrm{SU}}
\DeclareMathOperator{\Tr}{\mathrm{Tr}}
\theoremstyle{plain}                    
\newtheorem{thm}{Theorem}[section]
\newtheorem{thma}{Theorem}
\newtheorem{lem}[thm]{Lemma}
\newtheorem{cor}[thm]{Corollary}
\newtheorem{fact}[thm]{Fact}	
\newtheorem{assumption}[thm]{Assumption}
\theoremstyle{definition}
\newtheorem{defn}[thm]{Definition}
\theoremstyle{remark}
\newtheorem{rmk}[thm]{Remark}
\newtheorem{claim}[thm]{Claim}
\numberwithin{equation}{section}
\newenvironment{proofclaim}
 {\proof}
 {\endproof}
\newcommand{\surface}{\Sigma}
\DeclareMathOperator{\Rep}{Rep}
\newcommand{\RepDT}{\Rep^\mathrm{DT}_{\alpha}(\surface)}
\newcommand{\RepDTarg}[1]{\Rep^\mathrm{DT}_{#1}}
\DeclareMathOperator{\IntRep}{R\overset{\circ}{e}p}
\newcommand{\IntRepDT}[1]{{\smash{\IntRep}\vphantom{\Rep}}_{#1, \alpha}^\mathrm{DT}}
\newcommand{\IntRepDTarg}[2]{{\smash{\IntRep}\vphantom{\Rep}}_{#1, #2}^\mathrm{DT}}
\DeclareMathOperator{\PMod}{PMod}
\newcommand{\mcg}{\PMod(\surface)}
\newcommand{\rational}{\mathcal{Q}}
\title[Orbit closures for Deroin--Tholozan representations]{Mapping class group orbit closures for Deroin--Tholozan representations}
\author{Yohann Bouilly}
\address[Y.~Bouilly]{Current address: Université de Strasbourg, 5/7 rue Ren\'e Descartes, 67084 Strasbourg, France.}
\email{bouilly@math.unistra.fr}
\author{Gianluca Faraco}
\address[G.~Faraco]{Current address: Dipartimento di Matematica e Applicazioni U5, Universita` degli Studi di Milano-Bicocca, Via Cozzi 55, 20125 Milano, Italy.}
\email{gianluca.faraco@unimib.it}
\email{gianluca.faraco.math@gmail.com}
\author{Arnaud Maret}
\address[A.~Maret]{Sorbonne Université and Université Paris Cité, CNRS, IMJ-PRG, F-75005 Paris, France.\newline
Current address: Université de Strasbourg, IRMA, 7 rue Descartes, 67000 Strasbourg, France.}
\email{maret.arnaud@unistra.fr}
\date{\today}
\begin{document}

\begin{abstract} 
We prove that infinite mapping class group orbits are dense in the character variety of Deroin--Tholozan representations. In other words, the action is minimal except for finite orbits. Our arguments rely on the symplectic structure of the character variety, emphasizing this geometric perspective over its algebraic properties.
\end{abstract}

\maketitle

\section{Introduction}

\subsection{Motivations and results}
The deformation space of representations of the fundamental group of an oriented surface $S$ into a group $G$ is known as the \emph{character variety} of $S$ and $G$. When $G$ is an algebraic group, character varieties inherit the structure of algebraic varieties. On the other hand, when $G$ is a Lie group whose Lie algebra supports an orthogonal structure which is invariant by the adjoint action of $G$, character varieties carry a natural (stratified) symplectic structure named after Goldman (Section~\ref{sec:goldamn-symplectic-form}). Sometimes, for instance when $G$ is $\SL_2\C$, both the algebraic and the symplectic structures coexist. 

One way to deform representations of surface groups is to pre-compose $\pi_1 S\to G$ by an automorphism of $\pi_1 S$, giving rise to an action of the \emph{mapping class group of $S$} on the character variety (Section~\ref{sec:DT-mcg-dynamics}). It preserves both the algebraic and the symplectic structures of the character variety. Our goal is to continue the line of research initiated several decades ago to understand orbit closures for the mapping class group action on character varieties. We will consider the case of Deroin--Tholozan representations---in short, \emph{DT representations}---which are special kinds of representations of the fundamental group of a punctured sphere into $\PSL_2\R$ (Section~\ref{sec:dt-representations-origin}). They may be thought of as rank-2 cousins of unitary representations in various ways. For instance, even though DT representations have Zariski dense image in the non-compact Lie group $\PSL_2\R$, their deformation spaces are compact character varieties which we call \emph{DT components}. DT representations are also \emph{totally elliptic} in the sense that they map every \emph{simple} closed curve to an elliptic element of $\PSL_2\R$. Our main result is the following.

\begin{thma}[Theorem~\ref{thm:infinite-orbits-are-dense}]\label{thm-intro:minimal-up-to-finite-orbits}
Infinite mapping class group orbits of conjugacy classes of DT representations are dense in their corresponding DT component. In other words, the mapping class group action on DT components is \emph{minimal up to finite orbits}.
\end{thma}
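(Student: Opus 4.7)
My plan is to exploit the symplectic geometry of the DT component and the classical theory of \emph{twist flows} going back to Goldman. Given a pants decomposition $\mathcal{P} = \{c_1, \dots, c_k\}$ of the underlying punctured sphere (with $k$ equal to half the real dimension of $\RepDT$), total ellipticity of DT representations ensures that the rotation angles $\ell_{c_1}, \dots, \ell_{c_k}$ are well defined on $\RepDT$, and they Poisson-commute with respect to the Goldman symplectic form. This turns the DT component into a completely integrable system whose regular fibers are Lagrangian tori. Goldman's twist flow theorem identifies the Dehn twist $T_{c_i}$ with the time-one map of the Hamiltonian flow of a smooth function of $\ell_{c_i}$, so on each torus fiber the twist acts as a translation whose direction depends only on the fiber.

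The first main step is to show that whenever the sub-orbit $\langle T_{c_1}, \dots, T_{c_k}\rangle \cdot [\rho]$ is infinite for some pants decomposition $\mathcal{P}$, it is dense in the Lagrangian torus $T$ through $[\rho]$. The abelian group action of the $k$ commuting twists on $T$ is by $k$ translations, and by Kronecker's equidistribution theorem the closure of the sub-orbit is a closed subgroup of $T$; this subgroup is either finite, or it is a positive-dimensional subtorus which, by absorbing iterates of the remaining translations, one can argue fills all of $T$. The finite case corresponds to an arithmetic condition on the angles $\ell_{c_i}([\rho])$.

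The second step upgrades torus-density to density in the full component by varying the pants decomposition. Two pants decompositions $\mathcal{P}$ and $\mathcal{P}'$ whose curves have positive geometric intersection yield two integrable systems whose Lagrangian torus foliations are transverse on an open set. Since the $\mcg$-orbit closure is closed and contains an entire $\mathcal{P}$-torus once we have step one, applying Dehn twists from $\mathcal{P}'$ sweeps this torus through transverse directions; combining finitely many decompositions (e.g.\ along the curves used in a Humphries-type generating set for $\mcg$) produces an open subset of $\RepDT$ inside the orbit closure. Closedness of the orbit closure and connectedness of $\RepDT$ then force it to be the whole component.

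The main obstacle I anticipate is the bootstrap between ``infinite full orbit'' and ``infinite sub-orbit along some pants decomposition''. A priori it is conceivable that every $\mathcal{P}$-sub-orbit is finite while the total $\mcg$-orbit is infinite, in which case step one never initiates. To rule this out I would need a careful argument, likely arithmetic in nature: finite sub-orbits along $\mathcal{P}$ force rational relations among $\ell_{c_1}([\rho]), \dots, \ell_{c_k}([\rho])$, and varying $\mathcal{P}$ through the combinatorial action of $\mcg$ on pants decompositions generates new linear relations among rotation angles. One should then show that simultaneous rationality of all these relations for every pants decomposition already pins down $[\rho]$ to lie in a $\mcg$-finite set, so that infinite orbits automatically admit some pants decomposition with infinite sub-orbit. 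Once this is secured, steps one and two combine to yield density.
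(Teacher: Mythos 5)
There is a genuine gap, and it sits at the heart of Step~1. On a regular Lagrangian torus $T$ of the moment map, each Dehn twist $\tau_{b_i}$ along a pants curve translates \emph{only} the $i$-th angle coordinate $\gamma_i$, by the amount $\beta_i/2$, which is constant on $T$ (this is exactly the relation $\gamma_i\mapsto\gamma_i+\beta_i$ of the action--angle picture). The group generated by the $n-3$ commuting twists therefore acts as a product of independent circle rotations, and its orbit closure is the product $\prod_i\overline{\{k\beta_i/2\ :\ k\in\Z\}}$ inside $\prod_i\R/\pi\Z$. If $\beta_1([\rho])$ is an irrational multiple of $\pi$ while the other $\beta_i([\rho])$ are rational multiples, the sub-orbit is infinite yet dense only in a proper subtorus: there is no ``absorbing of iterates,'' because the translation directions never mix. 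So an infinite sub-orbit along a single pants decomposition does \emph{not} yield density in the Lagrangian torus, and Step~1 fails as stated. This is precisely why the paper leaves the purely toric picture and introduces curves $d_i$, $e_i$ transverse to the pants curves: linear independence of flow directions is obtained not from Kronecker on one torus but from showing that the Poisson brackets $\{\beta_i,\delta_i\}$ (and $\{\delta_i,\delta_{i\pm1}\}$) are nonzero at a well-chosen orbit point, and irrationality is then propagated from one curve to the next along Hamiltonian flow lines.

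The remaining steps are also not carried out, and the proposed fixes would not come for free. To sweep a torus by twists in a second pants decomposition you need the corresponding angle functions to take irrational multiples of $\pi$ at the points you reach; the proposal never addresses how to guarantee this. The paper does so by combining Selberg's Lemma (only finitely many rational rotation angles occur in the image of a fixed DT representation) with the finite-to-one parametrization of the component by pairs of angle functions, so that at most finitely many orbit points can be ``all rational.'' Your anticipated ``bootstrap'' obstacle is real, but the resolution you sketch (rational relations for every pants decomposition pinning $[\rho]$ to a finite set) is not established and is not how the difficulty is actually overcome. Finally, a substantial part of the proof consists of steering an infinite orbit into the regular fibers and away from the loci where transversality degenerates ($\gamma_i\in\{0,\pi\}$, $\beta_i=\pi$), which requires an induction on the number of punctures with the $4$-punctured sphere as base case; none of this appears in the proposal. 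The structures you identify (angle functions, twist flows, irrationality) are the right ones, but the argument as written does not close.
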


Goldman raised the question of finding necessary and sufficient conditions on surface group representations to ensure that their mapping class group orbit is dense in the corresponding (relative) character variety~\cite[Problem~2.7]{goldman-conjecture}. The combination of Theorem~\ref{thm-intro:minimal-up-to-finite-orbits} with the classification of finite orbits of DT representations from~\cite{arnaud-sam} fully answers Goldman's question over DT components. Finite mapping class group orbits, nevertheless, remain a rare phenomenon that occurs for a small number of punctures on the underlying sphere and only for specific peripheral angles (see eg.~\cite[Section~7]{arnaud-sam}). For instance, Bronstein--Maret proved in~\cite{arnaud-sam} that if the underlying sphere has seven punctures or more, then every orbit is infinite. In that case, Theorem~\ref{thm-intro:minimal-up-to-finite-orbits} implies that the mapping class group action is \emph{minimal}---every orbit is dense. 

Theorem~\ref{thm-intro:minimal-up-to-finite-orbits} was already established in the context of 4-punctured spheres by Previte--Xia~\cite[Theorem~3.3]{previte-xia-minimality} where the authors cover both the case of DT representations and representations into $\SU(2)$. Since a complete classification of finite mapping class group orbits was not available at the time, Previte--Xia only concluded minimality for certain rational peripheral monodromy parameters (see~\cite[Theorem~1.2]{previte-xia-minimality}). A variation of their argument can be found in the work of Cantat--Loray~\cite[proof of Theorem~C p.2962]{cantat-loray}. Cantat--Loray actually proved a more precise result: a bounded and infinite mapping class group orbit in a relative $\SL_2\C$ character variety of a $4$-punctured sphere can only be made of real representations (into $\SL_2\R$ or $\SU(2)$) and it is dense in the unique compact component of the real points of the character variety~\cite[Theorem~C]{cantat-loray}.

The results of Previte--Xia and Cantat--Loray treat relative $\SL_2\C$ character varieties as affine algebraic varieties given explicitly by a family of cubic surfaces in $\C^3$. We propose a different approach to prove Theorem~\ref{thm-intro:minimal-up-to-finite-orbits} that purely relies on the symplectic structure of DT components. Instead of parametrizing conjugacy classes of representations using algebraic coordinates given by trace functions (Section~\ref{sec:transervality-n=4}), our strategy is to use the action-angle coordinates on DT components developed in~\cite{action-angle} (Section~\ref{sec:parametrization-by-triangle-chains}). By doing so, we are able to precisely characterize when the Poisson bracket of two angle functions (Section~\ref{sec:DT-ham-dynamics}) vanish in terms of angle coordinates (Lemmas~\ref{lem:poisson-bracket-vanish-iff-gamma-takes-some-values} and~\ref{lem:poisson-bracket-vanish-iff-gamma-takes-some-values-general-case}). Somewhat surprisingly, our computations relate the zero locus of specific Poisson brackets with a real Lagrangian submanifold of DT components. For instance, in the context of 4-punctured spheres, if $b$ and $d$ are two simple closed curves in a torso configuration (see Figure~\ref{fig:fourpunctsphere}), and $\beta$ and $\delta$ are the associated angle functions, then the equation $\{\beta,\delta\}=0$ defines a Lagrangian submanifold corresponding to $\mathbb{RP}^1\subset \CP^1$ under the symplectomorphism of~\cite{action-angle} between DT components of 4-punctured spheres and $\CP^1$ (Remark~\ref{rem:correspondance-RP1-zero-locus-Poisson-bracket}).

\begin{figure}[!ht]
    \centering
    \begin{tikzpicture}[scale=.65]
  \draw[apricot] (0,2.4) arc(90:270:.75 and 2.4) node[near start, above left]{$b$};
  \draw[lightapricot] (0,2.4) arc(90:270:-.75 and 2.4);
  \draw[lightplum] (-2,0) arc(180:0:2 and .5);
  \draw[plum] (-2,0) arc(180:0:2 and -.5) node[near start, below]{$d$};
  
  \draw (-2,3) to[out=-30,in=210] (2,3);
  \draw (3,2) to[out=210,in=150] (3,-2);
  \draw (2,-3) to[out=150,in=30] (-2,-3);
  \draw (-3,-2) to[out=30,in=-30] (-3,2);

  \draw (-3,2) to[out=-30,in=-40] (-2,3);
  \draw (-3,2) to[out=140,in=140] (-2,3);
  \draw (2,3) to[out=30,in=30] (3,2);
  \draw (2,3) to[out=210,in=210] (3,2);
  \draw (3,-2) to[out=-30,in=-30] (2,-3);
  \draw (3,-2) to[out=140,in=140] (2,-3);
  \draw (-2,-3) to[out=30,in=30] (-3,-2);
  \draw (-2,-3) to[out=210,in=210] (-3,-2);  
\end{tikzpicture}
\hspace{1cm}
\begin{tikzpicture}[scale=.9]
  \shade[ball color = mauve, opacity=.2] (0,0) circle (2cm);
  \draw[thick, sky] (0,-2) arc (270:90:1 and 2);
  \draw[dashed, thick, sky] (0,2) arc (90:270:-1 and 2);
  \fill[apricot] (0,2) circle (0.07) node[above]{\tiny $d\beta=0$};
  \fill[apricot] (0,-2) circle (0.07) node[below]{\tiny $d\beta=0$};

  \draw[sky, anchor=west] (2,2) node{$\{\beta,\delta\}=0$};
   \draw[sky, ->] (2,2) to[out=180, in=40] (1.2,.5);
\end{tikzpicture}
    \caption{On the left: a $4$-punctured sphere with two curves $b$ and $d$ in torso configuration. On the right: a DT component (which is symplectically a sphere) and the Lagrangian submanifold cut out by $\{\beta,\delta\}=0$.}
    \label{fig:fourpunctsphere}
\end{figure}
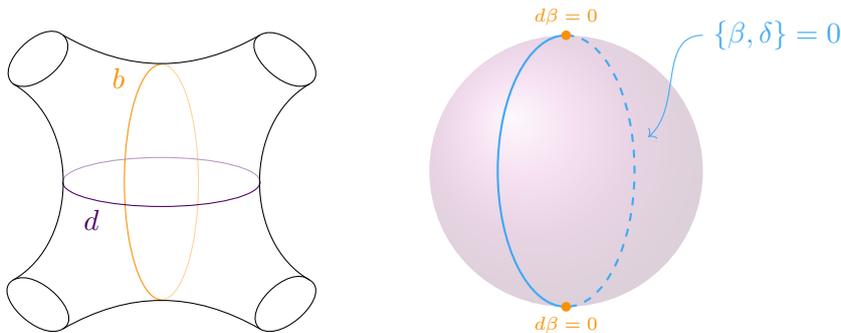

This result is used to identify a family of simple closed curves for which the differentials of the associated angle functions generate the cotangent space to the DT component at most points (Corollaries~\ref{cor:cotangent-space-generated-by-beta-delta-epsilon-n=4} and~\ref{cor:cotangent-space-generated-by-beta_i-delta_i-epsilon_i}).

\subsection{Related results}
Historically, mapping class group orbit closures of infinite orbits were first understood for representations into $\SU(2)$ and for surfaces of positive genus. The first milestone is due to Previte--Xia whose results claim that the mapping class group orbit of the conjugacy class of a representation with dense image in $\SU(2)$ is dense in the corresponding (relative) character variety~\cite{previte-xia-1, previte-xia-2}. Recently, Golsefidy--Tamam have identified inaccuracies in Previte--Xia's work when the genus of the surface is~1 or~2, and proposed a revised statement~\cite[Corollary~93]{golsefidy-tamam}. They also construct examples of representations with dense image in $\SU(2)$ for surfaces of genus 1 with 2 punctures and of genus 2 with 1 puncture, but whose corresponding mapping class group orbit is not dense, even though infinite~\cite[Section~8]{golsefidy-tamam}.

The complete study of orbit closures for representations of a genus-0 surface group into $\SU(2)$ was achieved by Golsefidy--Tamam in the same recent paper. They showed that if the mapping class group orbit of the conjugacy class of a representation with dense image in $\SU(2)$ is infinite, then it is dense in the associated relative character variety~\cite[Theorem~J]{golsefidy-tamam}.\footnote{It is worth mentioning that Golsefidy--Tamam also prove an analogous statement for character varieties of representations into $\SL_2\Z_p$, where $\Z_p$ is the ring of $p$-adic integers. The $p$-adic version of the theorem is somewhat weaker than its $\SU(2)$ counterpart as they only prove that the closure of an infinite orbit has nonempty interior~\cite[Theorem~K]{golsefidy-tamam}.} Their result generalizes the work of Previte--Xia on 4-punctured spheres~\cite{previte-xia-minimality} to spheres with an arbitrary number of punctures. Theorem~\ref{thm-intro:minimal-up-to-finite-orbits} is the analogue of Golsefidy--Tamam's result for DT representations instead of $\SU(2)$ representations.\footnote{Because DT representations only exist for punctured spheres, there is no analogue in positive genus.} Unlike in the positive genus case, it is important to assume the mapping class group orbit to be infinite because there exist examples of representations of the fundamental group of a 4-punctured sphere into $\SU(2)$ that have dense image but whose associated mapping class group orbit is finite.\footnote{They are the so-called \emph{Klein solution} and the \emph{237 elliptic solutions} discovered by Boalch~\cite{boalch-klein, boalch-237} and Kitaev~\cite{kitaev-237}, see also~\cite[Corollary~7.2 and the discussion before]{arnaud-sam}.} 

To completely understand mapping class group orbit closures in character varieties, it is necessary to identify all finite orbits. Substantial work has been produced on the topic by different authors over the past decades, so that we now have a complete understanding of finite mapping class group orbits of surface group representations into $\SL_2\C$. We refer the reader to~\cite[Section~1.3]{arnaud-sam} for a historical account. As for orbit closures, understanding finite orbits in the case of punctured spheres turns out to be the hardest nut to crack and also provides the richest zoology of finite orbits.

Moving away from character varieties of representations into $\SL_2\C$, Bouilly--Faraco answered Goldman's question on orbit closures~\cite[Problem~2.7]{goldman-conjecture} for representations of closed surface groups into compact abelian Lie groups. Note that when the target group is abelian, the character variety coincides with the space of representations (the conjugation action is trivial). They proved that the mapping class group orbit of a representation is dense if and only if the representation has dense image~\cite[Theorem~A]{yohann-gianluca}.

The study of minimality (or minimality up to finite orbits) is related to a weaker notion called \emph{almost minimality} which happens when almost every orbit is dense. Here, ``almost every orbit'' refers to the Liouville measure associated to the Goldman symplectic form on character varieties (Section~\ref{sec:goldamn-symplectic-form})---the so-called \emph{Goldman measure}. Almost minimality is a consequence of ergodicity. The mapping class group action on (relative) character varieties is known to be ergodic for many surface groups when the target Lie group is compact. This was first proved by Goldman for representations of any surface group into $\SU(2)$~\cite{goldman-ergodic}. Goldman’s results were later generalized by Pickrell and Xia to representations into any compact Lie group. Their generalization holds for surfaces of genus at least two with an arbitrary number of puntures and all peripheral monodromies, and for almost all peripheral monodromies in the case of a 1-punctured torus~\cite{pickrell-xia-1, pickrell-xia-2}. More recently, Goldman--Lawton--Xia~\cite{goldman-lawton-xia} established ergodicity for representations of the fundamental group of a 1-punctured torus into $\SU(3)$ for all peripheral monodromies---a particular case left open in Pickrell--Xia's paper. Ergodicity of the mapping class group action on DT components was established in~\cite{maret-ergodicity} for spheres with an arbitrary number of punctures, extending results of Goldman about the 4-punctured sphere~\cite{goldman-torus}.

There are plenty of other adventures to undertake around the topic of mapping class group dynamics on character varieties. Here are two examples of unknown territories to explore. Goldman's Conjecture~\cite[Conjecture~3.1]{goldman-conjecture} on the ergodicity of the mapping class group action on intermediate components of the $\PSL_2\R$ character variety of closed surface groups has only been proven in genus 2 by Marché--Wolff~\cite{marche-wolff} and remains open in higher genuses. Beyond ergodicty, there is the question of \emph{unique ergodicity}. It follows from the recent work of Cantat--Dupont--Martin-Baillon that an invariant ergodic measure for the mapping class group action on the relative $\SL_2\C$ character varieties of representations of the fundamental group of a 4-punctured sphere is either the Goldman measure or is supported on a finite orbit~\cite{stationary-measures}.\footnote{Their result is more general: the statement holds for \emph{stationary} ergodic measures, not only invariant ergodic measures.}  It would be interesting to determine whether a similar classification of invariant measures holds for character varieties on which we have a good understanding of orbit closures, such as DT components.

\subsection{Some ideas about the proof}
To prove Theorem~\ref{thm-intro:minimal-up-to-finite-orbits}, we need to prove that the closure of an infinite mapping class group orbit is the whole DT component. Since the Goldman measure is strictly positive, a closed set is the whole DT component if and only if it has full measure. By the ergodicity result of~\cite{maret-ergodicity} (Theorem~\ref{thm:ergodicity}), it is therefore enough to show that the closure of an infinite orbit always has nonempty interior to prove Theorem~\ref{thm-intro:minimal-up-to-finite-orbits}.
Constructing the desired open set is the heart of the argument and requires both an \emph{irrationality} and a \emph{transversality} result.

The irrationality statement will follow from Selberg's Lemma (Theorem~\ref{thm:selberg-lemma}). It implies that only finitely many rational multiples of $\pi$ can arise as rotation angles of elliptic elements in the image of a DT representation (Corollary~\ref{cor:finitely-many-rational-angles}). Involving Selberg's Lemma in this context is an idea that we borrowed from Cantat--Loray~\cite{cantat-loray}.

A \emph{transversality result} is a statement about generating the cotangent space at all (or at least at most) points in a DT component using differentials of angle functions (Section~\ref{sec:DT-ham-dynamics}) associated to an explicit family of simple closed curves on the underlying sphere. When transversality is achieved at a particular point, the Hamiltonian flows of the angle functions locally parametrize an open neighborhood of the point (Section~\ref{sec:local-parameterization}) (this explains the name ``transversality''). Being able to generate the cotangent space to a general character variety using trace functions associated to finitely many fundamental group elements is a consequence of Procesi's work~\cite{procesi}. The fact that those fundamental group elements can be taken to be \emph{simple} closed curves when the target Lie group is made of rank-2 matrices was observed by Goldman--Xia~\cite[Theorem~2.1]{goldman-xia}. Transversality results are usually obtained in an algebraic manner: first, identify sufficiently many trace functions to generate the coordinate ring of the character variety, then deduce that their differentials generate the cotangent space at every point. We propose a purely symplectic approach instead where we establish linear independence between differentials of angle functions by showing that some Poisson brackets do not vanish (Lemmas~\ref{lem:poisson-bracket-vanish-iff-gamma-takes-some-values} and~\ref{lem:poisson-bracket-vanish-iff-gamma-takes-some-values-general-case}).

More precisely, we will identify a \emph{minimal} set of simple closed curves (where minimal means that the number of curves is equal to the dimension of the DT component) whose corresponding angle functions have the following properties:
\begin{itemize}
    \item Their differentials generate the cotangent space to the DT component at every point in an open dense subset (Corollaries~\ref{cor:cotangent-space-generated-by-beta-delta-epsilon-n=4} and~\ref{cor:cotangent-space-generated-by-beta_i-delta_i-epsilon_i}, as well as Remark~\ref{rem:cotangent-space-generated-by-beta_i-delta_i-epsilon_i}).
    \item They essentially parametrize the DT component in the sense that prescribing a value for each angle function determines a finite set of points in the DT component (Corollaries~\ref{cor:finite-intersection-b-orbits-and-d-orbits-n=4} and~\ref{cor:finite-intersection-b-orbits-and-d-orbits}).
    \item The curves can be arranged as the vertices of a connected graph such that whenever two curves share an edge, the Poisson bracket of the corresponding angle functions almost never vanishes (Claims~\ref{claim:Poisson-bracket-d_i-d_i+1} and~\ref{claim:Poisson-bracket-d_i-d_i-1}).\footnote{This property is not a consequence of the first two in general. Consider, for instance, $\R^{2n}$ equipped with the standard symplectic form $\sum_i dx_i\wedge dy_i$. The coordinate functions $x_1,\ldots,x_n,y_1,\ldots,y_n$ satisfy the first two properties, but not the last one because the Poisson brackets $\{x_1,x_i\}$ and $\{x_1,y_i\}$ are identically zero whenever $i\geq 2$.}
\end{itemize}

The most challenging argument in the proof of Theorem~\ref{thm-intro:minimal-up-to-finite-orbits} consists in showing that an infinite orbit always contains at least one point---our \emph{preferred} point---at which transversality is achieved for the set of simple closed curves identified previously (Lemmas~\ref{lem:infinite-orbits-intersect-regular-fibers-at-infinitely-many-points}, \ref{lem:getting-rid-of-beta=pi}, and~\ref{lem:getting-rid-of-gamma_i=0-or-pi}) and which fulfills the expected irrationality condition (Section~\ref{sec:choosing-a-point}). Finding such a point is not immediate because, for instance, transversality fails to hold at more than finitely many points of the DT component. Once we have identified our preferred point, it automatically comes with an open neighborhood foliated by Hamiltonian flow lines by transversality. The irrationality property says that the value of at least one angle function at the point is an irrational multiple of $\pi$. Arguing using the close connection between Dehn twists and Hamiltonian flows of angle functions (Section~\ref{sec:DT-ham-dynamics}), we shall eventually conclude that each of the Hamiltonian flow lines foliating the open neighborhood of our preferred point is contained in the orbit closure, effectively giving us the open set we were looking for (Section~\ref{sec:open-set-U}).

In order to facilitate the arguments of Section~\ref{sec:proof}, we opt for a proof of Theorem~\ref{thm-intro:minimal-up-to-finite-orbits} by induction on the number of punctures on the underlying sphere. Previte--Xia's result from~\cite{previte-xia-minimality} for 4-punctured spheres (Theorem~\ref{thm:infinite-orbits-are-dense-n=4}) will serve as the base case. The parametrization of DT representations by triangle chains developed in~\cite{action-angle} clarifies the inductive scheme on the sphere's topology, as it aligns naturally with the ``gluing'' of representations. A similar inductive approach was also employed in~\cite{arnaud-sam}.

\subsection{Organization of the paper}
We start by providing a brief review of DT representations in Section~\ref{sec:background-DT-representations}, insisting both on the triangle chains parametrization and the subsequent action-angle coordinates (Section~\ref{sec:parametrization-by-triangle-chains}), as well as the Hamiltonian toric structure of DT components (Sections~\ref{sec:DT-ham-dynamics} and~\ref{sec:moment-map}).

Section~\ref{sec:transervality} contains all the transversality statements that we shall need to prove Theorem~\ref{thm-intro:minimal-up-to-finite-orbits} (Corollaries~\ref{cor:cotangent-space-generated-by-beta-delta-epsilon-n=4} and~\ref{cor:cotangent-space-generated-by-beta_i-delta_i-epsilon_i}). The main result characterizes the occurrences when the Poisson bracket of two angle functions vanishes (Lemmas~\ref{lem:poisson-bracket-vanish-iff-gamma-takes-some-values} and~\ref{lem:poisson-bracket-vanish-iff-gamma-takes-some-values-general-case}). We start with the case of the 4-punctured sphere (Section~\ref{sec:transervality-n=4}), before proceeding with the general case (Section~\ref{sec:general-case}).

We continue with some preliminaries in Section~\ref{sec:preliminaries}, including the digression on Selberg's Lemma (Section~\ref{sec:selberg}). We end Section~\ref{sec:preliminaries} by giving a proof of Theorem~\ref{thm-intro:minimal-up-to-finite-orbits} for 4-punctured spheres which an adaptation of Cantat--Loray's argument from~\cite{cantat-loray}. It is required as a base case of our inductive proof of Theorem~\ref{thm-intro:minimal-up-to-finite-orbits}, and we believe that it will help the reader to understand better the proof of the general case provided in Section~\ref{sec:proof}.

\smallskip

\textit{Notation:} The more lengthy proofs are organized into a sequence of claims, each accompanied by its own proof. To facilitate the reading, we use the diamond symbol~$\lozenge$ to indicate the end of an intermediate claim's proof, and keep the usual square symbol~$\square$ to mark the conclusion of the main statement's proof.

\smallskip

\subsection{Acknowledgments} We express our gratitude to Bill Goldman and Nicolas Tholozan for suggesting the question of minimality in the context of DT representations. This work also benefited from inspired conversations with Samuel Bronstein. Thanks to the anonymous referee for their careful read and the many meaningful suggestions. GF is a member of GNSAGA and expresses gratitude for their support. AM was partially supported by the European Research Council (ERC) under the European Union’s Horizon 2020 research and innovation program (Grant agreement No.~101096550).

\section{Background on DT representations}\label{sec:background-DT-representations}

\subsection{Overview}
We briefly introduce some aspects of DT representations that are relevant for this note. Similar recaps on DT representations can be found in~\cite{aaron-arnaud, arnaud-sam} which we partially duplicate here for the sake of self-containment. Besides recalling their fundamental properties (Sections~\ref{sec:dt-representations-origin} and~\ref{sec:goldamn-symplectic-form}) and their parametrization by triangle chains (Section~\ref{sec:parametrization-by-triangle-chains}), we insist on some dynamical aspects, covering both the mapping class group action (Section~\ref{sec:DT-mcg-dynamics}) and the toric structure (Sections~\ref{sec:DT-ham-dynamics} and~\ref{sec:moment-map}).

\subsection{Origin and main properties}\label{sec:dt-representations-origin}
For every integer $n\geq 3$, we fix an oriented topological sphere~$\surface$ along with a set $\mathcal{P}$ of $n$ punctures on $\surface$. The fundamental group $\pi_1\surface$ can be presented as
\begin{equation}\label{eq:geometric-presentation}
\pi_1\surface =\big\langle\, c_1,\dots,c_n\, |\, c_1\cdots c_n =1\,\big\rangle,
\end{equation}
by carefully choosing each $c_i$ as the homotopy class of a counterclockwise loop around a puncture of $\Sigma$---a \emph{peripheral loop}. Such a presentation of $\pi_1\surface$ is called \emph{geometric}. For an angle vector $\alpha\in(0,2\pi)^\mathcal{P}$, we introduce the \emph{$\alpha$-relative character variety} 
\[
\Rep_{\alpha}\big(\surface,\psl\big)
\]
as the space of conjugacy classes of representations $\rho\colon\pi_1\surface\rightarrow \psl$ which map peripheral loops around the punctures $p\in\mathcal{P}$ to elliptic elements of $\psl$ of rotation angles $\alpha_p$. The conjugacy class of a representation $\rho$ will be denoted by $[\rho]$. All $\alpha$-relative character varieties are smooth manifolds of dimension $2(n-3)$, sometimes with an isolated point, which happens exactly when $\sum_{p\in\mathcal{P}}\alpha_p$ is an integer multiple of $2\pi$. We shall adopt the following terminology

\begin{defn}\label{defn:rotangle}
    An elliptic element of $\psl$ has \emph{rotation angle} $\vartheta\in (0,2\pi)$ if it is conjugate to a matrix of the form
\[
\pm\begin{pmatrix}
    \cos(\vartheta/2) & \sin(\vartheta/2)\\
    -\sin(\vartheta/2) & \cos(\vartheta/2)
\end{pmatrix}.
\]
\end{defn}

Whenever the angle vector $\alpha$ satisfies
\begin{equation}\label{eq:angle-condition}
\sum_{p\in\mathcal P}\alpha_p> 2\pi(n-1),
\end{equation}
Deroin--Tholozan proved that $\Rep_{\alpha}(\surface_n,\,\psl)$ contains a smooth compact component diffeomorphic to $\CP^{n-3}$~\cite{deroin-tholozan}. An analogous compact component exists when instead $\sum_{p\in\mathcal P}\alpha_p< 2\pi$; the two are images of each other by the non-trivial outer automorphism of $\psl$. These compact components had already been identified by Benedetto--Goldman when $\Sigma$ is a 4-punctures spheres~\cite{benedetto-goldman}. According to Mondello, the compact components discovered by Deroin--Tholozan---which we shall call \emph{DT components}---are the unique compact components inside their respective $\alpha$-relative character varieties~\cite[Corollary~4.17]{mondello}. We will denote DT components by
\[
\RepDT\subset \Rep_{\alpha}(\surface,\psl)
\]
and refer to the representations whose conjugacy class lies in $\RepDT$ as \emph{DT representations}. 

DT representations have the fundamental property of being \emph{totally elliptic}, meaning that every \emph{simple} closed curve is mapped to an elliptic element inside $\psl$, see \cite[Lemma~3.2]{deroin-tholozan}. Their image is always a Zariski dense subgroup of $\psl$ which is rarely discrete and therefore often dense. Nevertheless, each DT representation has a geometric interpretation as the holonomy of a certain kind of hyperbolic cone metric on $\Sigma$ as explained in~\cite[Section~4]{deroin-tholozan} and further detailed by Fenyes--Maret in~\cite{aaron-arnaud}.

\subsection{Symplectic and Poisson structures}\label{sec:goldamn-symplectic-form}
The diffeomorphisms of~\cite{deroin-tholozan} between DT components and complex projective spaces are actually isomorphisms of symplectic manifolds. The symplectic structure on $\RepDT$ is given by the so-called Goldman symplectic form~$\omega_{\mathcal{G}}$~\cite{goldman-symplectic}. The symplectic form on $\CP^{n-3}$ is the Fubini--Study symplectic form with total volume equal to
\begin{equation}\label{eq:volume-and-lambda}
\frac{(\lambda\,\pi)^{n-3}}{(n-3)!},\,\, \text{where}\,\, \lambda=\sum_{p\in\mathcal{P}}\alpha_p-2\pi(n-1).
\end{equation}
Note that $\lambda$ is a positive constant by~\eqref{eq:angle-condition}. 

Associated to the Goldman symplectic form is the Poisson bracket 
\[
\{-,-\}\colon C^\infty\big(\RepDT\big)\times C^\infty\big(\RepDT\big)\to C^\infty\big(\RepDT\big)
\]
defined by $\{f,g\}=\omega_\mathcal{G}(X_f,X_g)$ for any pair of smooth functions $f$, $g$, and where $X_f$ and $X_g$ denote the Hamiltonian vector fields of $f$ and $g$. Equivalently, $\{f,g\}=df(X_g)=-dg(X_f)$. Goldman famously computed the Poisson brackets of certain kinds of smooth functions on character varieties constructed from conjugacy invariant functions of the target Lie group~\cite{goldman-invariant}. In Section~\ref{sec:transervality}, we will conduct independent computations to characterize the points inside DT components at which the Poisson bracket of two angle functions vanishes.

\subsection{Mapping class group dynamics}\label{sec:DT-mcg-dynamics}
The \emph{pure mapping class group} of $\surface$ is the group of isotopy classes of orientation-preserving homeomorphisms of $\surface$ that fix each puncture individually. We will denote it by $\PMod(\surface)$. It is naturally isomorphic to a subgroup of the group of outer automorphisms of $\pi_1\surface$ by the Dehn--Nielsen--Baer Theorem, as explained in~\cite[Theorem~8.8]{mcg-primer}. It therefore naturally acts on any relative character variety $\Rep_{\alpha}(\surface,\psl)$ by pre-composition and preserves each DT component. The mapping class group action also preserves the Goldman symplectic form and the associated probability Liouville measure~$\nu_{\mathcal{G}}$---the \emph{Goldman measure}. It turns out that the pure mapping class group of $\Sigma$ acts ergodically on every DT component. For 4-punctured spheres, ergodicity follows from Goldman's work~\cite{goldman-torus}; the general case was proven in~\cite{maret-ergodicity}.
\begin{thm}\label{thm:ergodicity}
The action of $\PMod(\surface)$ on $\RepDT$ is ergodic with respect to the Goldman measure for every angle vector $\alpha$ satisfying the angle condition~\eqref{eq:angle-condition}.
\end{thm}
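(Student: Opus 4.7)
The plan is to prove that every $\PMod(\surface)$-invariant element $f \in L^2(\RepDT, \nu_{\mathcal{G}})$ is essentially constant. My approach would combine the toric structure of DT components recalled in Sections~\ref{sec:DT-ham-dynamics} and~\ref{sec:moment-map} with the classical \emph{Dehn twist principle}: the Dehn twist $T_c$ along a simple closed curve $c$ acts on the character variety as the time-1 map of the Hamiltonian flow of the angle function $\theta_c$ (up to a normalization of the symplectic form). This immediately translates a problem about the mapping class group dynamics into one about Hamiltonian $\R$-actions.

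The first step is to fix a pants decomposition $c_1, \ldots, c_{n-3}$ of $\surface$ and its associated angle functions $\theta_1, \ldots, \theta_{n-3}$. Their Hamiltonian vector fields commute and integrate to an effective Hamiltonian $T^{n-3}$-action on a dense open subset of $\RepDT$, whose generic orbits are Lagrangian tori. By the Dehn twist principle, the Dehn twists $T_{c_1}, \ldots, T_{c_{n-3}}$ act on each such torus fiber as a translation by the vector $(\theta_1([\rho]), \ldots, \theta_{n-3}([\rho]))$. For $\nu_{\mathcal{G}}$-almost every $[\rho]$, the components of this vector are rationally independent modulo $2\pi$ (the exceptional set is a countable union of real-analytic proper subsets, hence of Goldman-measure zero), so the resulting orbit equidistributes on its fiber. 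A Fubini argument applied to the moment map then implies that $f$ is essentially constant along almost every $T^{n-3}$-fiber.

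The second step upgrades ``essentially constant on fibers'' to ``essentially constant''. I would pick a second pants decomposition $c_1', \ldots, c_{n-3}'$ that intersects the first one essentially (for instance, by applying a single Dehn twist along a curve crossing one of the $c_i$). Its angle functions $\theta_1', \ldots, \theta_{n-3}'$ produce a second Lagrangian torus fibration of $\RepDT$. Since $\RepDT$ has symplectic dimension $2(n-3)$ and the two fibrations are each Lagrangian, the transversality locus---where the differentials of $\theta_1, \ldots, \theta_{n-3}, \theta_1', \ldots, \theta_{n-3}'$ span the cotangent space---is Zariski open, and one verifies it is nonempty by direct computation on a single point (or by invoking a transversality result in the spirit of Corollaries~\ref{cor:cotangent-space-generated-by-beta-delta-epsilon-n=4} and~\ref{cor:cotangent-space-generated-by-beta_i-delta_i-epsilon_i}). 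Repeating the first step with the primed decomposition, the orbits of the two torus actions, combined, are dense on a set of full measure, and a standard measurable-Fubini argument forces $f$ to be constant almost everywhere.

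The hard part will be verifying that the two pants decompositions can indeed be chosen so that the combined moment map is a submersion on a set of full Goldman measure: the Poisson brackets $\{\theta_i, \theta_j'\}$ must be non-identically zero for at least one mixed pair, and the locus where a particular bracket vanishes must be shown to have positive codimension. For the base case $n=4$, the argument simplifies dramatically: $\RepDT \cong \CP^1$ is symplectically a 2-sphere, and the Dehn twists along two curves $b, d$ in torso configuration (Figure~\ref{fig:fourpunctsphere}) act as rotations about two distinct axes, generating a dense subgroup of the isometry group of the sphere for generic angle data; this is Goldman's original computation from~\cite{goldman-torus}. The general case requires Lemmas~\ref{lem:poisson-bracket-vanish-iff-gamma-takes-some-values} and~\ref{lem:poisson-bracket-vanish-iff-gamma-takes-some-values-general-case} to ensure that the relevant Poisson brackets vanish only on codimension-one subsets, controlling exactly where transversality could fail.
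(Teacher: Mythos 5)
The paper does not prove Theorem~\ref{thm:ergodicity} at all: it is imported as a black box from~\cite{maret-ergodicity}, which itself extends Goldman's computation~\cite{goldman-torus} for the $4$-punctured sphere. So there is nothing in the present text to compare your argument against; what you have written is a compressed reconstruction of the external proof.

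That said, your sketch is the right strategy, and it matches (at the level of ideas) the Goldman/Pickrell--Xia/Maret scheme of Lagrangian torus fibrations plus Dehn-twist equidistribution plus a Hopf-type transversality step. A few caveats are worth flagging. First, the Dehn twist is not a ``time-$1$'' map: by~\eqref{eq:goldman-trick} it is $\tau_a=\Phi_a^{\vartheta_a/2}$, a time-$\vartheta_a/2$ map of a $\pi$-periodic flow, so it is a \emph{twist} (the rotation speed depends on the fiber), not the time-$1$ map of any fixed Hamiltonian. This also corrects your description of the $n=4$ base case: $\tau_b$ preserves each $b$-circle and rotates it by its own $\vartheta_b/2$; it does \emph{not} act as a single rotation of $S^2$ about a fixed axis, so the picture of ``two rotations generating a dense subgroup of $\mathrm{SO}(3)$'' is not quite what Goldman does. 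The equidistribution argument you actually need is fiberwise, exactly as you set it up in the general case. Second, rational independence of the angle vector is stronger than necessary: since the $\tau_{c_i}$ commute and can be iterated independently, density on a given torus fiber only requires each $\vartheta_{c_i}\notin\pi\Q$; either condition fails only on a null set, so this is harmless. Third, and most substantively, the phrase ``standard measurable-Fubini argument'' glosses over the real technical work. Knowing that an invariant $L^2$-function is measurable with respect to two Lagrangian fibrations that are transverse on a full-measure set does not by itself say the function is constant; one needs a genuine Hopf/disintegration argument showing that, after pushing along both foliations, the equivalence classes saturate a conull set (equivalently, that the combined map $(\mu,\mu')$ is an a.e.\ local diffeomorphism onto its image, and then running a density-point argument). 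This is exactly where~\cite{maret-ergodicity} has to work; it cannot be waved away. Finally, some care is also needed on the boundary of the moment polytope (the irregular fibers, a null set but one that must be discarded carefully). With those caveats, your outline is a faithful reconstruction of the cited proof rather than a genuinely different route.
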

A consequence of ergodicity is that almost every mapping class group orbit is dense. This property is sometimes called \emph{almost minimality}. The purpose of this paper is to go one step further and prove that every infinite orbit is dense.
\begin{rmk}
Interestingly, the mapping class group action on the non-compact components of $\Rep_{\alpha}(\surface,\psl)$ when $\surface$ is a 4-punctured sphere is partially properly discontinuous, but can have some ergodic regions too, as related by Palesi in~\cite{palesi}.
\end{rmk}

\subsection{Hamiltonian dynamics}\label{sec:DT-ham-dynamics}
There is another interesting action on $\RepDT$ coming from Hamiltonian dynamics. To any oriented simple closed curve $a$ on $\surface$ corresponds a smooth function 
\begin{equation}\label{eq:angle-function}
\vartheta_a\colon \RepDT\to (0,2\pi)
\end{equation}
called the \emph{angle function} of $a$. It is defined as follows: by total ellipticity of DT representations (Section~\ref{sec:dt-representations-origin}), the image of $a$ under any DT representation $\rho$ is elliptic. We define $\vartheta_a([\rho])$ to be the rotation angle of $\rho(a)$ (Definition \ref{defn:rotangle}). The Hamiltonian vector field of $\vartheta_a$ will be denoted by $X_a$. It is defined by the relation $\omega_\mathcal{G}(X_a,-)=d\vartheta_a$. The Hamiltonian flow of $\vartheta_a$ will in turn be denoted by 
\[
\Phi_a\colon \R\times \RepDT\to\RepDT.
\]
Deroin--Tholozan proved that $\Phi_a^t=\Phi_a(t,-)$ is $\pi$-periodic in the variable $t$~\cite[Proposition~3.3]{deroin-tholozan}. The orbits of $\Phi_a$ are therefore either singular points of $\vartheta_a$, or embedded circles of length $\pi$. In the sequel, we shall refer to these orbits as $a$-\textit{orbits}.

For any pair of disjoint simple closed curves, say $a_1$ and $a_2$, the Hamiltonian flows $\Phi_{a_1}$ and $\Phi_{a_2}$ just defined commute by a general result of Goldman~\cite{goldman-invariant}. As a consequence, given a pants decomposition of $\surface$---that means a collection of $n-3$ free homotopy classes of disjoint simple closed curves---the $n-3$ associated flows define a Hamiltonian torus action of $(\R/\pi\Z)^{n-3}$ on $\RepDT$. Such an action is maximal in the sense that $n-3$ is equal to half the dimension of $\RepDT$ and it was originally used by Deroin--Tholozan to relate $\RepDT$ and $\CP^{n-3}$ via Delzant's classification of symplectic toric manifolds, see~\cite{deroin-tholozan} for details.

The mapping class group action and the Hamiltonian dynamics of angles functions are related by the following classical identity, traditionally attributed to Goldman~\cite{goldman-ergodic}. To an un-oriented simple closed curve $a$ corresponds a element $\tau_a\in \mcg$ called the \emph{Dehn twist} along the curve $a$. A precise definition can be found in~\cite[Chapter~3]{mcg-primer}. As a diffeomorphism of $\RepDT$, the Dehn twist $\tau_a$ and the Hamiltonian flow $\Phi_a$ are related by
\begin{equation}\label{eq:goldman-trick}
\tau_a=\Phi_a^{t=\vartheta_a/2}.
\end{equation}
In other words, $\tau_a\!\cdot\![\rho]$ is obtained by rotating $[\rho]$ along its $a$-orbit by an angle $\vartheta_a([\rho])/2$. We can therefore think of the Dehn twist $\tau_a$ as a ``discretization'' of the Hamiltonian flow~$\Phi_a$. This interpretation of Dehn twists leads to the following fact.
\begin{fact}\label{fact:Dehn-twists-irrational-rotations}
When $\vartheta_a([\rho])$ is an irrational multiple of $\pi$, the $\tau_a$-orbit of $[\rho]$ is dense inside its $a$-orbit.
\end{fact}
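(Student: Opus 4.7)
The plan is to reduce the claim to the classical density of irrational rotations on a circle, using Goldman's identity~\eqref{eq:goldman-trick} together with the fact that $\vartheta_a$ is preserved by its own Hamiltonian flow.

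First I would observe that because $\{\vartheta_a,\vartheta_a\}=0$, the angle function is constant along every $a$-orbit, so $\vartheta_a\circ\Phi_a^t=\vartheta_a$ for all $t\in\R$. Setting $\theta:=\vartheta_a([\rho])$ and iterating~\eqref{eq:goldman-trick} while using this conservation law, one gets
\[
\tau_a^k\cdot[\rho]=\Phi_a^{k\theta/2}([\rho])\quad\text{for every }k\in\Z,
\]
so the $\tau_a$-orbit of $[\rho]$ is contained in its $a$-orbit. If $[\rho]$ is a critical point of $\vartheta_a$, then $X_a$ vanishes there, the $a$-orbit reduces to the singleton $\{[\rho]\}$, and the statement is vacuous. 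Otherwise, by~\cite[Proposition~3.3]{deroin-tholozan} the $a$-orbit is an embedded circle of length $\pi$, faithfully parametrized by the map $t\in\R/\pi\Z\mapsto\Phi_a^t([\rho])$; under this parametrization the $\tau_a$-orbit corresponds to the cyclic subgroup $(\theta/2)\Z\subset\R/\pi\Z$. This subgroup is dense precisely when $\theta/(2\pi)\notin\Q$, which is equivalent to $\theta$ being an irrational multiple of~$\pi$.

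There is no real obstacle in this argument; it is entirely carried by~\eqref{eq:goldman-trick}, the invariance of a Hamiltonian along its own flow, and the $\pi$-periodicity of $\Phi_a$. The only subtlety worth flagging is to correctly track the factor $1/2$ in~\eqref{eq:goldman-trick} so that the density criterion comes out as $\theta\notin\pi\Q$ rather than $\theta\notin 2\pi\Q$.
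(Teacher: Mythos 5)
Your proof is correct and is exactly the argument the paper has in mind: the Fact is stated as an immediate consequence of the identity $\tau_a=\Phi_a^{t=\vartheta_a/2}$ together with the $\pi$-periodicity of $\Phi_a$, and your write-up simply makes explicit the (correct) reduction to density of an irrational rotation orbit in $\R/\pi\Z$, including the needed observation that $\vartheta_a$ is invariant under its own flow. (Your final caveat about the factor $1/2$ is moot since $\pi\Q=2\pi\Q$, but it does no harm.)
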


\subsection{Parametrization by triangle chains}\label{sec:parametrization-by-triangle-chains}
Using total ellipticity of DT representations, it is possible to build a combinatorial model for $\RepDT$ in terms of polygonal objects in the hyperbolic plane called \emph{triangle chains}. We briefly recall Maret's construction of triangle chains from~\cite{action-angle} and refer the reader the original paper for more details. 

To associate a triangle chain to a DT representation, one first picks a pants decomposition, say $\mathcal{B}$, of $\surface$. We always work with so-called \emph{chained pants decompositions}, meaning that every pair of pants contains at least one of the punctures of $\surface$. The next step is to find a geometric presentation of $\pi_1\surface$ which is compatible with $\mathcal{B}$ in the following sense. Recall that a geometric presentation of $\pi_1\surface$ has generators $c_1,\ldots, c_n$ that satisfy $c_1\cdots c_n=1$ (Section~\ref{sec:dt-representations-origin}), where each $c_i$ is the homotopy class of a peripheral loop. Such a presentation is said to be \emph{compatible} with $\mathcal{B}$, if the $n-3$ pants curves lift to the fundamental group elements $b_i=(c_1\cdots c_{i+1})^{-1}$ for $i=1,\ldots,n-3$. The pants decomposition of $\surface$ given by $b_1,\ldots, b_{n-3}$ is called the \emph{standard} pants decomposition associated to the geometric generators $(c_1,\ldots, c_n)$. It is always possible to find a geometric presentation of $\pi_1\surface$ which is compatible with a given chained pants decomposition, as explained in~\cite[Appendix~B]{action-angle}. 

\begin{center}
\vspace{2mm}
\begin{tikzpicture}[scale=1.3, decoration={
    markings,
    mark=at position 0.6 with {\arrow{>}}}]
  \draw[postaction={decorate}] (0,-.5) arc(-90:-270: .25 and .5) node[midway, left]{$c_1$};
  \draw[black!40] (0,.5) arc(90:-90: .25 and .5);
  \draw[apricot, postaction={decorate}] (2,.5) arc(90:270: .25 and .5) node[midway, left]{$b_1$};
  \draw[lightapricot] (2,.5) arc(90:-90: .25 and .5);
  \draw[apricot, postaction={decorate}] (4,.5) arc(90:270: .25 and .5) node[midway, left]{$b_2$};
  \draw[lightapricot] (4,.5) arc(90:-90: .25 and .5);
  \draw[apricot, postaction={decorate}] (6,.5) arc(90:270: .25 and .5) node[midway, left]{$b_3$};
  \draw[lightapricot] (6,.5) arc(90:-90: .25 and .5);
  \draw[postaction={decorate}] (8,.5) arc(90:270: .25 and .5) node[midway, left]{$c_6$};
  \draw (8,.5) arc(90:-90: .25 and .5);
  
  \draw (.5,1) arc(180:0: .5 and .25) node[midway, above]{$c_2$};
  \draw[postaction={decorate}] (.5,1) arc(-180:0: .5 and .25);
  \draw (2.5,1) arc(180:0: .5 and .25)node[midway, above]{$c_3$};
  \draw[postaction={decorate}] (2.5,1) arc(-180:0: .5 and .25);
  \draw (4.5,1) arc(180:0: .5 and .25)node[midway, above]{$c_4$};
  \draw[postaction={decorate}] (4.5,1) arc(-180:0: .5 and .25);
  \draw (6.5,1) arc(180:0: .5 and .25)node[midway, above]{$c_5$};
  \draw[postaction={decorate}] (6.5,1) arc(-180:0: .5 and .25);
   
  \draw (0,.5) to[out=0,in=-90] (.5,1);
  \draw (1.5,1) to[out=-90,in=180] (2,.5);
  \draw (0,-.5) to[out=0,in=180] (2,-.5);
  
  \draw (2,.5) to[out=0,in=-90] (2.5,1);
  \draw (3.5,1) to[out=-90,in=180] (4,.5);
  \draw (2,-.5) to[out=0,in=180] (4,-.5);
  
  \draw (4,.5) to[out=0,in=-90] (4.5,1);
  \draw (5.5,1) to[out=-90,in=180] (6,.5);
  \draw (4,-.5) to[out=0,in=180] (6,-.5);
  
  \draw (6,.5) to[out=0,in=-90] (6.5,1);
  \draw (7.5,1) to[out=-90,in=180] (8,.5);
  \draw (6,-.5) to[out=0,in=180] (8,-.5);
\end{tikzpicture}
\vspace{2mm}
\end{center}
Note that a geometric presentation of $\pi_1\surface$ induces a bijection $\mathcal{P}\to \{1,\ldots,n\}$ which gives a labeling of the punctures. In practice, we will use this labeling to index variables, such as the angle vector $\alpha$, on $\{1,\ldots,n\}$ rather than on $\mathcal{P}$.

Now that we have found a compatible geometric presentation to our chosen pants decomposition $\mathcal{B}$, we can explain how to associate a triangle chain to a DT representation~$\rho$ which we will call the \emph{$\mathcal{B}$-triangle chain} of $\rho$. It is made of $n-2$ hyperbolic triangles as follows:
\begin{itemize}
    \item Draw the fixed points $C_1,\ldots,C_n$ of $\rho(c_1),\ldots,\rho(c_n)$ respectively---the \emph{exterior vertices}---and the fixed points $B_1,\ldots,B_{n-3}$ of $\rho(b_1),\ldots,\rho(b_{n-3})$ respectively---the \emph{shared vertices}. Here we are using that $\rho$ is totally elliptic in order to say that $\rho(b_1),\ldots,\rho(b_{n-3})$ are elliptic.
    \item Draw a geodesic segment between two of these points if the corresponding curves on $\surface$ belong to the same pair of pants. We end up with a chain of $n-2$ triangles whose vertices are $(C_1,C_2,B_1), (B_1,C_3,B_2),\ldots, (B_{n-3},C_{n-1},C_n)$.
\end{itemize}
\begin{center}
\begin{tikzpicture}[font=\sffamily, scale=1.2]
    
\node[anchor=south west,inner sep=0] at (0,0) {\includegraphics[width=10.8cm]{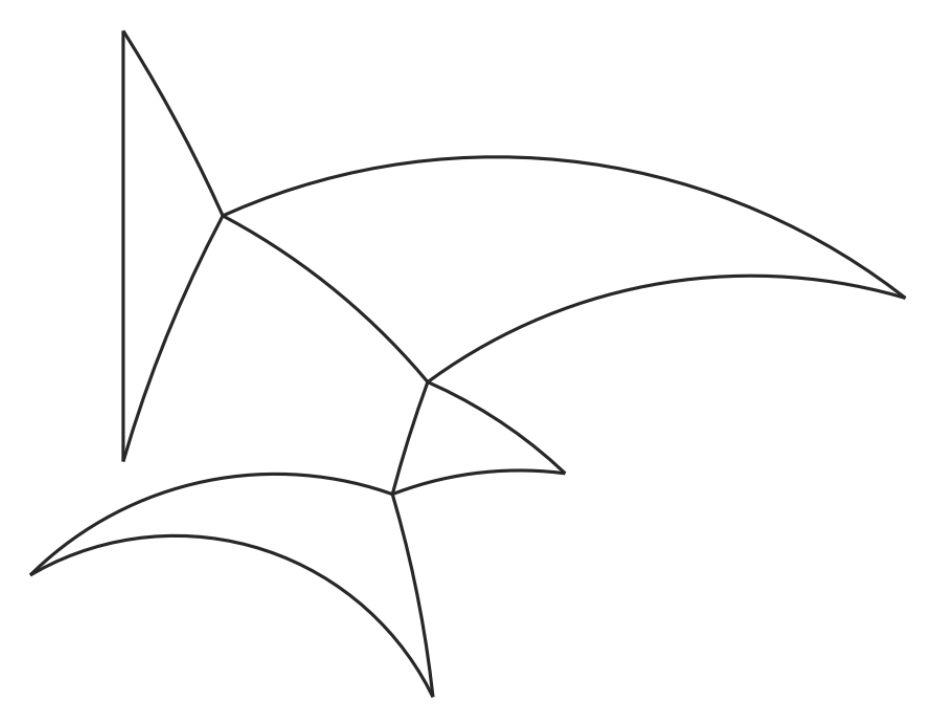}};

\begin{scope}
\fill (1.19,2.53) circle (0.07) node[left]{$C_1$};
\fill (1.19,6.62) circle (0.07) node[left]{$C_2$};
\fill (8.65,4.08) circle (0.07) node[below right]{$C_3$};
\fill (5.4,2.4) circle (0.07) node[below right]{$C_4$};
\fill (4.14,0.28) circle (0.07) node[below right]{$C_5$};
\fill (.31,1.43) circle (0.07) node[left]{$C_6$};
\end{scope}

\begin{scope}[apricot]
\fill (2.15,4.86) circle (0.07);
\fill (2.25,5.2) node{$B_1$};
\fill (4.1,3.27) circle (0.07) node[left]{$B_2$};
\fill (3.76,2.2) circle (0.07);
\fill (4.05,2.05) node{$B_3$};
\end{scope}
\end{tikzpicture}
\end{center}

Chains of triangles enjoy the following pleasant properties, see~\cite[Section \S3.2]{action-angle} for more details and proofs:
\begin{itemize}
    \item The interior angle at each exterior vertex $C_i$ is $\pi-\alpha_i/2$.
    \item The two interior angles on each side of a shared vertex $B_i$ add up to $\pi$.
    \item The triangles are clockwise oriented for the order of points given above.
\end{itemize}
It is possible that triangles overlap or that up to all but one triangle are degenerate to a single point. We shall say that the triangle chain is \emph{singular} whenever this happens and \emph{regular} otherwise. It is however not possible that all the triangles are degenerate since the total area of the triangles in the chain is $\lambda/2$, where $\lambda$ is the positive constant introduced in~\eqref{eq:volume-and-lambda}.

The realization of DT representations as triangle chains can be used to produce action-angle coordinates for $\RepDT$, as explained in~\cite{action-angle}. The action coordinates are given by the angle functions (introduced in~\eqref{eq:angle-function}) associated to the pants curves of $\mathcal{B}$. For simplicity, we shall write $\beta_i=\vartheta_{b_i}$. It is another important feature of triangle chains that the interior angles on both sides of the shared vertex $B_i$ are equal to $\pi-\beta_i/2$ and $\beta_i/2$, as shown on the picture below. The angle coordinates $\gamma_1,\ldots,\gamma_{n-3}$ are the angles ``between" consecutive triangles in the chain. It is worth mentioning that not all the angles $\gamma_1,\ldots,\gamma_{n-3}$ are well-defined when some triangles in the chain are degenerate.
\begin{center}
\begin{tikzpicture}[scale=1.2, font=\sffamily,decoration={markings, mark=at position 1 with {\arrow{>}}}]
\node[anchor=south west,inner sep=0] at (0,0) {\includegraphics[width=10.8cm]{fig/fig-triangles-black}};

\begin{scope}
\fill (1.19,2.53) circle (0.07) node[left]{$C_1$};
\fill (1.19,6.62) circle (0.07) node[left]{$C_2$};
\fill (8.65,4.08) circle (0.07) node[below right]{$C_3$};
\fill (5.4,2.4) circle (0.07) node[below right]{$C_4$};
\fill (4.14,0.28) circle (0.07) node[below right]{$C_5$};
\fill (.31,1.43) circle (0.07) node[left]{$C_6$};
\end{scope}

\begin{scope}[apricot]
\fill (2.15,4.86) circle (0.07);
\fill (2.25,5.2) node{$B_1$};
\fill (4.1,3.27) circle (0.07) node[left]{$B_2$};
\fill (3.76,2.2) circle (0.07);
\fill (4.05,2.05) node{$B_3$};
\end{scope}

\draw[thick, postaction={decorate}, sky] (2.8,5.1) arc (10:115:.7) node[near end, above right]{$\gamma_1$};
\draw[thick, postaction={decorate}, sky] (4.7,3) arc (-20:32:.7) node[near end, below right]{$\gamma_2$};
\draw[thick, postaction={decorate}, sky] (3.95,1.6) arc (-80:15:.6) node[near end, below right]{$\gamma_3$};

\draw[thick, apricot] (1.9,5.3) arc (109:250:.5);
\draw[apricot] (2.4,4.2) node{\tiny $\pi-\beta_1/2$};
\draw[thick, apricot] (4.5,3.55) arc (25:129:.5) node[midway, above]{\small $\pi-\beta_2/2$};
\draw[thick, apricot] (4.25,2.35) arc (5:78:.4) node[at end, left]{\small $\pi-\beta_3/2$};
\end{tikzpicture}
\end{center}

The action coordinates $\beta_1,\ldots, \beta_{n-3}$ along with the angle coordinates $\gamma_1,\ldots, \gamma_{n-3}$ completely parametrize $\RepDT$. Since these coordinates were constructed from the pants decomposition $\mathcal{B}$, the actions of the Dehn twists $\tau_{b_1},\ldots, \tau_{b_{n-3}}$ can be expressed in a simple way. If $[\rho]\in\RepDT$ is not fixed by $\tau_{b_i}$ and it has coordinates $(\beta_1,\ldots,\beta_{n-3},\gamma_1,\ldots,\gamma_{n-3})$, then the coordinates of $\tau_{b_i}\!\cdot\![\rho]$ are the same as those of $[\rho]$ except for
\begin{equation}\label{eq:action-of-tau_b_i}
\gamma_i(\tau_{b_i}\!\cdot\![\rho])=\gamma_i+\beta_i.
\end{equation}
For more details, the reader is invited to consult~\cite{action-angle}.

\subsection{Moment map}\label{sec:moment-map}
The functions $\beta_1,\ldots,\beta_{n-3}$ are essentially the components of a moment map for the Hamiltonian torus action described in Section~\ref{sec:DT-ham-dynamics}. A convenient choice of a moment map $\mu\colon \RepDT\to \R^{n-3}$ is the following. We define $\mu_i([\rho])$ as $1/\lambda$ times the non-negative area of the $(i+1)^{th}$ triangle in the $\mathcal{B}$-triangle chain of $[\rho]$, where $\lambda$ is the constant introduced in~\eqref{eq:volume-and-lambda}. In other words, $\mu=(\mu_1,\ldots,\mu_{n-3})$ with:
\begin{align*}
\mu_1&=\frac{1}{2\lambda}(\alpha_3-\beta_1 +\beta_2-2\pi)\\
\mu_2&= \frac{1}{2\lambda}(\alpha_4-\beta_2 +\beta_3-2\pi)\\
&\,\,\,\vdots\\
\mu_{n-3}&=\frac{1}{2\lambda}(\alpha_{n-1}+\alpha_n-\beta_{n-3}-2\pi).
\end{align*}
For convenience, we also introduce
\[
\mu_0=\frac{1}{2\lambda}(\alpha_1+\alpha_2 +\beta_1-4\pi).
\]
The image of $\mu$ inside $\R^{n-3}$ is isometric to the standard polytope of $\R^{n-3}$ of side length~$\lambda$. It is called the $\emph{moment polytope}$ of $\mu$ and was first described by Deroin--Tholozan in~\cite{deroin-tholozan}. The preimage of the interior of the moment polytope is an open and dense part of $\RepDT$ that foliates into Lagrangian tori via $\mu$. We denote it by 
\[
\IntRepDT{\mathcal{B}}(\surface) \subset \RepDT
\]
because it depends on the initial choice of pants decomposition $\mathcal{B}$. It consists of all $[\rho]$ whose $\mathcal{B}$-triangle chain is regular. The fibers of the moment map that lie inside $\IntRepDT{\mathcal{B}}(\surface)$ will be called \emph{regular fibers}. The preimage of the boundary of the moment polytope is made of smaller dimensional isotropic tori called \emph{irregular fibers}. The points in the irregular fibers are characterized by at least one triangle being degenerate in the $\mathcal{B}$-triangle chain. 

It was already observed in~\cite[Lemma~5.1]{arnaud-sam} that the inequalities $\mu_i\geq 0$ and $\alpha_i<2\pi$ imply that
\begin{equation}\label{eq:beta_i-increasing-sequence}
\beta_1<\beta_2<\cdots<\beta_{n-3}.
\end{equation}

In the following, it will be useful to switch points of view between the components of the moment map $\mu$ and the action coordinates $\beta_1,\ldots,\beta_{n-3}$. We shall also require a criterion to identify the fixed points of a given Dehn twist in terms of triangle chains. The following fact is an immediate consequence of the definition of the moment map $\mu$.

\begin{fact}\label{fact:characterization-fixed-point-Dehn-twist}
For any $[\rho]\in\RepDT$ and any $i=1,\ldots, n-3$, the following equivalences hold:
\begin{align}
(d\beta_i)_{[\rho]}=0 \quad &\Leftrightarrow\quad \mu_0([\rho])=\cdots=\mu_{i-1}([\rho])=0\quad\text{or}\quad \mu_i([\rho])=\cdots=\mu_{n-3}([\rho])=0 \label{eq:differential-beta-i-vanishes-iff-consecutive-areas-are-zero} \\
&\Leftrightarrow \quad \tau_{b_i}\!\cdot\! [\rho]=[\rho] \nonumber \\
&\Leftrightarrow \quad \text{the $b_{i}$-orbit of $[\rho]$ is a point} \nonumber 
\end{align}
In other words, the critical points of $\beta_i$ coincide with the fixed points of the Dehn twist $\tau_{b_i}$ and they consist of all the points whose $\mathcal{B}$-triangle chain is of the following shape: all the triangles to either the left or the right of the shared vertex $B_i$ are degenerate. 
\end{fact}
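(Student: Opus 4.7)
My plan is to establish the four-way equivalence in three stages, dispatching the formal equivalences first, then the easy half of the characterization via the $\mu_j$'s, and finally the more substantive converse.

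First, non-degeneracy of $\omega_\mathcal{G}$ translates $(d\beta_i)_{[\rho]}=0$ into $X_{b_i}([\rho])=0$, which means $[\rho]$ is a fixed point of the flow $\Phi_{b_i}$, equivalently that the $b_i$-orbit of $[\rho]$ is the singleton $\{[\rho]\}$. The equivalence with $\tau_{b_i}\cdot[\rho]=[\rho]$ uses~\eqref{eq:goldman-trick}: if the orbit is a point then $\tau_{b_i}\cdot[\rho]=\Phi_{b_i}^{\beta_i([\rho])/2}([\rho])=[\rho]$; conversely, when the orbit is an embedded $\pi$-periodic circle, $\Phi_{b_i}^s$ fixes $[\rho]$ only for $s\in\pi\Z$, and $\beta_i([\rho])/2\in(0,\pi)$ never meets this condition.

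Next, for the easy direction of the $\mu_j$-characterization, the formulas of Section~\ref{sec:moment-map} give
\[
\beta_i \;=\; \mathrm{const} + 2\lambda\sum_{j=0}^{i-1}\mu_j \;=\; \mathrm{const}' - 2\lambda\sum_{j=i}^{n-3}\mu_j,
\]
the second equality coming from $\sum_j\mu_j=1/2$. Each $\mu_j$ is smooth and non-negative, so a zero of $\mu_j$ is a global minimum, forcing $(d\mu_j)_{[\rho]}=0$. If $\mu_0([\rho])=\cdots=\mu_{i-1}([\rho])=0$ (resp.\ $\mu_i([\rho])=\cdots=\mu_{n-3}([\rho])=0$), the corresponding sum of differentials vanishes, and hence so does $d\beta_i$ at $[\rho]$.

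For the converse, I would invoke the toric-symplectic structure of $\RepDT$: the vector fields $X_{b_1},\ldots,X_{b_{n-3}}$ generate a Hamiltonian $T^{n-3}$-action whose moment polytope $\Delta$, in the coordinates $(\beta_1,\ldots,\beta_{n-3})$, is the simplex cut out by the $n-2$ inequalities $\mu_j\geq 0$ for $j=0,\ldots,n-3$. The critical set of $\beta_i$ coincides with the fixed locus of the $b_i$-circle, which for a symplectic toric manifold equals the preimage under $\mu$ of the union of faces of $\Delta$ on which $\beta_i$ is constant. Examining the inward facet normals $n_0=e_1$, $n_j=-e_j+e_{j+1}$ for $1\leq j\leq n-4$, $n_{n-3}=-e_{n-3}$, together with the telescoping identities $e_i=\sum_{j=0}^{i-1}n_j=-\sum_{j=i}^{n-3}n_j$ (and $\sum_{j=0}^{n-3}n_j=0$), one checks that a face $F_S=\bigcap_{j\in S}\{\mu_j=0\}$ lies in $\{\beta_i=\mathrm{const}\}$ if and only if $S$ contains either the initial segment $\{0,\ldots,i-1\}$ or the terminal segment $\{i,\ldots,n-3\}$---any other $S$ leaves a ``gap'' making $e_i$ fail to lie in $\mathrm{span}\{n_j:j\in S\}$. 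Taking the union yields the decomposition of the critical set of $\beta_i$ as $\{\mu_0=\cdots=\mu_{i-1}=0\}\cup\{\mu_i=\cdots=\mu_{n-3}=0\}$. The geometric reformulation at the end of the statement is then tautological, since $\mu_j=0$ is exactly the condition that the $(j+1)$-st triangle of the chain is degenerate. I expect the combinatorial identification of faces in this last step to be the trickiest piece; as a hands-on alternative, one could describe $\Phi_{b_i}^t$ as the twist flow that fixes $T_0,\ldots,T_{i-1}$ and rotates $T_i,\ldots,T_{n-3}$ rigidly around $B_i$, then invoke the rigidity of $\psl$---no non-trivial orientation-preserving isometry of $\mathbb{H}^2$ simultaneously fixes a non-degenerate triangle and realizes a non-trivial rotation of another non-degenerate triangle on the opposite side of $B_i$.
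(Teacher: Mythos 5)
Your argument is correct and fills in what the paper simply declares to be an ``immediate consequence of the definition of the moment map'' without writing out. The formal equivalences in your first stage---$d\beta_i=0$ iff $X_{b_i}=0$ iff the $b_i$-orbit is a singleton, and $\tau_{b_i}\cdot[\rho]=[\rho]$ iff the orbit is a point using~\eqref{eq:goldman-trick} together with $\beta_i/2\in(0,\pi)$---are exactly the intended reasoning. The easy half of the $\mu_j$-characterization via $\beta_i=\mathrm{const}+2\lambda\sum_{j<i}\mu_j=\mathrm{const}'-2\lambda\sum_{j\geq i}\mu_j$ and the fact that a zero of a non-negative smooth function is a critical point is also the right observation. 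What differs is your main treatment of the converse. You invoke the Delzant/toric formalism and a facet-normal calculation; this does work (the inward normals are $e_1,-e_1+e_2,\ldots,-e_{n-4}+e_{n-3},-e_{n-3}$, and the ``indicator of a gap'' covector you describe does separate $e_i$ from $\mathrm{span}\{n_j:j\in S\}$ whenever $S$ misses an index on each side of $i$), but it brings in more machinery than the phrase ``immediate consequence'' suggests and somewhat obscures the geometry. Your ``hands-on alternative'' is actually the route the paper has in mind: $\Phi_{b_i}^t$ rotates one half of the triangle chain rigidly around $B_i$, and this rotation can be undone by a global isometry of $\HH^2$ precisely when all the triangles on one side have collapsed to $B_i$---which is literally the condition $\mu_0=\cdots=\mu_{i-1}=0$ or $\mu_i=\cdots=\mu_{n-3}=0$. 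If you want to match the paper's symplectic-and-triangle-chain perspective, I would promote that alternative to the main argument and keep the polytope computation as a cross-check.
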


\section{Infinitesimal transversality via Poisson brackets}\label{sec:transervality}
\subsection{Overview}
The goal of this section is to describe an explicit family of simple closed curves on $\surface$ whose angle functions, once differentiated, generate the cotangent space to $\RepDT$, at least at every point of $\IntRepDT{\mathcal{B}}(\surface)$. By adopting a geometric perspective, this happens when enough Hamiltonian flow lines meet transversely at every point (Section~\ref{sec:local-parameterization}). We begin with the case of a 4-punctured sphere (Section~\ref{sec:transervality-n=4}) before generalizing the arguments to an arbitrary number of punctures (Section~\ref{sec:general-case}) for two reasons. First, it brings some clarity on the technical arguments involved, and, second, the transversality result for $n=4$ (Corollary~\ref{cor:cotangent-space-generated-by-beta-delta-epsilon-n=4}) is slightly stronger than its generalization (Corollary~\ref{cor:cotangent-space-generated-by-beta_i-delta_i-epsilon_i}).

\subsection{Local transversality}\label{sec:local-parameterization}
We start by explaining how to parametrize a neighborhood of a point by using Hamiltonian flow lines. Let $M$ denote a $2m$-dimensional symplectic manifold and $f_1,\ldots,f_{2m}\colon M\to \R$ be Hamiltonian functions such that the cotangent space to $M$ at some point $x$ has basis $(df_1)_x,\ldots, (df_{2m})_x$. The Inverse Function Theorem implies the existence of an open neighborhood $U\subset M$ of $x$ which can be parametrized by the cube $(-\varepsilon,\varepsilon)^{2m}\subset \R^{2m}$ via the diffeomorphism 
\begin{align*}
\Psi\colon (-\varepsilon,\varepsilon)^{2m}&\to U\subset M\\
(t_1,\ldots,t_{2m})&\mapsto \Phi_{f_1}^{t_1}\circ\cdots\circ \Phi_{f_{2m}}^{t_{2m}}(x).
\end{align*}

\begin{lem}\label{lem:permuting-Hamiltonian-flows}
For every permutation $(f_1',\ldots, f_{2m}')$ of $(f_1,\ldots, f_{2m})$, there exists $\varepsilon'>0$ such that the map  
\begin{align*}
\Psi'\colon (-\varepsilon',\varepsilon')^{2m}&\to  M\\
(t_1,\ldots,t_{2m})&\mapsto \Phi_{f_1'}^{t_1}\circ\cdots\circ \Phi_{f_{2m}'}^{t_{2m}}(x)
\end{align*}
is a well-defined diffeomorphism onto an open subset $U'$ of $U$.
\end{lem}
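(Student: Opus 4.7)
The plan is a direct application of the Inverse Function Theorem to $\Psi'$ at the origin, exactly as for $\Psi$ itself. First, I would compute the differential $d\Psi'_0$ by evaluating the partial derivatives of $\Psi'$ at $0$. Fix an index $i \in \{1, \ldots, 2m\}$. At $t = 0$, every flow $\Phi_{f_j'}^{t_j}$ with $j \neq i$ is the identity, so
\[
\partial_{t_i}\big|_{t=0} \Psi'(t_1, \ldots, t_{2m}) \;=\; \partial_{t_i}\big|_{t_i=0}\,\Phi_{f_i'}^{t_i}(x) \;=\; X_{f_i'}(x).
\]
Hence the columns of $d\Psi'_0$, expressed in a local frame near $x$, are exactly the Hamiltonian vectors $X_{f_1'}(x), \ldots, X_{f_{2m}'}(x)$.

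Next, I would show these vectors form a basis of $T_xM$. The symplectic form $\omega$ on $M$ induces a linear isomorphism $\omega^\flat \colon T_xM \to T_x^*M$ given by $v \mapsto \omega_x(v, -)$, and by definition of Hamiltonian vector field it satisfies $\omega^\flat\bigl(X_{f_i'}(x)\bigr) = (df_{i}')_x$. By hypothesis, $(df_1)_x, \ldots, (df_{2m})_x$ form a basis of $T_x^*M$, and any permutation of a basis is again a basis; pulling back by the isomorphism $(\omega^\flat)^{-1}$, we conclude that $X_{f_1'}(x), \ldots, X_{f_{2m}'}(x)$ form a basis of $T_xM$. Thus $d\Psi'_0$ is invertible.

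By the Inverse Function Theorem, there exists some $\varepsilon_0 > 0$ such that $\Psi'$ restricted to the open cube $(-\varepsilon_0, \varepsilon_0)^{2m}$ is a diffeomorphism onto an open neighborhood $V$ of $x = \Psi'(0)$ in $M$. Since $U$ is an open neighborhood of $x$ and $\Psi'$ is continuous at $0$, we may shrink $\varepsilon_0$ to some $\varepsilon' \in (0, \varepsilon_0]$ so that $\Psi'\bigl((-\varepsilon', \varepsilon')^{2m}\bigr) \subseteq U$. Setting $U' := \Psi'\bigl((-\varepsilon', \varepsilon')^{2m}\bigr)$, which is open in $M$ and contained in $U$, yields the desired diffeomorphism.

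There is no real obstacle; the only point to be careful about is that the partial derivatives of $\Psi'$ at $0$ are evaluated at $x$ itself (and not at some point along an intermediate flow line), which is what makes the argument reduce cleanly to the spanning statement for $(df_i)_x$.
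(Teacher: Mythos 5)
Your proof is correct and follows essentially the same route as the paper: apply the Inverse Function Theorem at the origin (where the partial derivatives are exactly the Hamiltonian vector fields $X_{f_i'}(x)$, which span $T_xM$ because $\omega^\flat$ is an isomorphism carrying them to the basis $(df_i')_x$), then shrink the cube so that the image lands inside $U$. The only cosmetic difference is in the shrinking step, where you invoke continuity of $\Psi'$ at $0$ directly while the paper uses a nested sequence of compact cubes converging to $\{x\}$; both arguments are valid and interchangeable.
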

\begin{proof}
The Inverse Function Theorem implies the existence of $\varepsilon'>0$ such that $\Psi'$ is a well-defined diffeomorphism onto an open subset $U'$ of $M$. What we have to prove is that we can shrink $\varepsilon'$ if necessary to guarantee that $U'\subset U$. This follows from a standard topological argument. The decreasing sequence of compact subsets $\Psi'([-1/n, 1/n]^{2m})\subset U'$ converges to the singleton $\{x\}$ and must therefore eventually be contained inside $U$.
\end{proof}

In particular, when we are dealing with two Hamiltonian functions $f$ and $g$ such that the Poisson bracket $\{f,g\}(x)\neq 0$, then $f$ is a diffeomorphism from a small open interval around $x$ along its $g$-orbit to an interval in $\R$. The same conclusion holds if we permute the roles of $f$ and $g$. This gives us the following fact.

\begin{fact}\label{fact:densely-many-irrational-points}
If $\{f,g\}(x)\neq 0$, then there are densely many points in an open interval around~$x$ contained in its $g$-orbit whose value under $f$ is an irrational multiple of $\pi$.
\end{fact}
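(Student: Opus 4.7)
The plan is to reduce the statement to a one-dimensional inverse function theorem argument along the Hamiltonian flow line of $g$ through $x$, combined with the density of irrational multiples of $\pi$ in $\R$.

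First, I would parametrize the $g$-orbit of $x$ by the Hamiltonian flow: consider the smooth curve $\gamma(t)=\Phi_g^t(x)$, defined for $t$ in a small open interval $I$ around $0$. Then look at the composition $h\colon I\to \R$, $h(t)=f(\Phi_g^t(x))$. The chain rule together with the very definition of the Poisson bracket recalled in Section~\ref{sec:goldamn-symplectic-form} gives
\[
h'(0)=(df)_x\bigl(X_g(x)\bigr)=\{f,g\}(x)\neq 0.
\]

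Second, since $h'(0)\neq 0$, the Inverse Function Theorem (in dimension one) implies that, after possibly shrinking $I$, the map $h$ is a diffeomorphism from $I$ onto an open interval $J\subset\R$ containing $f(x)$. Equivalently, $f$ restricted to the open arc $\gamma(I)$ of the $g$-orbit through $x$ is a diffeomorphism onto $J$.

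Third, the set of irrational multiples of $\pi$ is dense in $\R$, hence dense in $J$. Pulling this dense subset back through the diffeomorphism $h$ produces a dense subset of $I$, and therefore of the open arc $\gamma(I)$ of the $g$-orbit, whose image under $f$ is an irrational multiple of $\pi$. This is precisely the claimed statement. There is no real obstacle here: the only nontrivial ingredient is the identification $\frac{d}{dt}\bigr|_{t=0} f(\Phi_g^t(x))=\{f,g\}(x)$, which is immediate from the definition of the Hamiltonian vector field and the Poisson bracket.
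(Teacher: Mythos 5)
Your proof is correct and follows essentially the same route as the paper, which derives this fact from the discussion immediately preceding it in Section~\ref{sec:local-parameterization}: since $\{f,g\}(x)=df(X_g)(x)\neq 0$, the Inverse Function Theorem makes $f$ a diffeomorphism from a small arc of the $g$-orbit onto an interval of $\R$, and one pulls back the dense set of irrational multiples of $\pi$. Your write-up simply makes explicit the computation $\frac{d}{dt}\big|_{t=0}f(\Phi_g^t(x))=\{f,g\}(x)$ that the paper leaves implicit, and the sign convention you use matches the paper's definition $\{f,g\}=df(X_g)$.
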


Fact~\ref{fact:densely-many-irrational-points} will often be used in combination with Fact~\ref{fact:Dehn-twists-irrational-rotations} by taking $f$ and $g$ to be two angle functions $\vartheta_a$ and $\vartheta_b$, as defined in Section~\ref{sec:DT-ham-dynamics}. Briefly, if $\{\vartheta_a,\vartheta_b\}([\rho])\neq 0$ for some $[\rho]\in\RepDT$, then Fact~\ref{fact:densely-many-irrational-points} implies the existence of densely many $[\rho']$ in a small interval~$I$ around $[\rho]$ contained in its $b$-orbit, such that $\vartheta_a([\rho'])$ is an irrational multiple of~$\pi$. In particular, by Fact~\ref{fact:Dehn-twists-irrational-rotations}, the orbit of each $[\rho']$ under the Dehn twist $\tau_a$ is dense inside its $a$-orbit. So, if we knew, for instance, that the closure of the mapping class group orbit of~$[\rho]$ contains the $b$-orbit of~$[\rho]$, then it would actually contain a two-dimensional neighborhood of $[\rho]$ foliated by the $a$-orbits of every point in $I$. This ``dimension augmentation'' process will be detailed further in Section~\ref{sec:open-set-U}.

\subsection{The 4-punctured sphere}\label{sec:transervality-n=4} 
Let us consider first the case of a sphere with four punctures. We work with an arbitrary geometric presentation of $\pi_1\surface$ with four generators $c_1,c_2,c_3,c_4$ and its standard pants decomposition $\mathcal{B}$ given by the fundamental group element $b=(c_1c_2)^{-1}$. The family of curves we want to consider is the following:
\[
b=(c_1c_2)^{-1},\quad d=(c_{2}c_{3})^{-1},\quad e=(c_1c_3)^{-1}.
\]
\begin{center}
\begin{tikzpicture}[scale=.6]
  \draw[apricot] (0,2.4) arc(90:270:.75 and 2.4) node[near start, above left]{$b$};
  \draw[lightapricot] (0,2.4) arc(90:270:-.75 and 2.4);
  \draw[lightplum] (-2,0) arc(180:0:2 and .5);
  \draw[plum] (-2,0) arc(180:0:2 and -.5) node[near start, below]{$d$};
  
  \draw (-2,3) to[out=-30,in=210] (2,3);
  \draw (3,2) to[out=210,in=150] (3,-2);
  \draw (2,-3) to[out=150,in=30] (-2,-3);
  \draw (-3,-2) to[out=30,in=-30] (-3,2);

  \draw (-3,2) to[out=-30,in=-40] (-2,3);
  \draw (-3,2) to[out=140,in=140] (-2,3);
  \draw (2,3) to[out=30,in=30] (3,2);
  \draw (2,3) to[out=210,in=210] (3,2);
  \draw (3,-2) to[out=-30,in=-30] (2,-3);
  \draw (3,-2) to[out=140,in=140] (2,-3);
  \draw (-2,-3) to[out=30,in=30] (-3,-2);
  \draw (-2,-3) to[out=210,in=210] (-3,-2); 

  \draw (-3.1,3.1) node{$c_2$};
  \draw (3.1,3.1) node{$c_3$};
  \draw (3.1,-3.1) node{$c_4$};
  \draw (-3.1,-3.1) node{$c_1$};
\end{tikzpicture}
\hspace{1cm}
\begin{tikzpicture}[scale=.6]
  \draw[lightmauve] (1,2.55) to[out=10, in=80] (2.15,1);
  \draw[mauve] (1,2.55) to[out=190, in=70] (-2.15,-1);
  \draw[lightmauve] (-2.15,-1) to[out=-110, in=190] (-1,-2.55);
  \draw[mauve] (-1,-2.55) to[out=10, in=-105] (2.15,1);
  \draw[mauve] (-.5,1.5) node{$e$};
  
  \draw (-2,3) to[out=-30,in=210] (2,3);
  \draw (3,2) to[out=210,in=150] (3,-2);
  \draw (2,-3) to[out=150,in=30] (-2,-3);
  \draw (-3,-2) to[out=30,in=-30] (-3,2);

  \draw (-3,2) to[out=-30,in=-40] (-2,3);
  \draw (-3,2) to[out=140,in=140] (-2,3);
  \draw (2,3) to[out=30,in=30] (3,2);
  \draw (2,3) to[out=210,in=210] (3,2);
  \draw (3,-2) to[out=-30,in=-30] (2,-3);
  \draw (3,-2) to[out=140,in=140] (2,-3);
  \draw (-2,-3) to[out=30,in=30] (-3,-2);
  \draw (-2,-3) to[out=210,in=210] (-3,-2); 

  \draw (-3.1,3.1) node{$c_2$};
  \draw (3.1,3.1) node{$c_3$};
  \draw (3.1,-3.1) node{$c_4$};
  \draw (-3.1,-3.1) node{$c_1$};
\end{tikzpicture}
\end{center}
It is known since Fricke, Klein and Vogt that the coordinate ring of the relative $\SL_2\C$ character variety of $\surface$ with boundary traces $2\cos(\alpha_i/2)$ can be generated using the \emph{trace functions} $\Tr_b$, $\Tr_d$, and $\Tr_e$ associated to the curves $b$, $d$, and $e$ for any angle vector $\alpha$ (see for instance~\cite{cantat-loray}). Recall that the trace function associated to a fundamental group element $\gamma\in\pi_1\surface$ is the function $\Tr_\gamma$ defined on the character variety by $\Tr_\gamma([\rho])=\Tr(\rho(\gamma))$. Being able to identify generators of the coordinate ring has geometrical implications: the differentials $d\Tr_b$, $d\Tr_d$, and $d\Tr_e$ generate the cotangent space to the relative character variety at every point. 

We show how to circumvent the algebraic geometry and reprove this statement by computing Poisson brackets. We shall use the action-angle coordinates $(\beta,\gamma)$ from Section~\ref{sec:parametrization-by-triangle-chains}. Since DT representations take values in $\psl$ (and not $\SL_2\R$), it will be more convenient to work with angles functions (already introduced in~\eqref{eq:angle-function}) rather than trace functions. We abbreviate $\delta=\vartheta_{d}$ and $\varepsilon=\vartheta_{e}$. We start by characterizing the points at which the Poisson brackets $\{\beta,\delta\}$ and $\{\beta,\varepsilon\}$ vanish.
\begin{lem}\label{lem:poisson-bracket-vanish-iff-gamma-takes-some-values}
When $\surface$ is a 4-punctured sphere, a point $[\rho]\in\RepDT$ with coordinates $\overline\beta=\beta([\rho])$ and $\overline\gamma=\gamma([\rho])$ satisfies
\begin{align*}
    \{\beta,\delta\}([\rho])=0\quad &\Longleftrightarrow \quad (d\beta)_{[\rho]}=0\quad \text{or} \quad \overline\gamma\in \{0,\pi\}\\
    \{\beta,\varepsilon\}([\rho])=0\quad &\Longleftrightarrow \quad (d\beta)_{[\rho]}=0\quad \text{or} \quad \overline\gamma\in \{\overline\beta/2,\overline\beta/2-\pi\}.
\end{align*}
In particular, since $\overline{\beta}\in (0,2\pi)$, we have $\{\beta,\delta\}([\rho])=\{\beta,\varepsilon\}([\rho])=0$ if and only if $(d\beta)_{[\rho]}=0$.
\end{lem}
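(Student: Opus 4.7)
My plan is to reduce the two Poisson brackets to partial derivatives along the angle coordinate $\gamma$, and then compute those derivatives using hyperbolic trigonometry in the $\mathcal{B}$-triangle chain of $[\rho]$. At a point where $(d\beta)_{[\rho]}=0$, one has $X_\beta|_{[\rho]}=0$, so both $\{\beta,\delta\}([\rho])$ and $\{\beta,\varepsilon\}([\rho])$ trivially vanish, which covers one direction of each equivalence. For the remaining case, I may assume $[\rho]\in\IntRepDT{\mathcal{B}}(\surface)$ so that the action-angle coordinates $(\beta,\gamma)$ are genuine coordinates near $[\rho]$. By the action-angle property the Goldman form is a nonzero constant multiple of $d\beta\wedge d\gamma$ near $[\rho]$ (this can be read off directly from Goldman's trick~\eqref{eq:goldman-trick} together with~\eqref{eq:action-of-tau_b_i}), so $X_\beta$ is a nonzero multiple of $\partial/\partial\gamma$ and $\{\beta,f\}=-df(X_\beta)$ vanishes at $[\rho]$ if and only if $\partial f/\partial\gamma$ does. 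It therefore suffices to locate the critical points of $\delta$ and $\varepsilon$ along the $b$-orbit of $[\rho]$.

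To compute $\partial\delta/\partial\gamma$, observe that the rotation $\rho(d)=\rho(c_{2}c_{3})^{-1}$ fixes a point $D$ such that the oriented hyperbolic triangle $(C_{2},C_{3},D)$ has prescribed interior angles $\alpha_{2}/2$ at $C_{2}$ and $\alpha_{3}/2$ at $C_{3}$. The hyperbolic law of cosines applied to this triangle yields a strictly monotonic relation between $\cos(\delta/2)$ and $\cosh(\ell_{d})$, where $\ell_{d}=d_{\HH}(C_{2},C_{3})$. On the other hand, in the $\mathcal{B}$-triangle chain, $C_{2}$ and $C_{3}$ lie at fixed distances $r_{2}$ and $r_{3}$ from the shared vertex $B$ (depending only on $\beta$ and the fixed angles $\alpha_{i}$) and the edges $BC_{2}$, $BC_{3}$ meet at $B$ with angle $\gamma$, by definition of the angle coordinate. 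A second application of the hyperbolic law of cosines gives
\[
\cosh(\ell_{d})=\cosh(r_{2})\cosh(r_{3})-\sinh(r_{2})\sinh(r_{3})\cos(\gamma).
\]
Differentiating in $\gamma$ and using that $\sinh(r_{2}),\sinh(r_{3}),\sinh(\ell_{d})$ are all strictly positive on $\IntRepDT{\mathcal{B}}(\surface)$, we see that $\partial_\gamma\ell_{d}$, hence $\partial_\gamma\delta$, vanishes exactly when $\sin(\gamma)=0$, i.e.\ $\overline\gamma\in\{0,\pi\}$.

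The computation for $\varepsilon$ is structurally identical, with $C_{2}$ replaced by $C_{1}$. The only new input is the angle $\angle(C_{1},B,C_{3})$ at $B$: since the edges $BC_{1}$ and $BC_{2}$ bound the interior angle $\pi-\beta/2$ of the first triangle at $B$, we get $\angle(C_{1},B,C_{3})=\gamma+\pi-\beta/2$ (up to orientation/sign, which is irrelevant inside a cosine), whose cosine equals $-\cos(\gamma-\beta/2)$. Substituting this into the hyperbolic law of cosines for the triangle $(C_{1},B,C_{3})$ and differentiating shows that $\partial_\gamma\ell_{e}=0$ iff $\sin(\gamma-\beta/2)=0$, i.e.\ $\overline\gamma\in\{\overline\beta/2,\overline\beta/2-\pi\}$ modulo $2\pi$. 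The ``In particular'' assertion then follows immediately: for $\overline\beta\in(0,2\pi)$ one has $\overline\beta/2\in(0,\pi)$ and $\overline\beta/2-\pi\in(-\pi,0)$, so the two finite sets $\{0,\pi\}$ and $\{\overline\beta/2,\overline\beta/2-\pi\}$ are disjoint in $\R/2\pi\Z$. The most delicate point of the argument is the geometric identification of $\angle(C_{1},B,C_{3})$ in terms of $\gamma$ and $\beta$: an off-by-$\beta/2$ error would change the answer, so one has to unwind carefully how the interior angles of the two chain triangles are arranged around the shared vertex $B$ in the normalization of Section~\ref{sec:parametrization-by-triangle-chains}.
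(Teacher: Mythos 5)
Your proposal is correct and follows essentially the same route as the paper: reduce each bracket to the $\gamma$-derivative along the $b$-orbit, then combine two hyperbolic laws of cosines to obtain an affine relation between $\cos(\delta/2)$ (resp.\ $\cos(\varepsilon/2)$) and $\cos(\gamma)$ (resp.\ $\cos(\gamma-\beta/2)$) with coefficients constant along the orbit and nonzero slope on the regular locus. The only wrinkle is your claim that $\sinh(\ell_d)>0$ on $\IntRepDT{\mathcal{B}}(\surface)$ --- it vanishes when $C_2=C_3$, i.e.\ at $\gamma=0$ --- but differentiating the composite affine relation in $\cos\gamma$ directly (as the paper does) sidesteps this, and that point lies in the claimed vanishing locus anyway.
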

\begin{proof}
In the first place, we notice that if $(d\beta)_{[\rho]}=0$ then $\{\beta,\delta\}([\rho])=0$. Thus let us assume that $(d\beta)_{[\rho]}\neq 0$. This means that $[\rho]$ lies in the regular fibers of the moment map (Section~\ref{sec:moment-map}) or, equivalently, that the $\mathcal{B}$-triangle chain of $[\rho]$ consists of two non-degenerate triangles. Since $\{\beta,\delta\}=-d\delta(X_b)$ by definition of the Poisson bracket (Section~\ref{sec:goldamn-symplectic-form}), we have that $\{\beta,\delta\}([\rho])=0$ if and only if $[\rho]$ is a singular point of the function $\delta$ restricted to the $b$-orbit of $[\rho]$ and similarly for $\varepsilon$.

We would like to compare the three triangle chains of $[\rho]$ associated to the three pants decompositions $\mathcal{B}$, $\mathcal{D}$, and $\mathcal{E}$ determined, respectively, by the fundamental group elements $b$, $d$, and $e$. To do so, we first need to find a geometric presentation of $\pi_1\surface$ compatible with each pants decomposition. Because of the definition of $b$, the geometric generators $(c_1,c_2,c_3,c_4)$ are compatible with the pants decomposition given by $b=(c_1c_2)^{-1}$. We also note that $(c_2,c_3,c_4,c_1)$ are geometric generators compatible with $d=(c_2c_3)^{-1}$ and $(c_1,c_3,c_4,(c_3c_4)^{-1}c_2(c_3c_4))$ are geometric generators compatible with $e=(c_1c_3)^{-1}$. We are assuming that the $\mathcal{B}$-triangle chain of $[\rho]$ is regular, so it is made of two non-degenerate triangles $(C_1,C_2,B)$ and $(B,C_3,C_4)$ joined at the shared vertex $B$. They have respective interior angles $(\pi-\alpha_1/2,\pi-\alpha_2/2,\pi-\beta/2)$ and $(\beta/2,\pi-\alpha_3/2,\pi-\alpha_4/2)$. 

Let $D$ and $E$ denote the fixed points of $\rho(d)$ and $\rho(e)$ and let $\overline\delta=\delta([\rho])$ and $\overline\varepsilon=\varepsilon([\rho])$. The first triangle in the $\mathcal{D}$-triangle chain of $[\rho]$ has vertices $(C_2,C_3,D)$. Such a triangle is degenerate if and only if $C_2=C_3$, which can only happen when $\overline{\gamma}=0$. In that case, $(d\delta)_{[\rho]}=0$ by Fact~\ref{fact:characterization-fixed-point-Dehn-twist} and $\{\beta,\delta\}([\rho])=0$. Otherwise, the triangle $(C_2,C_3,D)$ is non-degenerate and has interior angles $(\pi-\alpha_2/2,\pi-\alpha_3/2,\pi-\overline{\delta}/2)$. The first triangle in the $\mathcal{E}$-triangle chain of $[\rho]$ has vertices $(C_1,C_3,E)$. It is degenerate exactly when $C_1=C_3$, which implies $\overline\gamma=\overline\beta/2-\pi$ and $\{\beta,\varepsilon\}([\rho])=0$ as above. Otherwise, it is non-degenerate and has interior angles $(\pi-\alpha_1/2,\pi-\alpha_3/2,\pi-\overline{\varepsilon}/2)$. 
\begin{center}
\begin{tikzpicture}[font=\sffamily, scale=.9]    
\node[anchor=south west,inner sep=0] at (0,0) {\includegraphics[width=5.4cm]{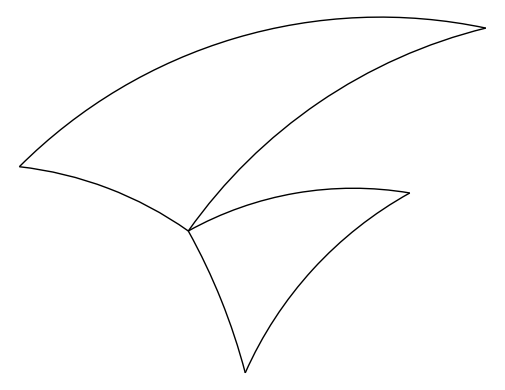}};

\begin{scope}
\fill (2.93,.25) circle (0.07) node[left]{$C_4$};
\fill (0.23,2.65) circle (0.07) node[left]{$C_1$};
\fill (4.88,2.35) circle (0.07) node[right]{$C_3$};
\fill (5.75,4.3) circle (0.07) node[right]{$C_2$};
\fill (2.25,1.9) circle (0.07) node[below left]{$B$};
\end{scope}

\begin{scope}
\draw[mauve] (4.88,2.35) to[bend left=15]  (5.75,4.3);
\fill[mauve] (3.7,3.1) circle (0.07) node[below]{$D$};
\draw[mauve] (4.88,2.35) to[bend right=15]  (3.7,3.1);
\draw[mauve] (5.75,4.3) to[bend right=15]  (3.7,3.1);
\end{scope}

\draw[mauve, thick] (5.5,4) arc (240:215:.6) node[near start, below right]{\small $\pi-\alpha_{2}/2$};
\draw[sky, thick] (2.9,2.2) arc (20:50:.6) node[near start, above right]{\small $\overline\gamma$};
\draw[mauve, thick] (5,2.9) arc (80:135:.6) node[at start, right]{\small $\pi-\alpha_{3}/2$};
\draw[mauve, thick] (4.1,2.95) arc (-20:38:.5) node[at end, above left]{\small $\pi-\overline\delta/2$};
\end{tikzpicture}
\begin{tikzpicture}[font=\sffamily, scale=.9]    
\node[anchor=south west,inner sep=0] at (0,0) {\includegraphics[width=5.4cm]{fig/fig-triangle-chain}};

\begin{scope}
\fill (2.93,.25) circle (0.07) node[left]{$C_4$};
\fill (0.23,2.65) circle (0.07) node[left]{$C_1$};
\fill (4.88,2.35) circle (0.07) node[right]{$C_3$};
\fill (5.75,4.3) circle (0.07) node[right]{$C_2$};
\fill (2.25,1.9) circle (0.07) node[below left]{$B$};
\end{scope}

\begin{scope}
\draw[plum] (4.88,2.35) to[bend right=20]  (0.23,2.65);
\fill[plum] (1.4,1) circle (0.07) node[below]{$E$};
\draw[plum] (4.88,2.35) to[bend right=15]  (1.4,1);
\draw[plum] (0.23,2.65) to[bend left=10]  (1.4,1);
\end{scope}

\draw[apricot, thick] (2.5,2.25) arc (40:155:.4) node[near end, above]{\small $\pi-\overline\beta/2$};
\draw[plum, thick] (0.72,2.8) arc (20:-40:.6) node[at end, left]{\small $\pi-\alpha_{1}/2$};
\draw[sky, thick] (2.9,2.2) arc (20:50:.6) node[near start, above right]{\small $\overline\gamma$};
\draw[plum, thick] (4.1,2.25) arc (200:160:.6) node[at end, above right]{\small $\pi-\alpha_{3}/2$};
\draw[plum, thick] (1.8,1.3) arc (30:115:.5) node[at end, below left]{\small $\pi-\overline\varepsilon/2$};
\end{tikzpicture}
\end{center}
The hyperbolic law of cosines applied to the triangle $(C_2,C_3,D)$ gives
\begin{equation}\label{eq:trig-expression-for-delta-n=4}
-\cos(\overline\delta/2)=-\cos(\alpha_2/2)\cos(\alpha_3/2)+\sin(\alpha_2/2)\sin(\alpha_3/2)\cosh(d(C_2,C_3)).
\end{equation}
If we apply it again to the triangle $(C_2,C_3,B)$, we obtain
\begin{equation}\label{eq:trig-expression-for-gamma-n=4}
\cos(\overline\gamma)=\frac{\cosh(d(C_2,B))\cosh(d(C_3,B))-\cosh(d(C_2,C_3))}{\sinh(d(C_2,B))\sinh(d(C_3,B))}.
\end{equation}
We can eliminate the term $\cosh(d(C_2,C_3))$ in both equations ~\eqref{eq:trig-expression-for-delta-n=4} and~\eqref{eq:trig-expression-for-gamma-n=4} to obtain
\begin{equation}
    \cos(\overline\delta/2) = \cos(\overline\gamma)\cdot k_1 + k_2, \label{eq:delta-in-terms-of-gamma}
\end{equation}
where
\begin{align*}
k_1 &= \sin(\alpha_2/2)\sin(\alpha_3/2)\sinh(d(C_2,B))\sinh(d(C_3,B)),\\
k_2 &= \cos(\alpha_2/2)\cos(\alpha_3/2)-\sin(\alpha_2/2)\sin(\alpha_3/2)\cosh(d(C_2,B))\cosh(d(C_3,B)).
\end{align*}

Since $\alpha_2, \alpha_3$ are fixed parameters and $d(C_2,B), d(C_3,B)$ are constant along $b$-orbits, we conclude that both $k_1$ and $k_2$ are constant along $b$-orbits. Note that $k_1$ vanishes if and only if $C_2=B$ or $C_3=B$, which is equivalent to $(d\beta)_{[\rho]}=0$ by Fact~\ref{fact:characterization-fixed-point-Dehn-twist}. Differentiating~\eqref{eq:delta-in-terms-of-gamma} along the $b$-orbit of $[\rho]$, we obtain
\begin{equation}\label{eq:poisson-bracket-beta-delta}
    \{\beta,\delta\}([\rho])=-(d\delta)_{[\rho]}(X_b)=\frac{-2k_1\sin(\overline{\gamma})}{\sin(\overline\delta/2)}\, (d\gamma)_{[\rho]}(X_b).
\end{equation}
Recall that $(d\gamma)_{[\rho]}(X_b)\neq 0$ because $\gamma$ is the angle coordinate paired with $\beta$. Moreover, since $\delta$ takes values in $(0,2\pi)$, $\sin(\overline{\delta}/2)$ never vanishes. As a consequence, we conclude from~\eqref{eq:poisson-bracket-beta-delta} that $\{\beta,\delta\}([\rho])=0$ if and only if $\sin(\overline\gamma)=0$; that is $\overline\gamma\in\{0,\pi\}$.

Analogously, the hyperbolic law of cosines applied to the triangle $(C_1,C_3,E)$ gives
\begin{equation}\label{eq:trig-expression-for-epsilon-n=4}
-\cos(\overline\varepsilon/2)=-\cos(\alpha_1/2)\cos(\alpha_3/2)+\sin(\alpha_1/2)\sin(\alpha_3/2)\cosh(d(C_1,C_3))
\end{equation}
and, when applied to the triangle $(C_1,C_3,B)$, it gives
\begin{equation}\label{eq:trig-expression-for-gamma-n=4-(case-epsilon)}
-\cos(\overline\beta/2-\overline\gamma)=\frac{\cosh(d(C_1,B))\cosh(d(C_3,B))-\cosh(d(C_1,C_3))}{\sinh(d(C_1,B))\sinh(d(C_3,B))}.
\end{equation}
As above, if we combine equations~\eqref{eq:trig-expression-for-epsilon-n=4} and~\eqref{eq:trig-expression-for-gamma-n=4-(case-epsilon)} we obtain
\begin{equation}\label{eq:epsilon-in-terms-of-gamma}
\cos(\overline\varepsilon/2)=\cos(\overline\beta/2-\overline\gamma)\cdot k_1' + k_2',
\end{equation}
where $k_1'$ and $k_2'$ are constant along $b$-orbits and $k_1'$ vanishes if and only if $(d\varepsilon)_{[\rho]}=0$. Because $\beta$ is also constant along $b$-orbits, after differentiating~\eqref{eq:epsilon-in-terms-of-gamma} along the $b$-orbit of $[\rho]$, we obtain
\begin{equation}\label{eq:poisson-bracket-beta-epsilon}
    \{\beta,\varepsilon\}([\rho])=-(d\varepsilon)_{[\rho]}(X_b)=\frac{2k_1'\sin(\overline{\beta}/2-\overline{\gamma})}{\sin(\overline\varepsilon/2)}\, (d\gamma)_{[\rho]}(X_b).
\end{equation}
For the same reasons as before, we conclude from~\eqref{eq:poisson-bracket-beta-epsilon} that $\{\beta,\varepsilon\}([\rho])=0$ if and only if $\sin(\overline\beta/2-\overline\gamma)=0$, which is equivalent to $\overline\gamma\in\{\overline\beta/2,\overline\beta/2-\pi\}$.
\end{proof}

\begin{rmk}\label{rem:poisson-bracket-vanish-iff-gamma-takes-some-values}
Lemma~\ref{lem:poisson-bracket-vanish-iff-gamma-takes-some-values} is expressed in terms of the action-angle coordinates $(\beta,\gamma)$. It is also possible to formulate an analogous statement in terms of the action-angle coordinates associated to the pants decomposition $\mathcal{D}$. The conclusion would then be that 
\[
\{\delta,\beta\}([\rho])=\{\delta,\varepsilon\}([\rho])=0 \quad\Longleftrightarrow\quad (d\delta)_{[\rho]}=0.
\]
\end{rmk}

\begin{rmk}\label{rem:correspondance-RP1-zero-locus-Poisson-bracket}
It is quite interesting to observe that the locus of points in $\RepDT$ where $\{\beta,\delta\}=0$ identifies with $\mathbb{RP}^1\subset \CP^1$ (the blue curve on the picture below) under the symplectomorphism from~\cite[Section~4]{action-angle} and the locus where $\{\beta,\varepsilon\}=0$ is its image by a ``half Dehn twist'' along the curve $b$.
\begin{center}
\begin{tikzpicture}
  \shade[ball color = mauve, opacity=.2] (0,0) circle (2cm);
  \draw[thick, sky] (0,-2) arc (270:90:1 and 2) node[midway, left]{\tiny $\gamma=0$};
  \draw[dashed, thick, sky] (0,2) arc (90:270:-1 and 2) node[midway, left]{\tiny $\gamma=\pi$};
  \fill[apricot] (0,2) circle (0.07) node[above]{\tiny $d\beta=0$};
  \fill[apricot] (0,-2) circle (0.07) node[below]{\tiny $d\beta=0$};

  \draw[apricot, thick] (3.2,-2.2) to (3.2,2.2);
  \draw[apricot] (2.5, 0) node{$\xrightarrow{\beta}$};
\end{tikzpicture}
\end{center}
\end{rmk}

\begin{cor}\label{cor:cotangent-space-generated-by-beta-delta-epsilon-n=4}
When $\surface$ is a 4-punctured sphere, the cotangent space to $\RepDT$ is generated by $d\beta$, $d\delta$, and $d\varepsilon$ at every point:
\[
T^\ast \RepDT=\big\langle d\beta, d\delta, d\varepsilon \big\rangle.
\]
\end{cor}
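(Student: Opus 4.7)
Since $\RepDT$ has real dimension $2$ when $n=4$, the claim reduces to producing, at every $[\rho]$, two linearly independent covectors among $d\beta, d\delta, d\varepsilon$. The plan is to split on whether $(d\beta)_{[\rho]}$ vanishes and to exploit the following mechanism: a nonzero Poisson bracket $\{f,g\}([\rho])$ forces $(df)_{[\rho]}$ and $(dg)_{[\rho]}$ to be linearly independent, since $df = c\,dg$ at a point yields $X_f = c X_g$ there by non-degeneracy of $\omega_\mathcal{G}$, whence $\omega_\mathcal{G}(X_f, X_g) = c\,\omega_\mathcal{G}(X_g, X_g) = 0$.

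When $(d\beta)_{[\rho]} \neq 0$, write $(\overline\beta, \overline\gamma)$ for the action-angle coordinates at $[\rho]$. Lemma~\ref{lem:poisson-bracket-vanish-iff-gamma-takes-some-values} identifies the zero locus of $\{\beta,\delta\}$ with $\overline\gamma \in \{0,\pi\}$ and that of $\{\beta,\varepsilon\}$ with $\overline\gamma \in \{\overline\beta/2, \overline\beta/2 - \pi\}$. Since $\overline\beta \in (0, 2\pi)$, these two subsets of $\R/2\pi\Z$ are disjoint, so at least one bracket is nonzero at $[\rho]$ and $d\beta$ pairs with one of $d\delta, d\varepsilon$ to span $T^*_{[\rho]}\RepDT$. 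When instead $(d\beta)_{[\rho]} = 0$, the vector field $X_\beta$ vanishes at $[\rho]$, so $\{\delta,\beta\}([\rho]) = \{\varepsilon,\beta\}([\rho]) = 0$ automatically. The $\mathcal{D}$-analogue of the Lemma recorded in Remark~\ref{rem:poisson-bracket-vanish-iff-gamma-takes-some-values} then gives that $(d\delta)_{[\rho]} \neq 0$ implies $\{\delta,\varepsilon\}([\rho]) \neq 0$, so $d\delta, d\varepsilon$ are linearly independent and span. It therefore only remains to exclude the simultaneous vanishing $(d\beta)_{[\rho]} = (d\delta)_{[\rho]} = 0$.

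For this final step, Fact~\ref{fact:characterization-fixed-point-Dehn-twist} translates $(d\beta)_{[\rho]}=0$ into the degeneration of one triangle of the $\mathcal{B}$-chain. Since a hyperbolic triangle with positive interior angles and zero area must collapse to a single point, this yields $C_1 = C_2$ or, symmetrically, $C_3 = C_4$. Analogously $(d\delta)_{[\rho]}=0$ collapses a $\mathcal{D}$-triangle $(C_2, C_3, D)$ or $(D, C_4, C_1)$, forcing $C_2 = C_3$ or $C_4 = C_1$. In every combination, three of the four exterior fixed points $C_1, C_2, C_3, C_4$ coincide, so the three corresponding elements $\rho(c_i)$ lie in the abelian stabilizer in $\psl$ of that common point. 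The presentation relation $c_1 c_2 c_3 c_4 = 1$ then places the remaining $\rho(c_j)$ in the same stabilizer, making the image of $\rho$ abelian and contradicting the Zariski density of DT representations recalled in Section~\ref{sec:dt-representations-origin}. The main obstacle is precisely this concluding step: the Poisson-bracket calculus of Lemma~\ref{lem:poisson-bracket-vanish-iff-gamma-takes-some-values} and Remark~\ref{rem:poisson-bracket-vanish-iff-gamma-takes-some-values} cannot by itself exclude this double critical configuration, and one must exit the purely symplectic framework and invoke a representation-theoretic property of DT representations via the triangle-chain geometry.
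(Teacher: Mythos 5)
Your proof is correct and follows essentially the same route as the paper: both rest on Lemma~\ref{lem:poisson-bracket-vanish-iff-gamma-takes-some-values}, Remark~\ref{rem:poisson-bracket-vanish-iff-gamma-takes-some-values}, and Fact~\ref{fact:characterization-fixed-point-Dehn-twist}, and both reduce to excluding the simultaneous vanishing $(d\beta)_{[\rho]}=(d\delta)_{[\rho]}=0$ after observing that three of the four exterior vertices would then coincide. The only genuine divergence is how that last configuration is ruled out. You exit to representation theory: three of the $\rho(c_i)$ share a fixed point, hence all four lie in the abelian stabilizer of that point, contradicting Zariski density of DT representations. This works, but it is not forced; the paper stays entirely inside the triangle-chain model and notes that three coinciding vertices make \emph{both} triangles of the $\mathcal{B}$-chain degenerate, which is impossible because the total area of the chain equals $\lambda/2>0$ (Section~\ref{sec:parametrization-by-triangle-chains}). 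So your closing claim that ``one must exit the purely symplectic framework'' is inaccurate — the area constraint already does the job — though your alternative is a perfectly valid, and arguably more robust, way to close the argument.
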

\begin{proof}
We first prove that the three Poisson brackets $\{\beta,\delta\}$, $\{\beta,\varepsilon\}$, and $\{\delta,\varepsilon\}$ can never vanish simultaneously. If there were a point $[\rho]\in\RepDT$ at which all three brackets vanished, then $(d\beta)_{[\rho]}=(d\delta)_{[\rho]}=0$ as a consequence of Lemma~\ref{lem:poisson-bracket-vanish-iff-gamma-takes-some-values} and Remark~\ref{rem:poisson-bracket-vanish-iff-gamma-takes-some-values}.

In terms of the $\mathcal{B}$-triangle chain of $[\rho]$, $(d\beta)_{[\rho]}$ vanishes if and only if $C_1=C_2$ or $C_3=C_4$ by Fact~\ref{fact:characterization-fixed-point-Dehn-twist}. Similarly, in terms of the $\mathcal{D}$-triangle chain of $[\rho]$, $(d\delta)_{[\rho]}$ vanishes if and only if $C_2=C_3$ or $C_4=C_1$ for the same reason. Thus, whenever both $(d\beta)_{[\rho]}$ and $(d\delta)_{[\rho]}$ vanish, three out of the four vertices $\{C_1, C_2, C_3, C_4\}$ must coincide. In particular, both triangles in the $\mathcal B$-triangle chain of $[\rho]$ would be degenerate and this is impossible. As a consequence, we conclude that $d\beta$ and $d\delta$ cannot both vanish at the same point in $\RepDT$. 

So, for an arbitrary point $[\rho]\in\RepDT$, one of the real numbers $\{\beta,\delta\}([\rho])$, $\{\beta,\varepsilon\}([\rho])$, or $\{\delta,\varepsilon\}([\rho])$ is non-zero. Let us assume first that $\{\beta,\delta\}([\rho])\neq 0$. Then, by definition of the Poisson bracket, we get that $\omega_{\mathcal G}(X_b([\rho]),X_d([\rho]))\neq 0$. This implies that the tangent space of $\RepDT$ at $[\rho]$ is generated by $X_b([\rho])$ and $X_d([\rho])$ because it is a 2-dimensional vector space. Equivalently, the cotangent space at $[\rho]$ is generated by $(d\beta)_{[\rho]}$ and $(d\delta)_{[\rho]}$. In the same fashion, if $\{\beta,\varepsilon\}([\rho])\neq 0$, then we would similarly obtain that the cotangent space at $[\rho]$ is generated by $(d\beta)_{[\rho]}$ and $(d\varepsilon)_{[\rho]}$ and if $\{\delta,\varepsilon\}([\rho])\neq 0$, then it is generated by $(d\delta)_{[\rho]}$ and $(d\varepsilon)_{[\rho]}$.
\end{proof}

\begin{rmk}
If we study the proof of Corollary~\ref{cor:cotangent-space-generated-by-beta-delta-epsilon-n=4} carefully, we shall learn which pair of differentials $\{d\beta,d\delta,d\varepsilon\}$ form a basis of the cotangent at every point. We would then obtain the following three possibilities:
\begin{align*}
    (d\beta)_{[\rho]}\neq 0 \text{ and } \gamma([\rho])\notin\{0,\pi\} &\quad\Longrightarrow\quad T^\ast_{[\rho]} \RepDT=\big\langle (d\beta)_{[\rho]}, (d\delta)_{[\rho]} \big\rangle,\\
    (d\beta)_{[\rho]}\neq 0 \text{ and } \gamma([\rho])\in\{0,\pi\} &\quad\Longrightarrow\quad T^\ast_{[\rho]} \RepDT=\big\langle (d\beta)_{[\rho]}, (d\varepsilon)_{[\rho]} \big\rangle,\\
    (d\beta)_{[\rho]}= 0 & \quad \Longrightarrow\quad T^\ast_{[\rho]} \RepDT=\big\langle (d\delta)_{[\rho]}, (d\varepsilon)_{[\rho]} \big\rangle.
\end{align*}
\end{rmk}

We can infer yet another consequence from the trigonometric computations in the proof of Lemma~\ref{lem:poisson-bracket-vanish-iff-gamma-takes-some-values}. It says that each pair of angle functions $(\beta,\delta)$ and $(\beta,\varepsilon)$ essentially parametrizes $\RepDT$ (up to finite redundancy).

\begin{cor}\label{cor:finite-intersection-b-orbits-and-d-orbits-n=4}
When $\surface$ is a 4-punctured sphere, a $b$-orbit intersects a $d$-orbit or an $e$-orbit in at most two points. In other words, if $\zeta\in\{\delta,\varepsilon\}$, then the map 
\[
(\beta,\zeta)\colon\RepDT\to (0,2\pi)^2
\]
is at most two-to-one.
\end{cor}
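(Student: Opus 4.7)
The plan is to exploit the trigonometric formulas already produced in the proof of Lemma~\ref{lem:poisson-bracket-vanish-iff-gamma-takes-some-values}: the relations~\eqref{eq:delta-in-terms-of-gamma} and~\eqref{eq:epsilon-in-terms-of-gamma} express $\cos(\overline{\delta}/2)$ and $\cos(\overline{\varepsilon}/2)$ as affine functions of $\cos(\overline{\gamma})$ and $\cos(\overline{\beta}/2-\overline{\gamma})$ respectively, with coefficients $k_1, k_2, k_1', k_2'$ that depend only on $\alpha$ and on the distances from the exterior vertices to the shared vertex $B$ of the $\mathcal{B}$-triangle chain. Since these distances are constant along a $b$-orbit (only the dihedral angle $\gamma$ between the two triangles of the chain varies), all four coefficients are constant on each $b$-orbit.

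To turn this observation into a counting statement, I would first fix $\overline{\beta}\in(0,2\pi)$ and analyze the fiber $\beta^{-1}(\overline{\beta})$. If $\overline{\beta}$ is a critical value of $\beta$, equivalent by Fact~\ref{fact:characterization-fixed-point-Dehn-twist} to one of $\mu_0$ or $\mu_1$ vanishing on the fiber, then the fiber reduces to a single point and the map $(\beta,\zeta)$ is trivially at most one-to-one over such values. Otherwise the fiber is a single regular $b$-orbit, and hence an embedded circle of length $\pi$ by Section~\ref{sec:DT-ham-dynamics}. The identity $\tau_b=\Phi_b^{\beta/2}$ from~\eqref{eq:goldman-trick} together with the rule~\eqref{eq:action-of-tau_b_i} for the action of $\tau_b$ on $\gamma$ shows that $\Phi_b^t$ shifts $\gamma$ by $2t$, so the angle $\overline{\gamma}\in\R/2\pi\Z$ parametrizes this circle bijectively. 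On this regular fiber, $d\beta$ is non-zero, hence $k_1,k_1'\neq 0$ by the same argument used in the proof of Lemma~\ref{lem:poisson-bracket-vanish-iff-gamma-takes-some-values}.

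Prescribing $\overline{\delta}$ (respectively $\overline{\varepsilon}$) then forces $\cos(\overline{\gamma})$ (respectively $\cos(\overline{\beta}/2-\overline{\gamma})$) to equal one specific real number, and each such cosine equation admits at most two solutions $\overline{\gamma}\in\R/2\pi\Z$. This yields at most two preimages of $(\overline{\beta},\overline{\zeta})$ under $(\beta,\zeta)$, proving the corollary. I do not anticipate a serious technical obstacle: the only care needed is to separate the critical fibers of $\beta$, where $\gamma$ is not well defined, from the regular ones where the trigonometric identity applies.
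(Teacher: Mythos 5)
Your proposal is correct and follows essentially the same route as the paper: fix a level set of $\beta$, dispose of the critical (point) fibers, parametrize the regular circle fiber by $\gamma$, and observe that the affine-in-cosine relations~\eqref{eq:delta-in-terms-of-gamma} and~\eqref{eq:epsilon-in-terms-of-gamma} admit at most two solutions in $\gamma$ once $\overline{\delta}$ or $\overline{\varepsilon}$ is prescribed. Your explicit check that $k_1,k_1'\neq 0$ on regular fibers is a detail the paper leaves implicit, but it is the same argument.
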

\begin{proof}
Let us fix a $b$-orbit, or in other words, let us fix a level set of the function $\beta$. If the $b$-orbit is a single point, the conclusion is immediate. Assume now that the $b$-orbit is a circle. In that case, it is parametrized by the angle coordinate $\gamma$. If we look again at the system of trigonometric equations~\eqref{eq:delta-in-terms-of-gamma}, we see that once the value of $\delta$ is prescribed, there are at most two values of $\gamma$ that solves the system (one being $2\pi$ minus the other). This shows that a $b$-orbit intersects a $d$-orbit in at most two points. If we instead combine~\eqref{eq:trig-expression-for-epsilon-n=4} and~\eqref{eq:trig-expression-for-gamma-n=4-(case-epsilon)}, we can write a similar argument to show that a $b$-orbit intersects an $e$-orbit in at most two points.
\end{proof}

\begin{rmk}
An alternative argument (of algebraic flavor) to prove Corollary~\ref{cor:finite-intersection-b-orbits-and-d-orbits-n=4} can be found in~\cite[Fact~4.3]{arnaud-sam}.
\end{rmk}

\subsection{General case}\label{sec:general-case}
We proceed with the general case where the number of punctures $n$ on $\surface$ is arbitrarily large. We fix a geometric presentation of $\pi_1\surface$ with generators $c_1,\ldots,c_n$ and consider its standard pants decomposition $\mathcal{B}$. When $\surface$ has five punctures or more, we shall only provide a family of simple closed curves whose angle functions generate the cotangent space at every point of $\IntRepDT{\mathcal{B}}(\surface)$. Recall from Section~\ref{sec:moment-map} that $\IntRepDT{\mathcal{B}}(\surface)$ is the open and dense subset of $\RepDT$ that consists of all regular fibers of the moment map associated to the pants decomposition $\mathcal{B}$. The simple closed curves that we want to consider are indexed by $i=1,\ldots,n-3$:
\[
b_i=(c_1\cdots c_{i+1})^{-1},\quad d_i=(c_{i+1}c_{i+2})^{-1},\quad e_i=c_{i+2}^{-1}b_{i-1}=(c_1\cdots c_{i}c_{i+2})^{-1}.
\]
\begin{center}
\vspace{2mm}
\begin{tikzpicture}[scale=1.1, decoration={
    markings,
    mark=at position 0.6 with {\arrow{>}}}]
  \draw[postaction={decorate}] (0,-.5) arc(-90:-270: .25 and .5) node[midway, left]{$c_1$};
   \draw[black!40] (0,.5) arc(90:-90: .25 and .5);
   \draw[apricot] (2,.5) arc(90:270: .25 and .5) node[near end, above left]{$b_1$};
   \draw[lightapricot] (2,.5) arc(90:-90: .25 and .5);
   \draw[apricot] (4,.5) arc(90:270: .25 and .5) node[near end, above left]{$b_2$};
   \draw[lightapricot] (4,.5) arc(90:-90: .25 and .5);
   \draw[apricot] (6,.5) arc(90:270: .25 and .5) node[near end, above left]{$b_3$};
   \draw[lightapricot] (6,.5) arc(90:-90: .25 and .5);
  \draw[apricot] (8,.5) arc(90:270: .25 and .5);
   \draw[apricot] (8,.5) arc(90:-90: .25 and .5);
   \draw (8.7, 0) node{$\ldots$};
  
   \draw (.5,1) arc(180:0: .5 and .25) node[midway, above]{$c_2$};
  \draw[postaction={decorate}] (.5,1) arc(-180:0: .5 and .25);
   \draw (2.5,1) arc(180:0: .5 and .25)node[midway, above]{$c_3$};
   \draw[postaction={decorate}] (2.5,1) arc(-180:0: .5 and .25);
   \draw (4.5,1) arc(180:0: .5 and .25)node[midway, above]{$c_4$};
   \draw[postaction={decorate}] (4.5,1) arc(-180:0: .5 and .25);
   \draw (6.5,1) arc(180:0: .5 and .25)node[midway, above]{$c_5$};
   \draw[postaction={decorate}] (6.5,1) arc(-180:0: .5 and .25);

   \draw[plum] (3.58,.73) arc(0:180:1.58 and -.5) node[near end, below]{$d_1$};
   \draw[lightplum] (3.58,.73) arc(-50:-130:2.46 and 1.25);
   \draw[plum] (5.58,.73) arc(0:180:1.58 and -.5) node[near start, below left]{$d_2$};
   \draw[lightplum] (5.58,.73) arc(-50:-130:2.46 and 1.25);
   \draw[plum] (7.58,.73) arc(0:180:1.58 and -.5) node[near start, below]{$d_3$};
   \draw[lightplum] (7.58,.73) arc(-50:-130:2.46 and 1.25);
   
   \draw (0,.5) to[out=0,in=-90] (.5,1);
   \draw (1.5,1) to[out=-90,in=180] (2,.5);
   \draw (0,-.5) to[out=0,in=180] (2,-.5);
  
   \draw (2,.5) to[out=0,in=-90] (2.5,1);
   \draw (3.5,1) to[out=-90,in=180] (4,.5);
   \draw (2,-.5) to[out=0,in=180] (4,-.5);
  
   \draw (4,.5) to[out=0,in=-90] (4.5,1);
   \draw (5.5,1) to[out=-90,in=180] (6,.5);
   \draw (4,-.5) to[out=0,in=180] (6,-.5);
  
   \draw (6,.5) to[out=0,in=-90] (6.5,1);
   \draw (7.5,1) to[out=-90,in=180] (8,.5);
   \draw (6,-.5) to[out=0,in=180] (8,-.5);
 \end{tikzpicture}
 \vspace{2mm}
 \end{center}
 \begin{center}
 \vspace{2mm}
 \begin{tikzpicture}[scale=1.1, decoration={
     markings,
     mark=at position 0.6 with {\arrow{>}}}]
   \draw[postaction={decorate}] (0,-.5) arc(-90:-270: .25 and .5) node[midway, left]{$c_1$};
   \draw[black!40] (0,.5) arc(90:-90: .25 and .5);
   \draw[black] (8,.5) arc(90:270: .25 and .5);
   \draw[black] (8,.5) arc(90:-90: .25 and .5);
   \draw (8.7, 0) node{$\ldots$};
  
   \draw[lightmauve] (3.57,.73) arc(170:10: -.58 and .25);
   \draw[lightmauve] (.2,.545) edge[out=-10,in=160,-] (.8,-.5);
   \draw[mauve] (.2,.545) arc(-150:-10: 1.2 and .6);
   \draw[mauve] (3.57,.73) edge[out=270,in=20,-] node[near end, above right]{$e_1$} (.8,-.5);
   \draw[lightmauve] (5.57,.73) arc(170:10: -.58 and .25);
   \draw[lightmauve] (2.2,.545) edge[out=-10,in=160,-] (2.8,-.5);
   \draw[mauve] (2.2,.545) arc(-150:-10: 1.2 and .6);
   \draw[mauve] (5.57,.73) edge[out=270,in=20,-] node[near end, above right]{$e_2$} (2.8,-.5);
   \draw[lightmauve] (7.57,.73) arc(170:10: -.58 and .25);
   \draw[lightmauve] (4.2,.545) edge[out=-10,in=160,-] (4.8,-.5);
   \draw[mauve] (4.2,.545) arc(-150:-10: 1.2 and .6);
   \draw[mauve] (7.57,.73) edge[out=270,in=20,-] node[midway, above]{$e_3$} (4.8,-.5);

   \draw (.5,1) arc(180:0: .5 and .25) node[midway, above]{$c_2$};
   \draw[postaction={decorate}] (.5,1) arc(-180:0: .5 and .25);
   \draw (2.5,1) arc(180:0: .5 and .25)node[midway, above]{$c_3$};
   \draw[postaction={decorate}] (2.5,1) arc(-180:0: .5 and .25);
   \draw (4.5,1) arc(180:0: .5 and .25)node[midway, above]{$c_4$};
   \draw[postaction={decorate}] (4.5,1) arc(-180:0: .5 and .25);
   \draw (6.5,1) arc(180:0: .5 and .25)node[midway, above]{$c_5$};
   \draw[postaction={decorate}] (6.5,1) arc(-180:0: .5 and .25);
   
   \draw (0,.5) to[out=0,in=-90] (.5,1);
   \draw (1.5,1) to[out=-90,in=180] (2,.5);
   \draw (0,-.5) to[out=0,in=180] (2,-.5);
  
   \draw (2,.5) to[out=0,in=-90] (2.5,1);
   \draw (3.5,1) to[out=-90,in=180] (4,.5);
   \draw (2,-.5) to[out=0,in=180] (4,-.5);
  
   \draw (4,.5) to[out=0,in=-90] (4.5,1);
   \draw (5.5,1) to[out=-90,in=180] (6,.5);
   \draw (4,-.5) to[out=0,in=180] (6,-.5);
  
   \draw (6,.5) to[out=0,in=-90] (6.5,1);
   \draw (7.5,1) to[out=-90,in=180] (8,.5);
   \draw (6,-.5) to[out=0,in=180] (8,-.5);
 \end{tikzpicture}
 \vspace{2mm}
 \end{center}
The fundamental group elements $b_1,\ldots, b_{n-3}$ are the ones defining the standard pants decomposition~$\mathcal{B}$. The associated action-angle coordinates introduced in Section~\ref{sec:parametrization-by-triangle-chains} are $(\beta_1,\ldots,\beta_{n-3},\gamma_1,\ldots,\gamma_{n-3})$. The curves $d_i$ and $e_i$ intersect $b_i$ twice and are disjoint from all other curves $b_j$ for $j\neq i$. It will be convenient to abbreviate the corresponding angle functions by $\delta_i=\vartheta_{d_i}$ and $\varepsilon_i=\vartheta_{e_i}$.

We start by characterizing the points of $\IntRepDT{\mathcal{B}}(\surface)$ at which the Poisson brackets $\{\beta_i,\delta_i\}$ and $\{\beta_i,\varepsilon_i\}$ vanish. We shall express such a condition in terms of the action-angle coordinates associated to~$\mathcal{B}$.
\begin{lem}\label{lem:poisson-bracket-vanish-iff-gamma-takes-some-values-general-case}
If $[\rho]\in\IntRepDT{\mathcal{B}}(\surface)$ has coordinates $(\overline\beta_1,\ldots,\overline\beta_{n-3},\overline\gamma_1,\ldots,\overline\gamma_{n-3})$, then for every $i=1,\ldots,n-3$ it holds that
\begin{align*}
    \{\beta_i,\delta_i\}([\rho])=0\quad &\Longleftrightarrow \quad \overline\gamma_i\in \{0,\pi\}\\
   \{\beta_i,\varepsilon_i\}([\rho])=0\quad &\Longleftrightarrow \quad \overline\gamma_i\in \{\overline\beta_i/2,\overline\beta_i/2-\pi\}.
\end{align*}
In particular, since $\overline{\beta}_i\in (0,2\pi)$, $\{\beta_i,\delta_i\}$ and $\{\beta_i,\varepsilon_i\}$ cannot simultaneously vanish on $\IntRepDT{\mathcal{B}}(\surface)$.
\end{lem}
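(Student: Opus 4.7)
The plan is to adapt the strategy of Lemma~\ref{lem:poisson-bracket-vanish-iff-gamma-takes-some-values} to the pair of triangles of the $\mathcal{B}$-triangle chain meeting at the shared vertex $B_i$. Since $[\rho]\in\IntRepDT{\mathcal{B}}(\surface)$, all triangles in its chain are non-degenerate. First I would observe that along the Hamiltonian flow $\Phi_{b_i}^t$, only the angle coordinate $\gamma_i$ varies (compare~\eqref{eq:action-of-tau_b_i}), which geometrically corresponds to a rigid rotation of the right sub-chain consisting of the triangles from position $i+1$ to $n-2$ around $B_i$, while the left sub-chain from position $1$ to $i$ stays fixed. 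In particular, the vertices $B_{i-1}$, $C_{i+1}$ and $B_i$ remain fixed along the $b_i$-orbit of $[\rho]$ (with the convention $B_0=C_1$), the hyperbolic distances $d(C_{i+2},B_i)$ and $d(B_{i-1},B_i)$ are preserved, and $\overline\gamma_i$ is the interior angle at $B_i$ in the virtual triangle $(C_{i+1},C_{i+2},B_i)$.

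For the bracket with $\delta_i$, I would apply the hyperbolic law of cosines to the triangle $(C_{i+1},C_{i+2},B_i)$ to express $\cosh(d(C_{i+1},C_{i+2}))$ as an affine function of $\cos(\overline\gamma_i)$ whose leading coefficient $\sinh(d(C_{i+1},B_i))\sinh(d(C_{i+2},B_i))$ is strictly positive by non-degeneracy. Combining this with the law of cosines applied to the triangle $(C_{i+1},C_{i+2},D_i)$---whose interior angles are $\pi-\alpha_{i+1}/2,\pi-\alpha_{i+2}/2,\pi-\overline\delta_i/2$---produces an identity
\[
\cos(\overline\delta_i/2)=k_1\cos(\overline\gamma_i)+k_2
\]
with $k_1,k_2$ constant along the $b_i$-orbit and $k_1\neq 0$. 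Differentiating along the orbit exactly as in~\eqref{eq:poisson-bracket-beta-delta} recovers $\{\beta_i,\delta_i\}([\rho])\propto \sin(\overline\gamma_i)$ and yields the first equivalence. The analogous computation performed in the virtual triangle $(B_{i-1},C_{i+2},B_i)$---whose interior angle at $B_i$ is $\pi-(\overline\beta_i/2-\overline\gamma_i)$ by the same angular decomposition as in Lemma~\ref{lem:poisson-bracket-vanish-iff-gamma-takes-some-values}---combined with the law of cosines in the triangle $(B_{i-1},C_{i+2},E_i)$ yields
\[
\cos(\overline\varepsilon_i/2)=k_1'\cos(\overline\beta_i/2-\overline\gamma_i)+k_2',
\]
from which the second equivalence follows by an identical differentiation.

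The main obstacle will be confirming that the interior angle at $B_{i-1}$ in the triangle $(B_{i-1},C_{i+2},E_i)$ behaves exactly as it does in the $n=4$ case. This amounts to identifying the correct lift in $(0,2\pi)$ of the rotation angle of $\rho(b_{i-1}^{-1})$ entering the relation $e_i\cdot b_{i-1}^{-1}\cdot c_{i+2}=1$ (with the convention $B_0=C_1$ and $\beta_0=2\pi-\alpha_1$ for the boundary case $i=1$), which requires tracking the orientation of the chain carefully. Once these conventions are fixed, the trigonometric manipulations of Lemma~\ref{lem:poisson-bracket-vanish-iff-gamma-takes-some-values} transfer verbatim. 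The concluding non-simultaneity statement is then immediate, since $\overline\beta_i\in(0,2\pi)$ places $\overline\beta_i/2\in(0,\pi)$ and $\overline\beta_i/2-\pi\in(-\pi,0)$, so the solution sets $\{0,\pi\}$ and $\{\overline\beta_i/2,\overline\beta_i/2-\pi\}$ are disjoint.
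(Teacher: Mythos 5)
Your proposal is correct and follows essentially the same route as the paper: reduce to the two triangles $(C_{i+1},C_{i+2},D_i)$ and $(B_{i-1},C_{i+2},E_i)$, eliminate the common side length via the hyperbolic law of cosines against the virtual triangles at $B_i$, and differentiate the resulting affine relations in $\cos(\overline\gamma_i)$ and $\cos(\overline\beta_i/2-\overline\gamma_i)$ along the $b_i$-orbit. The one point you flag as an obstacle is resolved exactly as you suggest: the relation $e_i\,b_{i-1}^{-1}\,c_{i+2}=1$ (equivalently, the fact that the $\mathcal{B}$- and $\mathcal{E}_i$-chains share their first $i-1$ triangles) gives interior angle $\overline\beta_{i-1}/2$ at $B_{i-1}$, which is constant along $b_i$-orbits, so the $n=4$ computation transfers verbatim.
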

\begin{proof}
We shall work with the $\mathcal{B}$-triangle chain of $[\rho]$ and, since we are assuming that $[\rho]$ lies in $\IntRepDT{\mathcal{B}}(\surface)$, all the triangles in the chain of $[\rho]$ are non-degenerate. Unlike the situation in Lemma~\ref{lem:poisson-bracket-vanish-iff-gamma-takes-some-values}, the curves $d_i$ and $e_i$ are not pants decompositions on their own. We would like to complete each of them into pants decompositions $\mathcal{D}_i$ and $\mathcal{E}_i$ of $\surface$. We can take the pants decomposition $\mathcal{D}_i$  to be the standard pants decomposition associated to the geometric generators 
\[
(c_{i+1},c_{i+2},\ldots,c_n,c_1,\ldots,c_{i}).
\]
The first triangle in the $\mathcal{D}_i$-triangle chain of $[\rho]$ has vertices $(C_{i+1},C_{i+2},D_i)$ and interior angles $(\pi-\alpha_{i+1}/2, \pi-\alpha_{i+2}/2, \pi-\overline\delta_i/2)$, where $D_i$ is the fixed point of $\rho(d_i)$ and $\overline\delta_i=\delta_i([\rho])$. It can be degenerate and this happens exactly when $C_{i+1}=C_{i+2}$, which is only possible if $\overline{\gamma}_i=0$. In that case, $(d\delta_i)_{[\rho]}=0$ by Fact~\ref{fact:characterization-fixed-point-Dehn-twist} and thus $\{\beta_i,\delta_i\}([\rho])=0$.

Similarly, the pants decomposition $\mathcal{E}_i$ will be taken to be the standard pants decomposition associated to the geometric generators 
\[
(c_1,\ldots,c_i,c_{i+2},c_{i+3},\ldots, c_{n}, (c_{i+2}\cdots c_{n})^{-1}c_{i+1}(c_{i+2}\cdots c_{n})).
\]
The $i^{th}$ triangle in the $\mathcal{E}_i$-triangle chain of $[\rho]$ has vertices $(B_{i-1}, C_{i+2}, E_i)$, where $E_i$ is the fixed point of $\rho(e_i)$ (we are adopting the convention that $B_0=C_1$). It is degenerate if and only if $B_{i-1}=C_{i+2}$, which implies $\overline{\gamma}_i=\overline{\beta}_i/2-\pi$. In that case, $\{\beta_i,\varepsilon_i\}([\rho])=0$ because $(d\varepsilon_i)_{[\rho]}=0$ by Fact~\ref{fact:characterization-fixed-point-Dehn-twist}. Otherwise, the triangle $(B_{i-1}, C_{i+2}, E_i)$ is non-degenerate and the interior angle at $B_{i-1}$ coincides with the interior angle at $B_{i-1}$ in the triangle $(B_{i-1}, C_{i+1},B_i)$ from the $\mathcal{B}$-triangle chain of $[\rho]$. This is because the $\mathcal{B}$-triangle chain and $\mathcal{E}_i$-triangle chain of $[\rho]$ have the same first $i-1$ triangles. It is thus equal to $\overline\beta_{i-1}/2$ (with the convention that $\overline\beta_0=2\pi-\alpha_1$). The other two angles are $\pi-\alpha_{i+2}/2$ and $\pi-\overline\varepsilon_i/2$, with $\overline\varepsilon_i=\varepsilon_i([\rho])$.
\begin{center}
\begin{tikzpicture}[font=\sffamily, scale=.9]    
\node[anchor=south west,inner sep=0] at (0,0) {\includegraphics[width=5.4cm]{fig/fig-triangle-chain}};

\begin{scope}
\fill (2.93,.25) circle (0.07) node[left]{$B_{i+1}$};
\fill (0.23,2.65) circle (0.07) node[left]{$B_{i-1}$};
\fill (4.88,2.35) circle (0.07) node[right]{$C_{i+2}$};
\fill (5.75,4.3) circle (0.07) node[right]{$C_{i+1}$};
\fill (2.25,1.9) circle (0.07) node[below left]{$B_i$};
\end{scope}

\begin{scope}
\draw[mauve] (4.88,2.35) to[bend left=15]  (5.75,4.3);
\fill[mauve] (3.7,3.1) circle (0.07) node[below]{$D_i$};
\draw[mauve] (4.88,2.35) to[bend right=15]  (3.7,3.1);
\draw[mauve] (5.75,4.3) to[bend right=15]  (3.7,3.1);
\end{scope}

\draw[mauve, thick] (5.5,4) arc (240:215:.6) node[near start, below right]{\small $\pi-\alpha_{i+1}/2$};
\draw[sky, thick] (2.9,2.2) arc (20:50:.6);
\draw[sky] (3.15,2.52) node{\small $\overline\gamma_i$};
\draw[mauve, thick] (5,2.9) arc (80:135:.6) node[at start, right]{\small $\pi-\alpha_{i+2}/2$};
\draw[mauve, thick] (4.1,2.95) arc (-20:38:.5) node[at end, above left]{\small $\pi-\overline\delta_i/2$};
\end{tikzpicture}
\begin{tikzpicture}[font=\sffamily, scale=.9]    
\node[anchor=south west,inner sep=0] at (0,0) {\includegraphics[width=5.4cm]{fig/fig-triangle-chain}};

\begin{scope}
\fill (2.93,.25) circle (0.07) node[left]{$B_{i+1}$};
\fill (0.23,2.65) circle (0.07) node[left]{$B_{i-1}$};
\fill (4.88,2.35) circle (0.07) node[right]{$C_{i+2}$};
\fill (5.75,4.3) circle (0.07) node[right]{$C_{i+1}$};
\fill (2.25,1.9) circle (0.07);
\draw (2.3,1.7) node[left]{$B_i$};
\end{scope}

\begin{scope}
\draw[plum] (4.88,2.35) to[bend right=20]  (0.23,2.65);
\fill[plum] (1.4,1) circle (0.07) node[below]{$E_i$};
\draw[plum] (4.88,2.35) to[bend right=15]  (1.4,1);
\draw[plum] (0.23,2.65) to[bend left=10]  (1.4,1);
\end{scope}

\draw[apricot, thick] (2.5,2.25) arc (40:155:.4) node[near end, above]{\small $\pi-\overline\beta_i/2$};
\draw[plum, thick] (0.72,2.8) arc (20:-40:.6) node[near end, below left]{\small $\beta_{i-1}/2$};
\draw[sky, thick] (2.9,2.2) arc (20:50:.6);
\draw[sky] (3.15,2.52) node{\small $\overline\gamma_i$};
\draw[plum, thick] (4.1,2.25) arc (200:160:.6) node[near end, above right]{\small $\pi-\alpha_{i+2}/2$};
\draw[plum, thick] (1.8,1.3) arc (30:115:.5) node[at end, below left]{\small $\pi-\overline\varepsilon_i/2$};
\end{tikzpicture}
\end{center}
Once we observed that $\beta_{i-1}$ is constant along $b_i$-orbits, we can apply the same trigonometric computations as in the proof of Lemma~\ref{lem:poisson-bracket-vanish-iff-gamma-takes-some-values} to the triangles shown in the pictures above to obtain relations of the kind
\begin{equation}\label{eq:delta_i-and-varepsilon_i-in-terms-of-gamma_i}
\cos(\overline\delta_i/2)=\cos(\overline{\gamma}_i)\cdot k_1+k_2, \quad \cos(\overline\varepsilon_i/2)=\cos(\overline{\beta}_i/2-\overline\gamma_i)\cdot k_1'+k_2',
\end{equation}
where $k_1,k_2,k_1',k_2'$ are constant along $b_i$-orbits. The conclusion follows as in Lemma~\ref{lem:poisson-bracket-vanish-iff-gamma-takes-some-values}. 
\end{proof}

\begin{rmk}
Lemma~\ref{lem:poisson-bracket-vanish-iff-gamma-takes-some-values-general-case} is a more precise version of~\cite[Lemma~3.3]{maret-ergodicity}. It not only says that the Poisson bracket $\{\beta_i,\delta_i\}$ vanishes at most two points along each $b_i$-orbit, it also gives the precise value of the angle coordinates $\gamma_i$ at which this happens.
\end{rmk}

\begin{rmk}
The observation made in Remark~\ref{rem:correspondance-RP1-zero-locus-Poisson-bracket} extends as follows. The set of points in $\IntRepDT{\mathcal B}$ where all the Poisson brackets $\{\beta_i,\delta_i\}$ vanish for every $i=1,\ldots,n-3$ is mapped inside the real Lagrangian submanifold $\mathbb{RP}^{n-3}\subset \CP^{n-3}$ by the symplectomorphism of~\cite[Section~4]{action-angle}.
\end{rmk}

\begin{cor}\label{cor:cotangent-space-generated-by-beta_i-delta_i-epsilon_i}
The cotangent space to $\RepDT$ at every point of $\IntRepDT{\mathcal{B}}(\surface)$ is generated by the differentials $d\beta_i$, $d\delta_i$, and $d\varepsilon_i$:
\[
T^\ast \IntRepDT{\mathcal{B}}(\surface)=\big\langle d\beta_i, d\delta_i, d\varepsilon_i \,\vert\, i=1,\ldots,n-3\big\rangle.
\]
\end{cor}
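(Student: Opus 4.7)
The plan is to extract, at each $[\rho] \in \IntRepDT{\mathcal B}(\surface)$, a basis of $T^\ast_{[\rho]}\RepDT$ from the $3(n-3)$ given differentials; since $\dim\RepDT = 2(n-3)$, I will select exactly $2(n-3)$ of them. Specifically, I will take the $n-3$ action differentials $d\beta_i$---which are linearly independent on $\IntRepDT{\mathcal B}(\surface)$ by regularity of the moment map $\mu$---together with, for each $i$, a choice $\zeta_i \in \{\delta_i,\varepsilon_i\}$ such that $\{\beta_i,\zeta_i\}([\rho]) \ne 0$. Such a choice exists at every point by Lemma~\ref{lem:poisson-bracket-vanish-iff-gamma-takes-some-values-general-case}.

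The key topological input I need is that for $j \ne i$, the curve $b_j$ is disjoint from both $d_i$ and $e_i$ on $\surface$. This can be read directly from the figures defining these curves: $b_j$ bounds a disk containing the punctures $\{1,\ldots,j+1\}$, while $d_i$ bounds $\{i+1,i+2\}$ and $e_i$ bounds $\{1,\ldots,i,i+2\}$. A quick case analysis on the position of $j+1$ relative to $i+1,i+2$ shows that the two puncture subsets are always either nested or disjoint, so the corresponding simple closed curves can be isotoped apart. Invoking the commutation of Hamiltonian flows for disjoint simple closed curves recalled in Section~\ref{sec:DT-ham-dynamics}, I obtain $\{\beta_j,\delta_i\} = \{\beta_j,\varepsilon_i\} = 0$ whenever $j \ne i$, and in particular $\{\beta_j,\zeta_i\}([\rho]) = 0$. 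The same principle, applied to the pairwise disjoint pants curves, yields $\{\beta_i,\beta_j\} \equiv 0$ for all $i,j$.

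To conclude, I will assume a linear relation
\[
\sum_{i=1}^{n-3} a_i\, (d\beta_i)_{[\rho]} + \sum_{i=1}^{n-3} b_i\, (d\zeta_i)_{[\rho]} = 0
\]
and pair it with each Hamiltonian vector field $X_{b_j}([\rho])$. Using $df(X_g) = \{f,g\}$, all $a_i$-terms vanish since $\{\beta_i,\beta_j\} \equiv 0$, and the off-diagonal $b_i$-terms vanish by the previous paragraph; only the diagonal contribution survives, yielding $-b_j\{\beta_j,\zeta_j\}([\rho]) = 0$ and hence $b_j = 0$. The linear independence of the $d\beta_i$ then forces $a_i = 0$ for every $i$, producing the desired $2(n-3)$ linearly independent covectors among the listed family. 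The only genuinely delicate step is the combinatorial verification of curve disjointness above; once this is granted, the rest is clean linear algebra powered by Lemma~\ref{lem:poisson-bracket-vanish-iff-gamma-takes-some-values-general-case}.
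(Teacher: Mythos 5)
Your proof is correct and relies on the same two inputs as the paper's---the disjointness of $d_i$, $e_i$ from $b_j$ for $j\neq i$, together with Lemma~\ref{lem:poisson-bracket-vanish-iff-gamma-takes-some-values-general-case}---so it is essentially the same argument. The only presentational difference is that you verify linear independence by pairing a hypothetical relation directly against the Hamiltonian vector fields $X_{b_j}$, whereas the paper expands $(d\delta_i)_{[\rho]}$ in the action-angle basis $(d\beta_j,d\gamma_j)_{[\rho]}$ and reads off that the $d\gamma_j$-coefficient equals $\pm\{\beta_j,\delta_i\}([\rho])$.
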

\begin{proof}
Lemma~\ref{lem:poisson-bracket-vanish-iff-gamma-takes-some-values-general-case} says that, for every $i=1,\ldots,n-3$, the Poisson brackets $\{\beta_i,\delta_i\}$ and $\{\beta_i,\varepsilon_i\}$ cannot both vanish on $\IntRepDT{\mathcal{B}}(\surface)$. Let us first assume that $\{\beta_i,\delta_i\}([\rho])\neq 0$. Recall that, since the curve $d_i$ is disjoint from the curves $b_j$ for $j\neq i$, it follows that $\{\beta_j,\delta_i\}([\rho])= 0$ for every $j\neq i$. The functions $\beta_1,\ldots,\beta_{n-3},\gamma_1,\ldots,\gamma_{n-3}$ form a system of action-angle coordinates for $\IntRepDT{\mathcal{B}}(\surface)$, implying that the cotangent space to $\IntRepDT{\mathcal{B}}(\surface)$ has everywhere a basis given by $d\beta_i, d\gamma_i$ for $i=1,\ldots,n-3$. When we express $(d\delta_i)_{[\rho]}$ is this basis, the coefficient in front of $(d\gamma_j)_{[\rho]}$ is equal to $\{\beta_j,\delta_i\}([\rho])$ (up to maybe a sign). We conclude that $(d\delta_i)_{[\rho]}$ has zero coordinate for $(d\gamma_j)_{[\rho]}$ when $j\neq i$ and non-zero coordinate for $(d\gamma_i)_{[\rho]}$. In other words, we can write $(d\gamma_i)_{[\rho]}$ as a linear combination of $(d\delta_i)_{[\rho]}$ and $(d\beta_1)_{[\rho]},\ldots, (d\beta_{n-3})_{[\rho]}$. The same conclusion for $(d\varepsilon_i)_{[\rho]}$ holds if we are in the case $\{\beta_i,\varepsilon_i\}([\rho])\neq 0$. This shows that the cotangent space to $\IntRepDT{\mathcal{B}}(\surface)$ at $[\rho]$ is generated by $(d\beta_i)_{[\rho]}$, $(d\delta_i)_{[\rho]}$, and $(d\varepsilon_i)_{[\rho]}$ for $i=1,\ldots,n-3$.
\end{proof}

\begin{rmk}\label{rem:cotangent-space-generated-by-beta_i-delta_i-epsilon_i}
Our proof of Corollary~\ref{cor:cotangent-space-generated-by-beta_i-delta_i-epsilon_i} actually shows the following stronger statement: a basis of the cotangent space to $\IntRepDT{\mathcal{B}}(\surface)$ at $[\rho]$ is given by $(d\beta_1)_{[\rho]},\ldots, (d\beta_{n-3})_{[\rho]}$ and carefully picking one of $(d\delta_i)_{[\rho]}$ and $(d\varepsilon_i)_{[\rho]}$ for every $i=1,\ldots,n-3$. Moreover, we can pick $(d\delta_i)_{[\rho]}$ over $(d\varepsilon_i)_{[\rho]}$ as long as $\{\beta_i,\delta_i\}([\rho])\neq 0$, which happens exactly when $\overline\gamma_i\notin \{0,\pi\}$ by Lemma~\ref{lem:poisson-bracket-vanish-iff-gamma-takes-some-values-general-case} (here $\overline{\gamma}_i$ denotes the $i^{th}$ angle coordinate of $[\rho]$). In other words,
\[
[\rho]\in\IntRepDT{\mathcal{B}}(\surface)\text{ and }\overline\gamma_i\notin\{0,\pi\}\,\forall i\quad \Longrightarrow \quad T^\ast_{[\rho]} \IntRepDT{\mathcal{B}}(\surface)=\big\langle d\beta_i, d\delta_i \,\vert\, i=1,\ldots,n-3\big\rangle.
\]
\end{rmk}

\begin{rmk}
The conclusion of Corollary~\ref{cor:cotangent-space-generated-by-beta_i-delta_i-epsilon_i} only holds at points of $\IntRepDT{\mathcal{B}}(\surface)$, and in general not on the whole DT component if $n\geq 5$. The reason is that there are points $[\rho]\in\RepDT$ where the three differentials $(d\beta_i)_{[\rho]}, (d\delta_i)_{[\rho]}, (d\varepsilon_i)_{[\rho]}$ vanish simultaneously for some $i$ (consider for instance points $[\rho]$ for which both moment map components $\mu_{i-1}$ and $\mu_i$ vanish). In order to generate the cotangent space at every point of $\RepDT$ using the differential of angle functions associated to a specific family of simple closed curves one would necessarily have to consider more curves than just $b_i$, $d_i$, and $e_i$.
\end{rmk}

We can formulate a generalization of Corollary~\ref{cor:finite-intersection-b-orbits-and-d-orbits-n=4} for an arbitrary number of punctures. It says that the angle functions $\beta_1,\ldots,\beta_{n-3}$ and $\delta_1,\ldots,\delta_{n-3}$ essentially parametrizes $\IntRepDT{\mathcal{B}}(\surface)$ (up to finite redundancy).

\begin{cor}\label{cor:finite-intersection-b-orbits-and-d-orbits}
For every choice of $\zeta_i\in\{\delta_i,\varepsilon_i\}$, the preimage of every point under the map
\[
(\beta_1,\ldots,\beta_{n-3},\zeta_1,\ldots,\zeta_{n-3})\colon \IntRepDT{\mathcal{B}}(\surface)\to (0,2\pi)^{2(n-3)}
\]
consists of at most $2^{n-3}$ points.
\end{cor}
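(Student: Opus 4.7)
The plan is to mimic the argument of Corollary~\ref{cor:finite-intersection-b-orbits-and-d-orbits-n=4} in each coordinate $\gamma_i$ separately, relying on the trigonometric identities~\eqref{eq:delta_i-and-varepsilon_i-in-terms-of-gamma_i} already derived in the proof of Lemma~\ref{lem:poisson-bracket-vanish-iff-gamma-takes-some-values-general-case}. The key structural input is that the standard action-angle coordinates decouple the problem along the pants curves $b_i$.

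I would start by fixing a target point $(\overline\beta_1,\ldots,\overline\beta_{n-3},\overline\zeta_1,\ldots,\overline\zeta_{n-3})$ and identifying the fiber of the component $(\beta_1,\ldots,\beta_{n-3})$ of the map. Since we work in $\IntRepDT{\mathcal{B}}(\surface)$, this fiber is a regular fiber of the moment map, which by Section~\ref{sec:parametrization-by-triangle-chains} is a Lagrangian torus parametrized by the angle coordinates $(\gamma_1,\ldots,\gamma_{n-3})\in (\R/2\pi\Z)^{n-3}$. It then suffices to show that within such a torus the simultaneous equations $\zeta_1=\overline\zeta_1,\ldots,\zeta_{n-3}=\overline\zeta_{n-3}$ admit at most $2^{n-3}$ solutions.

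The central observation, which is the main point to justify, is that along this torus every function $\zeta_i$ depends only on the single coordinate $\gamma_i$. Indeed, the curves $d_i$ and $e_i$ are disjoint from $b_j$ for $j\neq i$, so the Hamiltonian flows $\Phi_{b_j}$ preserve $\zeta_i$ whenever $j\neq i$, forcing $\partial\zeta_i/\partial\gamma_j=0$. Equivalently, this can be read off directly from~\eqref{eq:delta_i-and-varepsilon_i-in-terms-of-gamma_i}: the quantities $k_1,k_2,k_1',k_2'$ are expressed purely in terms of hyperbolic distances between vertices of the two $\mathcal{B}$-triangle chain triangles meeting at $B_i$, and these distances depend only on $\overline\beta_{i-1},\overline\beta_i,\overline\beta_{i+1}$ together with the peripheral angles, all of which are constant on the fiber.

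The conclusion then follows by a coordinatewise count: for each fixed $i$, prescribing $\zeta_i=\overline\zeta_i$ in~\eqref{eq:delta_i-and-varepsilon_i-in-terms-of-gamma_i} yields an equation of the form $\cos(\overline\gamma_i)=\mathrm{const}$ or $\cos(\overline\beta_i/2-\overline\gamma_i)=\mathrm{const}$, each of which has at most two solutions $\overline\gamma_i\in\R/2\pi\Z$. Taking the product of these finite solution sets over $i=1,\ldots,n-3$ gives at most $2^{n-3}$ points in the fiber, establishing the bound. The only delicate step is the decoupling in the previous paragraph; once it is in place, the rest is a direct trigonometric count inherited from the four-punctured case.
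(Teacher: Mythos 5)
Your proposal is correct and follows essentially the same route as the paper: the paper's proof likewise reduces to the observation that prescribing $\beta_i$ and $\zeta_i$ forces $\gamma_i$ to take one of at most two values, citing~\eqref{eq:delta_i-and-varepsilon_i-in-terms-of-gamma_i} directly. The decoupling step you flag as delicate (that $k_1,k_2,k_1',k_2'$ are constant on the whole Lagrangian torus, not just along $b_i$-orbits, because the relevant hyperbolic distances are determined by the $\beta$'s and the peripheral angles) is left implicit in the paper but is correctly justified by your argument.
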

\begin{proof}
It is enough to show that the prescription of a value for $\beta_i$, as well as a value for $\delta_i$ or $\varepsilon_i$ forces the angle coordinate $\gamma_i$ to take one among two possible values for every $i=1\ldots,n-3$. This, however, follows directly from~\eqref{eq:delta_i-and-varepsilon_i-in-terms-of-gamma_i}.
\end{proof}

\section{Density of infinite orbits: preliminaries}\label{sec:preliminaries}

\subsection{Overview}
This section is a preparation to the proof that infinite orbits are dense in DT components (Theorem~\ref{thm:infinite-orbits-are-dense}). We introduce one of the key tools: Selberg's Lemma (Section~\ref{sec:selberg}). We also include a proof that closures of infinite orbits are open for DT representations of 4-punctured spheres (Section~\ref{sec:infinite-orbits-are-dense-n=4}).

\subsection{Order of rational rotations}\label{sec:selberg}
The image of a DT representation may contain elliptic elements of $\psl$ whose angle of rotation are rational multiples of $\pi$. We call them \emph{rational rotations}. We would like to understand which rational rotations can simultaneously be in the image of a DT representation. It turns out that only finitely many rational rotation angles are possible. This is a consequence of Selberg's Lemma.
\begin{thm}[Selberg's Lemma]\label{thm:selberg-lemma}
A finitely generated linear group over a field of characteristic zero is virtually torsion free.    
\end{thm}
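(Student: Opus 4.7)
The plan is to follow the classical argument by reducing to a problem about congruence subgroups of matrix groups over finitely generated integral domains, and then to exhibit a torsion-free finite-index subgroup as the kernel of a suitable reduction map.

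First I would reduce to the case of a finitely generated subgroup $G \subset \GL_n(K)$ with $K$ of characteristic zero. Since $G$ is finitely generated, only finitely many entries of generators of $G$ (and their inverses) appear, so all entries lie in a finitely generated subring $R \subset K$. Replacing $K$ by $\mathrm{Frac}(R)$ if needed, I may assume $G \subset \GL_n(R)$ where $R$ is a finitely generated integral domain over $\Z$. The characteristic zero hypothesis now reads: $\Z \hookrightarrow R$.

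Next I would produce a reduction map with finite image. Using a version of the Nullstellensatz for finitely generated $\Z$-algebras (a consequence of Noether normalization), every maximal ideal $\mathfrak{m} \subset R$ has residue field $k(\mathfrak{m}) = R/\mathfrak{m}$ a finite field. The induced homomorphism $\pi_\mathfrak{m} \colon \GL_n(R) \to \GL_n(k(\mathfrak{m}))$ has finite image, so $G_0 := G \cap \ker \pi_\mathfrak{m}$ is a finite-index subgroup of $G$. What remains is to choose $\mathfrak{m}$ so that $G_0$ is torsion-free.

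The main step, and the delicate one, is the torsion-freeness of a kernel of a sufficiently deep congruence. The key lemma is the following: if $\mathfrak{m}$ lies above a rational prime $p$, and $g \in \GL_n(R_\mathfrak{m})$ satisfies $g \equiv I \pmod{\mathfrak{m}^k}$ for $k$ large enough (depending on $p$ and $n$), and $g$ has finite order, then $g = I$. The standard way to prove this is to pass to the $\mathfrak{m}$-adic completion, a complete discrete valuation ring of residue characteristic $p$, where one can define the matrix logarithm on $I + \mathfrak{m}^k M_n(\hat R_\mathfrak{m})$ for $k$ large (so as to ensure convergence beyond the wild ramification of $p$); then $g^N = I$ together with $\log g$ well-defined forces $N \log g = 0$, hence $\log g = 0$, hence $g = I$. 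Equivalently, one argues that the only root of unity in $1 + \mathfrak{m}^k$ is $1$ itself, by comparing $\mathfrak{m}$-adic valuations of $\zeta - 1$ for $\zeta$ a primitive $N$-th root of unity against those of a general element of $1 + \mathfrak{m}^k$.

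To conclude, I pick any maximal ideal $\mathfrak{m}$ of $R$ and replace it if necessary by $\mathfrak{m}^k$ in the above sense (so that I consider instead $G \cap \ker(\GL_n(R) \to \GL_n(R/\mathfrak{m}^k))$, which is still of finite index since $R/\mathfrak{m}^k$ is a finite ring). For $k$ large enough, this subgroup is torsion-free by the lemma, which proves that $G$ is virtually torsion-free. The main obstacle is really the congruence-subgroup lemma in step three; the earlier reductions are formal, but controlling torsion through a congruence requires the $p$-adic/valuation-theoretic input.
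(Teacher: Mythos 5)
The paper does not prove this statement: Selberg's Lemma is quoted as a classical black-box result (it is only \emph{used}, via Corollary~\ref{cor:finitely-many-rational-angles}, to bound the rational rotation angles in the image of a DT representation), so there is no in-paper argument to compare against. Your proposal is the standard proof of Selberg's Lemma and its skeleton is correct: reduce to $G\subset \mathrm{GL}_n(R)$ with $R$ a finitely generated domain over $\Z$ (remembering to adjoin the inverses of the determinants of the generators, which your parenthetical ``and their inverses'' does cover); invoke the Nullstellensatz for finitely generated $\Z$-algebras to see that $R/\mathfrak{m}$, and hence $R/\mathfrak{m}^k$, is finite, so congruence subgroups have finite index; and kill torsion in a deep enough congruence subgroup.

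The one genuine inaccuracy is in the key lemma: the $\mathfrak{m}$-adic completion $\widehat{R}_{\mathfrak{m}}$ is \emph{not} a complete discrete valuation ring in general, because $R$ typically has Krull dimension greater than one (e.g.\ $R=\Z[x]$, $\mathfrak{m}=(p,x)$ gives $\widehat{R}_{\mathfrak{m}}\cong\Z_p[[x]]$). As written, the convergence and valuation comparisons you invoke for the logarithm and for $\zeta-1$ do not literally make sense. This is repairable in several standard ways: (a) choose $\mathfrak{m}$ at a regular point of $\operatorname{Spec}R$, so that the associated graded ring is a polynomial ring over a finite field and the $\mathfrak{m}$-adic order function is an honest valuation, after which your argument goes through verbatim; (b) embed $\mathrm{Frac}(R)$ into a finite extension of $\Q_p$ for a suitable prime $p$ (possible for a finitely generated field of characteristic zero) and run the argument in a genuine local field; or (c) sidestep the wild case entirely by noting that torsion elements of $G$ have uniformly bounded order (their eigenvalues are roots of unity whose degree over $\Q$ is bounded by $n\cdot[\mathrm{Frac}(R)\cap\overline{\Q}:\Q]$), and then choosing $\mathfrak{m}$ of residue characteristic exceeding that bound, in which case the congruence subgroup modulo $\mathfrak{m}$ itself is already torsion-free by the elementary ``tame'' computation $g^q=I$, $g=I+A$, $q$ a unit, lowest-order term $qA$ nonzero. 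With any one of these fixes your proof is complete; without one, the ``complete discrete valuation ring'' step is a real gap, albeit a well-known and minor one.
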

Applied to our context, Selberg's Lemma says that the image of a DT representation $\rho\colon\pi_1\surface\to\psl$ is virtually torsion free. This means that there is an index $N_\rho$ subgroup of $\rho(\pi_1\surface)$ which has no torsion. In particular, the order of any torsion element in $\rho(\pi_1\surface)$ must divide $N_\rho$. This means that the possible values for the rotation angles of rational rotations in $\rho(\pi_1\surface)$ are of the form $2\pi a/b$ where $0<a<b$ are coprime integers and $b$ divides $N_\rho$. This describes a finite family of angles.
\begin{cor}\label{cor:finitely-many-rational-angles}
The set of rotation angles of all the rational rotations in the image of a DT representation is finite.
\end{cor}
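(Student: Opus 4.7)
The plan is to apply Selberg's Lemma (Theorem~\ref{thm:selberg-lemma}) directly to the image $\rho(\pi_1\surface) \leq \psl$. The group $\pi_1\surface$ is finitely generated, so $\rho(\pi_1\surface)$ is as well. Moreover, $\psl$ is a linear group over $\R$ of characteristic zero---for instance, it embeds into $\mathrm{GL}_3(\R)$ via its adjoint action on $\mathfrak{sl}_2\R$. Hence Selberg's Lemma furnishes a torsion-free subgroup $H \leq \rho(\pi_1\surface)$ of some finite index $N_\rho$.

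The key consequence to extract is that every torsion element of $\rho(\pi_1\surface)$ has order at most $N_\rho$. Indeed, if $g \in \rho(\pi_1\surface)$ has order $m$, then the map $\langle g\rangle \to \rho(\pi_1\surface)/H$ sending $g^k$ to its coset $g^k H$ must be injective: otherwise $g^i H = g^j H$ for some $0 \leq i < j < m$, and then $g^{j-i}$ would be a non-trivial torsion element of $H$, contradicting torsion-freeness. Since the set of cosets has size $N_\rho$, this forces $m \leq N_\rho$.

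To finish, I would translate this order bound into a bound on admissible rotation angles. An elliptic element of $\psl$ with rotation angle $\vartheta \in (0, 2\pi)$ has finite order exactly when $\vartheta$ is a rational multiple of $\pi$. Writing such an angle as $\vartheta = 2\pi a/b$ with $\gcd(a,b)=1$ and $1 \leq a < b$, a direct computation from Definition~\ref{defn:rotangle} shows that the associated rotation has order exactly $b$ in $\psl$. Combined with the previous paragraph, every rational rotation occurring in $\rho(\pi_1\surface)$ has rotation angle belonging to the finite set
\[
\bigl\{\, 2\pi a/b \;\bigm|\; 1 \leq a < b \leq N_\rho,\; \gcd(a,b)=1 \,\bigr\},
\]
which proves the corollary.

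There is no real obstacle here: Selberg's Lemma does all the heavy lifting, and the only small care needed is in observing that $\psl$ is genuinely linear (so that Selberg applies) and in translating ``finite order at most $N_\rho$'' into ``finitely many possible rotation angles.''
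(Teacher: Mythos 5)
Your proposal is correct and follows the paper's argument essentially verbatim: apply Selberg's Lemma to the finitely generated linear group $\rho(\pi_1\surface)$, bound the order of torsion elements by the index of a torsion-free subgroup, and observe that a rational rotation of angle $2\pi a/b$ (in lowest terms) has order $b$ in $\psl$. The only cosmetic difference is that the paper asserts the order of a torsion element \emph{divides} $N_\rho$ while you prove only that it is at most $N_\rho$; either bound yields finiteness, and your version has the advantage of not silently requiring the finite-index subgroup to be normal.
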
 
We adopt the following notation. For a DT representation $\rho$, we shall denote by 
\[
\rational\subset \pi\Q
\]
the finite set of rotation angles of rational rotations in the image of $\rho$. It is clear that the set $\rational$ is invariant under conjugation of $\rho$. It is also constant along mapping class group orbits since all orbit points have the same image up to conjugation. We can therefore think of $\rational$ as an invariant of the mapping class group orbit of $[\rho]$.

\subsection{The 4-punctured sphere}\label{sec:infinite-orbits-are-dense-n=4}
We recall Cantat--Loray's argument from~\cite[p.2962]{cantat-loray} to show that infinite orbits in a DT components of a 4-punctured sphere are dense, reproving a result originally due to Previte--Xia~\cite{previte-xia-minimality}. Our presentation differs from the original in the sense that we use the transversality results from Section~\ref{sec:transervality-n=4} which we obtained via symplectic geometry, rather than using the algebraic properties of trace coordinates.

\begin{thm}[\cite{previte-xia-minimality, cantat-loray}]\label{thm:infinite-orbits-are-dense-n=4}
The closure of every infinite mapping class group orbit contained in a DT component of a 4-punctured sphere is open. Consequently, the closure of the orbit is the whole DT component.
\end{thm}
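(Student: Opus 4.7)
The plan is to reduce Theorem~\ref{thm:infinite-orbits-are-dense-n=4} to showing that $\overline{\mathcal O}$ has nonempty interior for every infinite orbit $\mathcal{O} \subseteq \RepDT$. Once interior nonemptiness is known, the fact that $\overline{\mathcal{O}}$ is a closed, $\mcg$-invariant subset of positive Goldman measure combined with Theorem~\ref{thm:ergodicity} and the strict positivity of $\nu_{\mathcal G}$ forces $\overline{\mathcal{O}} = \RepDT$. To produce the open set, I would locate a preferred point $[\rho_0] \in \overline{\mathcal{O}}$ at which two angle functions among $\{\beta,\delta,\varepsilon\}$ have nonvanishing Poisson bracket and both take values in $\R \setminus \pi\Q$, and then use the Dehn-twist/Hamiltonian-flow identity~\eqref{eq:goldman-trick} together with Lemma~\ref{lem:permuting-Hamiltonian-flows} to flow in two transverse Hamiltonian directions and fill out a neighborhood of $[\rho_0]$.

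The first step is to locate a point $[\rho] \in \mathcal O$ with $\vartheta_b([\rho]) \notin \pi\Q$, after possibly exchanging the roles of $b$ and $d$. Such a point must exist: otherwise Corollary~\ref{cor:finitely-many-rational-angles} would put $\mathcal O$ inside $\{\vartheta_b \in \rational\} \cap \{\vartheta_d \in \rational\}$, a finite set by Corollary~\ref{cor:finite-intersection-b-orbits-and-d-orbits-n=4}, contradicting that $\mathcal O$ is infinite. Since $\beta$ has at most two critical points (Fact~\ref{fact:characterization-fixed-point-Dehn-twist}), I can also arrange for $[\rho]$ to be a regular point of $\beta$. Then Fact~\ref{fact:Dehn-twists-irrational-rotations} places the whole $b$-orbit of $[\rho]$---a smooth circle---inside $\overline{\mathcal O}$. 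On this circle, Lemma~\ref{lem:poisson-bracket-vanish-iff-gamma-takes-some-values} reduces the zero set of $\{\beta,\delta\}$ to the two points where $\gamma \in \{0,\pi\}$, so $\vartheta_d$ is non-constant; moreover, the trigonometric identity $\cos(\delta/2)=k_1\cos(\gamma)+k_2$ established inside the proof of that lemma shows that every value of $\vartheta_d$ is attained at most twice on the $b$-orbit, so $\vartheta_d^{-1}(\rational)$ is finite there. I would then pick $[\rho_0]$ in the dense complement and also away from the two zeros of $\{\beta,\delta\}$, securing $\vartheta_d([\rho_0])\notin \pi\Q$ and $\{\beta,\delta\}([\rho_0])\neq 0$, and apply Fact~\ref{fact:Dehn-twists-irrational-rotations} once more to put the whole $d$-orbit of $[\rho_0]$ inside $\overline{\mathcal O}$.

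For the final step, $\{\beta,\delta\}([\rho_0]) \neq 0$ implies that $X_b$ and $X_d$ are linearly independent at $[\rho_0]$, so Lemma~\ref{lem:permuting-Hamiltonian-flows} yields a diffeomorphism $\Psi(t_1,t_2)=\Phi_d^{t_2}\circ \Phi_b^{t_1}([\rho_0])$ from a small cube onto an open neighborhood $U$ of $[\rho_0]$. Repeating the analysis of the previous paragraph at each point of the $b$-orbit close to $[\rho_0]$ produces a dense subset $T \subseteq (-\varepsilon,\varepsilon)$ of parameters $t_1$ for which $\vartheta_d(\Phi_b^{t_1}([\rho_0])) \notin \pi\Q$; for each such $t_1$, the entire slice $\Psi(\{t_1\}\times(-\varepsilon,\varepsilon))$ is contained in $\overline{\mathcal O}$ by Fact~\ref{fact:Dehn-twists-irrational-rotations}. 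The resulting subset $\Psi(T\times (-\varepsilon,\varepsilon))$ is dense in $U$, so closing it inside $\overline{\mathcal O}$ fills $U$ and produces the desired open set. The main obstacle is the careful bookkeeping of the \emph{bad set} one has to avoid---the critical points of $\beta$, the finitely many rational levels of $\vartheta_b$ and $\vartheta_d$, and the two zeros of $\{\beta,\delta\}$ on each $b$-orbit---and ensuring that this bad set stays small relies crucially on the finiteness of $\rational$ provided by Selberg's Lemma (Corollary~\ref{cor:finitely-many-rational-angles}).
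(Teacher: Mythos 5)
Your proposal is correct and follows the paper's strategy in its essentials: irrationality from Selberg via Corollary~\ref{cor:finitely-many-rational-angles}, the at-most-two-to-one property of Corollary~\ref{cor:finite-intersection-b-orbits-and-d-orbits-n=4}, transversality from Lemma~\ref{lem:poisson-bracket-vanish-iff-gamma-takes-some-values}, a Hamiltonian flow box swept out by $d$-orbits over a dense set of parameters, and the ergodicity shortcut of Remark~\ref{rem:alternative-end-of-proof} in place of the paper's direct treatment of limit points in Section~\ref{sec:limit-points-n=4}. The one genuine difference is that you never use the third curve $e$: the paper centers its flow box at an orbit point $[\rho']$ chosen in advance, and since $\{\beta,\delta\}$ may vanish there (when $\gamma([\rho'])\in\{0,\pi\}$) it must keep $\varepsilon$ in reserve, whereas you first capture the entire $b$-orbit circle inside $\overline{\Orb([\rho])}$ and only then slide along it to a point $[\rho_0]$ avoiding the two zeros of $\{\beta,\delta\}$---a repositioning that costs nothing because the circle is already in the closure. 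This is a real economy in the $4$-punctured case, although $\varepsilon$ remains indispensable for the paper's pointwise transversality statement (Corollary~\ref{cor:cotangent-space-generated-by-beta-delta-epsilon-n=4}) and for its treatment of limit points. Two small imprecisions to tighten: (i) to find a point that is simultaneously $\beta$-irrational and $\beta$-regular you need that $\Orb([\rho])\setminus\big(\beta^{-1}(\rational)\cap\delta^{-1}(\rational)\big)$ is \emph{infinite} (not merely nonempty), so that one of the alternatives ``$\beta\notin\pi\Q$'' or ``$\delta\notin\pi\Q$'' holds on an infinite subset from which the two critical points can be excised; your contradiction argument yields this, but as written it only produces one point. (ii) Avoiding the finite set $\vartheta_d^{-1}(\rational)$ on the $b$-orbit does not secure $\vartheta_d([\rho_0])\notin\pi\Q$, since $[\rho_0]$ need not lie on the mapping class group orbit and $\rational$ only controls rational angles of orbit points; what you actually need (and what the two-to-one property does give, $\vartheta_d^{-1}(\pi\Q)$ being countable) is the density of $\vartheta_d^{-1}(\R\setminus\pi\Q)$ on the circle, which is Fact~\ref{fact:densely-many-irrational-points}. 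Neither point damages the final flow-box argument, which only uses $\{\beta,\delta\}([\rho_0])\neq 0$ and the density of the parameter set $T$.
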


The proof of Theorem~\ref{thm:infinite-orbits-are-dense-n=4} unfolds over the next sections (Sections~\ref{sec:plan-of-proof-n=4}---\ref{sec:limit-points-n=4}).

\subsubsection{Structure of the proof}\label{sec:plan-of-proof-n=4}
To prove Theorem~\ref{thm:infinite-orbits-are-dense-n=4}, we start by fixing some point $[\rho]\in\RepDT$ with infinite mapping class group orbit. We denote by $\Orb([\rho])$ its mapping class group orbit and by $\overline{\Orb([\rho])}$ its closure. 
Once we shall have proven that $\overline{\Orb([\rho])}$ is open, it will follow that $\overline{\Orb([\rho])}=\RepDT$ because $\RepDT$ is connected. To show that $\overline{\Orb([\rho])}$ is open, we shall construct an open neighborhood $U_{[\rho']}\subset \overline{\Orb([\rho])}$ around every point $[\rho']$ inside $\overline{\Orb([\rho])}$. We start by constructing $U_{[\rho']}$ around every point $[\rho']\in\Orb([\rho])$ (Section~\ref{sec:open-set-U-n=4}). It will be enough to do so for one specific point in $\Orb([\rho])$ (Section~\ref{sec:choosing-a-point-n=4}) since we can then transport the open neighborhood to every other orbit point by letting $\mcg$ act. We shall then explain how to obtain a similar open set for points in $\overline{\Orb([\rho])}\setminus \Orb([\rho])$ (Section~\ref{sec:limit-points-n=4}).

\subsubsection{Choosing a point $[\rho']$ in $\Orb([\rho])$}\label{sec:choosing-a-point-n=4}
We shall work with the curves $b$, $d$, $e$ and the corresponding angle functions $\beta$, $\delta$, $\varepsilon$ that we introduced in Section~\ref{sec:transervality-n=4}. Let $\rational$ be the finite set of rational rotation angles in the image of any point in $\Orb([\rho])$ (introduced after Corollary~\ref{cor:finitely-many-rational-angles}). Corollary~\ref{cor:finite-intersection-b-orbits-and-d-orbits-n=4} implies that both $\beta^{-1}(\rational)\cap\delta^{-1}(\rational)$ and $\beta^{-1}(\rational)\cap\varepsilon^{-1}(\rational)$ are finite subsets of $\RepDT$. Moreover, since we are working with 4-punctured spheres, the function $\beta$ has exactly two critical points on $\RepDT$. Therefore, since we are assuming that $\Orb([\rho])$ is infinite, there exists a point $[\rho']\in\Orb([\rho])$ such that $(d\beta)_{[\rho']}\neq 0$ and such that $[\rho']$ lies outside of the sets $\beta^{-1}(\rational)\cap\delta^{-1}(\rational)$ and $\beta^{-1}(\rational)\cap\varepsilon^{-1}(\rational)$.

\subsubsection{Constructing the open set $U_{[\rho']}$}\label{sec:open-set-U-n=4}
We now apply Lemma~\ref{lem:poisson-bracket-vanish-iff-gamma-takes-some-values} to the point $[\rho']$. Since $(d\beta)_{[\rho']}\neq 0$, either $\{\beta,\delta\}([\rho'])\neq 0$ or $\{\beta,\varepsilon\}([\rho'])\neq 0$. Let us assume that $\{\beta,\delta\}([\rho'])\neq 0$; the other case can be treated similarly. In particular, both the $b$-orbit and the $d$-orbit of $[\rho']$ are circles of length $\pi$ that meet transversely at $[\rho']$. Because of the way $[\rho']$ was picked, either $\beta([\rho'])$ or $\delta([\rho'])$ is an irrational multiple of $\pi$. Let us assume that $\beta([\rho'])$ is an irrational multiple of $\pi$; again, the other case can be treated by analogous arguments. Fact~\ref{fact:Dehn-twists-irrational-rotations} implies that the $b$-orbit of $[\rho']$ is entirely contained in $\overline{\Orb([\rho'])}=\overline{\Orb([\rho])}$. 

As we explained in the proof of Corollary~\ref{cor:cotangent-space-generated-by-beta-delta-epsilon-n=4}, $\{\beta,\delta\}([\rho'])\neq 0$ implies that the cotangent space to $\RepDT$ at $[\rho']$ is generated by $(d\beta)_{[\rho']}$ and $(d\delta)_{[\rho']}$. By the discussion of Section~\ref{sec:local-parameterization}, there exists an open interval $I\subset \R$ around $0$ such that 
\[
U_{[\rho']}=\{\Phi_d^{t_1}\circ\Phi_b^{t_2}([\rho']):t_1,t_2\in I\}
\]
is an open neighborhood of $[\rho']$ in $\RepDT$. Fact~\ref{fact:densely-many-irrational-points} says that there exists a dense subset $I'\subset I$ such that $\delta(\Phi_b^{t_2}([\rho']))$ is an irrational multiple of $\pi$ for every $t_2\in I'$. Since $\Phi_b^{t_2}([\rho'])\in \overline{\Orb([\rho])}$ for every $t_2\in I'$ by the above, we conclude that the $d$-orbit of $\Phi_b^{t_2}([\rho'])$ is entirely contained in $\overline{\Orb([\rho])}$ for every $t_2\in I'$. By density of $I'$ in $I$, we conclude that $U_{[\rho']}\subset \overline{\Orb([\rho])}$.

\subsubsection{Limit points of $\overline{\Orb([\rho])}$}\label{sec:limit-points-n=4}
We still have to construct an analogous open set around every point in $\overline{\Orb([\rho])}\setminus \Orb([\rho])$. As before, it is enough to do so for one point as we can then translate this open set around every other point using $\mcg$. Let $[\rho_\infty]\in \overline{\Orb([\rho])}\setminus \Orb([\rho])$. By Lemma~\ref{lem:poisson-bracket-vanish-iff-gamma-takes-some-values}, either $\{\beta,\delta\}$ or $\{\beta,\varepsilon\}$ does not vanish at $[\rho_\infty]$. Let us assume that $\{\beta,\delta\}([\rho_\infty])\neq 0$; the other case can be treated similarly. The discussion of Section~\ref{sec:local-parameterization} shows the existence of some open interval $I_\infty\in \R$ around~$0$ such that $V_\infty=\{\Phi_d^{t_1}\circ\Phi_b^{t_2}([\rho_\infty]):t_1,t_2\in I_\infty\}\cap \{\Phi_b^{t_1}\circ\Phi_d^{t_2}([\rho_\infty]):t_1,t_2\in I_\infty\}$ is an open neighborhood of $[\rho_\infty]$ in $\RepDT$.

Since $[\rho_\infty]\in \overline{\Orb([\rho])}\setminus \Orb([\rho])$, there exists an infinite subset $\{[\rho_n]\}\subset \Orb([\rho])$ such that $[\rho_n]$ converges to $[\rho_\infty]$ as $n\to \infty$. We can find some large enough index $n$ such that $[\rho_n]$ lies outside of the finite set $\beta^{-1}(\rational)\cap\delta^{-1}(\rational)$ and such that $[\rho_n]\in V_\infty$. This means that either the $b$-orbit or the $d$-orbit of $[\rho_n]$ is entirely contained in $\overline{\Orb([\rho])}$. Assume it is the $b$-orbit; again, the other case can be treated by similar arguments. Since $[\rho_n]\in V_\infty$, we can find $t_1,t_2\in I_\infty$ such that $[\rho_\infty]=\Phi_d^{-t_2}\circ\Phi_b^{-t_1}([\rho_n])$. Note that $\Phi_b^{-t_1}([\rho_n])\in V_{\infty}$, because $\Phi_b^{-t_1}([\rho_n])=\Phi_d^{t_2}([\rho_\infty])$. So, the Poisson bracket $\{\beta,\delta\}$ does not vanish at $\Phi_b^{-t_1}([\rho_n])$ and Fact~\ref{fact:densely-many-irrational-points} implies that there are densely many $t$ around $t_1$ inside $I_\infty$ such that $\delta(\Phi_b^{-t}([\rho_n]))$ is an irrational multiple of $\pi$. Arguing as in Section~\ref{sec:choosing-a-point-n=4}, we obtain that the $d$-orbit of $\Phi_b^{-t_1}([\rho_n])$---which coincides with the $d$-orbit of $[\rho_\infty]$---is entirely contained in $\overline{\Orb([\rho])}$. Since we are assuming that $\{\beta,\delta\}([\rho_\infty])\neq 0$, we can repeat the argument to find densely many $t$ around $0$ in $I_\infty$ such that $\beta(\Phi_d^t([\rho_\infty]))$ is an irrational multiple of $\pi$. The $b$-orbits of all such $\Phi_d^t([\rho_\infty])$ are entirely contained in $\overline{\Orb([\rho])}$. So, up to shrinking the interval $I_\infty$ to some smaller interval $I_\infty'$ around $0$, we conclude that the open set 
\[
U_{[\rho_\infty]}=\{\Phi_b^{t_1}\circ\Phi_d^{t_2}([\rho_\infty]):t_1,t_2\in I_\infty'\}
\]
around $[\rho_\infty]$ is entirely contained in $\overline{\Orb([\rho])}$. This finishes the proof Theorem~\ref{thm:infinite-orbits-are-dense-n=4}.

\begin{rmk}\label{rem:alternative-end-of-proof}
Alternatively, it is possible to conclude the proof of Theorem~\ref{thm:infinite-orbits-are-dense-n=4} without the construction of the open set from Section~\ref{sec:limit-points-n=4}. We can instead use the ergodicity of the mapping class group action on $\RepDT$ (Theorem~\ref{thm:ergodicity}). Since $\overline{\Orb([\rho])}$ is a closed invariant set with non-empty interior by the construction of Sections~\ref{sec:choosing-a-point-n=4} and~\ref{sec:open-set-U-n=4}, and since the Goldman measure $\nu_{\mathcal{G}}$ is strictly positive, we conclude that $\nu_{\mathcal{G}}(\overline{\Orb([\rho])})=1$ by ergodicity. This immediately implies that $\overline{\Orb([\rho])}=\RepDT$ because $\nu_{\mathcal{G}}$ has full support.
\end{rmk}

\section{Density of infinite orbits: the proof}\label{sec:proof}

\subsection{Overview}
In this section, we generalize Theorem~\ref{thm:infinite-orbits-are-dense-n=4} and prove that infinite mapping class group orbits inside DT components are dense for any number of punctures.

\begin{thm}\label{thm:infinite-orbits-are-dense}
For every punctured sphere $\surface$ and every angle vector $\alpha$ satisfying the angle condition~\eqref{eq:angle-condition}, infinite mapping class group orbits in $\RepDT$ are dense.
\end{thm}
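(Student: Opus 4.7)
The plan is to proceed by induction on the number of punctures $n\geq 4$. The base case $n=4$ is Theorem~\ref{thm:infinite-orbits-are-dense-n=4}. For the inductive step, fix an infinite orbit $\Orb([\rho])\subset\RepDT$ with $n\geq 5$ and aim to show that $\overline{\Orb([\rho])}$ has nonempty interior. Once this is established, the invariant closed set $\overline{\Orb([\rho])}$ has strictly positive Goldman measure, hence full measure by ergodicity (Theorem~\ref{thm:ergodicity}), hence equals $\RepDT$ since $\nu_{\mathcal{G}}$ has full support. This is the strategy foreshadowed in Remark~\ref{rem:alternative-end-of-proof}, upgraded to all punctured spheres.

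The heart of the argument is the construction of a \emph{preferred} point $[\rho']\in\Orb([\rho])$ satisfying three genericity conditions simultaneously: it lies in the regular stratum $\IntRepDT{\mathcal{B}}(\surface)$; none of its angle coordinates $\gamma_i([\rho'])$ equals $0$ or $\pi$, so that the Poisson brackets $\{\beta_i,\delta_i\}([\rho'])$ do not vanish by Lemma~\ref{lem:poisson-bracket-vanish-iff-gamma-takes-some-values-general-case}; and at least one of the values $\beta_i([\rho'])$ lies outside the finite set $\rational$ of rational rotation angles from Corollary~\ref{cor:finitely-many-rational-angles}. Direct pigeonhole is insufficient because each genericity condition can a priori fail on an infinite invariant subset. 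The role of induction is to rule this out: when an infinite orbit is entirely confined to one of the degenerate loci (a degenerate triangle in the $\mathcal{B}$-triangle chain, or some $\gamma_i\in\{0,\pi\}$), the representation splits into smaller triangle chains glued along shared vertices, producing an infinite orbit in a DT component of a sphere with strictly fewer punctures. The inductive hypothesis then applies and contradicts the trapping. This is the content of the sequence of reduction lemmas flagged in the introduction, namely Lemmas~\ref{lem:infinite-orbits-intersect-regular-fibers-at-infinitely-many-points}, \ref{lem:getting-rid-of-beta=pi}, and~\ref{lem:getting-rid-of-gamma_i=0-or-pi}.

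Once $[\rho']$ has been secured, Remark~\ref{rem:cotangent-space-generated-by-beta_i-delta_i-epsilon_i} furnishes a cotangent basis $(d\beta_1)_{[\rho']},\ldots,(d\beta_{n-3})_{[\rho']},(d\delta_1)_{[\rho']},\ldots,(d\delta_{n-3})_{[\rho']}$, and the Inverse Function Theorem argument of Section~\ref{sec:local-parameterization}, together with Lemma~\ref{lem:permuting-Hamiltonian-flows}, produces an open neighborhood $U$ of $[\rho']$ parametrized by the Hamiltonian flows $\Phi_{b_i}$ and $\Phi_{d_i}$ in any chosen order. Suppose $\beta_{i_0}([\rho'])\notin\rational$ for some index $i_0$. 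Fact~\ref{fact:Dehn-twists-irrational-rotations} applied to $\tau_{b_{i_0}}$ shows that the full $b_{i_0}$-orbit of $[\rho']$ lies in $\overline{\Orb([\rho])}$. Along this orbit, the non-vanishing of $\{\beta_{i_0},\delta_{i_0}\}$ and Fact~\ref{fact:densely-many-irrational-points} produce a dense collection of points with $\delta_{i_0}$-value an irrational multiple of $\pi$; applying the Dehn twist argument to $\tau_{d_{i_0}}$ sweeps each of their $d_{i_0}$-orbits into $\overline{\Orb([\rho])}$. Iterating across the remaining flow directions, and using that consecutive angle functions in the family $\{b_i,d_i,e_i\}$ have generically non-vanishing Poisson brackets by Lemma~\ref{lem:poisson-bracket-vanish-iff-gamma-takes-some-values-general-case}, fills the neighborhood $U$ with Hamiltonian flow lines contained in $\overline{\Orb([\rho])}$. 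The same limit-point argument used in Section~\ref{sec:limit-points-n=4} (or directly the ergodicity shortcut of Remark~\ref{rem:alternative-end-of-proof}) completes the proof.

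The main obstacle is undoubtedly Step~1, the inductive construction of the preferred point. Unlike the $n=4$ case, where transversality fails only at two isolated critical points of $\beta$, the critical locus of each $\beta_i$ for $n\geq 5$ is positive-dimensional, and the degenerate strata where $\gamma_i\in\{0,\pi\}$ are genuine codimension-one submanifolds that need not be visited by a given infinite orbit in any obvious way. It is only the combinatorics of triangle chain degenerations, combined with the compatibility of the moment map stratification with the mapping class group action, that permits the descent to smaller punctured spheres on which the inductive hypothesis can be fired.
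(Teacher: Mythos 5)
Your overall architecture coincides with the paper's: induction on the number of punctures with Theorem~\ref{thm:infinite-orbits-are-dense-n=4} as base case; reduction to showing that $\overline{\Orb([\rho])}$ has nonempty interior followed by the ergodicity shortcut of Remark~\ref{rem:alternative-end-of-proof}; and an inductive descent to smaller spheres (via splitting the triangle chain at a degenerate triangle or along a curve with $\beta_j=\pi$) to manufacture a preferred orbit point in $\IntRepDT{\mathcal{B}}(\surface)$ with every $\gamma_i\notin\{0,\pi\}$ and an irrationality condition. Two small imprecisions there: the finiteness coming from Corollary~\ref{cor:finite-intersection-b-orbits-and-d-orbits} only guarantees that some $\beta_j$ \emph{or} some $\delta_j$ is an irrational multiple of $\pi$, not that some $\beta_j$ is (both cases must be run, though they are symmetric); and the descent itself requires the nontrivial verification that the restricted orbit actually contains a point with infinite orbit for the sub-sphere's mapping class group (Claim~\ref{claim:infinite-sub-orbit}), which does not come for free from the restriction being an infinite set.

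The genuine gap is in your sweeping step. You propose to fill the neighborhood $U$ by ``iterating across the remaining flow directions, and using that consecutive angle functions in the family $\{b_i,d_i,e_i\}$ have generically non-vanishing Poisson brackets by Lemma~\ref{lem:poisson-bracket-vanish-iff-gamma-takes-some-values-general-case}.'' That lemma only controls $\{\beta_i,\delta_i\}$ and $\{\beta_i,\varepsilon_i\}$, i.e.\ brackets of functions attached to the \emph{same} index $i$. For $j\neq i$ the curves $b_j$ and $d_i$ (resp.\ $b_j$ and $b_i$) are disjoint, so $\{\beta_j,\delta_i\}$ and $\{\beta_j,\beta_i\}$ vanish identically. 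Consequently, knowing that $\beta_{i_0}$ or $\delta_{i_0}$ takes an irrational value at $[\rho']$ lets you sweep out only the $2$-dimensional slice spanned by $\Phi_{b_{i_0}}$ and $\Phi_{d_{i_0}}$; nothing you have cited transports the irrationality to the neighboring index $i_0\pm 1$, so the iteration stalls after the first slice. What is needed, and what the paper supplies as a genuinely new computation, is that the cross-index brackets $\{\delta_i,\delta_{i+1}\}$ and $\{\delta_{i-1},\delta_i\}$ vanish only on a thin set (Claims~\ref{claim:Poisson-bracket-d_i-d_i+1} and~\ref{claim:Poisson-bracket-d_i-d_i-1}, proved via the auxiliary pants decompositions $\mathcal{D}_i$ and $\mathcal{D}_i'$ together with Claims~\ref{claim:D_i-neq-C_i+3} and~\ref{claim:D_i-neq-C_i}), packaged as Lemmas~\ref{lem:d_i->d_i+1} and~\ref{lem:d_i->d_i-1}. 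The introduction's footnote warns precisely that this ``connected graph of non-commuting angle functions'' property is \emph{not} a formal consequence of transversality plus finiteness of the fibers of $(\beta,\zeta)$ (compare the $\R^{2n}$ example with coordinates $x_i,y_i$), so it cannot be waved through; without it your argument proves only that $\overline{\Orb([\rho])}$ contains a $2$-dimensional slice, not an open set.
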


The proof of Theorem~\ref{thm:infinite-orbits-are-dense} is extensive and developed throughout the following sections (Sections~\ref{sec:plan-of-proof}---~\ref{sec:finishing-the-proof}). 

\subsection{Plan of the proof and notation}\label{sec:plan-of-proof}
We shall give a proof of Theorem~\ref{thm:infinite-orbits-are-dense} by induction on the number $n$ of punctures on $\surface$. The base case $n=4$ was treated in Theorem~\ref{thm:infinite-orbits-are-dense-n=4}. To prove the induction step, we fix a number of punctures $n\geq 5$ and assume that Theorem~\ref{thm:infinite-orbits-are-dense} holds for any number of punctures smaller than $n$. This is our induction hypothesis.
\begin{assumption}[Inductive hypothesis]\label{induction-hypotheses}
The conclusion of Theorem~\ref{thm:infinite-orbits-are-dense} holds for every sphere $\overline{\surface}$ with~$\overline{n}$ punctures where $\overline{n}<n$.
\end{assumption}
We also fix a DT component $\RepDT$ and a point $[\rho]\in\RepDT$ whose mapping class group orbit $\Orb([\rho])\subset\RepDT$ is infinite. We follow the same strategy as in the case $n=4$, which we described in Section~\ref{sec:plan-of-proof-n=4}. Briefly, our goal is to first construct an open subset contained in $\overline{\Orb([\rho])}$ around one point of $\Orb([\rho])$ (Sections~\ref{sec:choosing-a-point} and~\ref{sec:open-set-U}) in order to conclude that $\overline{\Orb([\rho])}$ has non-empty interior. We can then conclude that $\overline{\Orb([\rho])}=\RepDT$ by ergodicity as we explained in Remark~\ref{rem:alternative-end-of-proof} (Section~\ref{sec:finishing-the-proof}). There is substantial preliminary work that has to be conducted in order to prove that $\overline{\Orb([\rho])}$ has non-empty interior (Sections~\ref{sec:infinitely-many-orbit-points-regular-fibres} and~\ref{sec:cotangent-space}).

Along the way, we shall use the induction hypothesis (Assumption~\ref{induction-hypotheses}) twice: once in the proof of Lemma~\ref{lem:infinite-orbits-intersect-regular-fibers-at-infinitely-many-points} and once in the proof of Lemma~\ref{lem:getting-rid-of-beta=pi}.

We shall work with a fixed geometric presentation of $\pi_1\surface$ with generators $c_1,\ldots, c_n$ and consider $\mathcal{B}$ the associated standard pants decomposition of $\surface$. Recall that the pants curves of $\mathcal{B}$ are given by the fundamental group elements $b_i=(c_1\cdots c_{i+1})^{-1}$ for $i=1,\ldots,n-3$. The corresponding regular fibers of the moment map $\mu=(\mu_1,\ldots,\mu_{n-3})$ from Section~\ref{sec:moment-map} are, as usual, denoted by
\[
\IntRepDT{\mathcal{B}}(\surface)\subset\RepDT.
\]

\textit{Notation:} Many of our arguments involve trigonometry in the hyperbolic plane and it will be convenient to have a concise notation for hyperbolic lines, rays, and segments. Given two distinct points $A$ and $B$, we adopt the following convention:
\begin{itemize}
    \item $(AB)$ stands for the geodesic line through $A$ and $B$,
    \item $[AB)$ is the geodesic ray starting at $A$ and going through $B$,
    \item $[AB]$ is the geodesic segment between $A$ and $B$.
\end{itemize}

\subsection{Infinitely many orbit points in regular fibers}\label{sec:infinitely-many-orbit-points-regular-fibres}
We start by proving that an infinite orbit necessarily intersects the regular fibers $\IntRepDT{\mathcal{B}}(\surface)$ at infinitely many points. 

\begin{lem}\label{lem:infinite-orbits-intersect-regular-fibers-at-infinitely-many-points}
Infinite orbits intersect the regular fibers at infinitely many points. In particular,
\[
\Orb([\rho])\cap\IntRepDT{\mathcal{B}}(\surface)
\]
is infinite.
\end{lem}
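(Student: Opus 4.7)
The plan is to argue by contradiction. Suppose $\Orb([\rho]) \cap \IntRepDT{\mathcal{B}}(\surface)$ is finite while $\Orb([\rho])$ is infinite. Since the complement $\RepDT \setminus \IntRepDT{\mathcal{B}}(\surface)$ is the finite union $\bigcup_{i=0}^{n-3}\{\mu_i = 0\}$, the pigeonhole principle produces an index $i$ such that the set $\Orb([\rho]) \cap F_i$ is infinite, where $F_i := \{\mu_i = 0\}$.

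The next step is to exploit the geometric meaning of $F_i$: a point in $F_i$ is a DT representation whose $(i+1)$-th triangle in the $\mathcal{B}$-triangle chain is degenerate, so the chain splits into two sub-chains of lengths $i$ and $n-3-i$. Reassembling these sub-chains yields DT representations of two punctured spheres $\surface^{-}$ and $\surface^{+}$, each with strictly fewer than $n$ punctures, and hence a gluing map $\pi \colon F_i \dashrightarrow \RepDTarg{\alpha^{-}}{\surface^{-}} \times \RepDTarg{\alpha^{+}}{\surface^{+}}$ defined on an open dense subset of $F_i$. The map $\pi$ intertwines the natural action on $F_i$ of a subgroup of $\mcg$ isomorphic to $\PMod(\surface^{-}) \times \PMod(\surface^{+})$ (namely, the mapping classes that fix the pants curve $b_{i+1}$ and preserve the split) with the product action on the target.

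Applying $\pi$ to the infinite set $\Orb([\rho]) \cap F_i$ and projecting to one of the two factors, at least one projection must contain an infinite $\PMod(\surface^{\pm})$-orbit inside a DT component on a sphere with strictly fewer than $n$ punctures. The inductive hypothesis (Assumption~\ref{induction-hypotheses}) then asserts that this smaller orbit is dense in its DT component. Lifting back via $\pi$ and combining with the fibered torus action generated by the Hamiltonian flows of the remaining action coordinates $\beta_j$ (Section~\ref{sec:DT-ham-dynamics}), I would deduce that $\overline{\Orb([\rho])}$ contains a nonempty open subset of $F_i$. To propagate this open subset into an open subset of $\RepDT$, I would apply a Dehn twist along a curve transverse to $b_{i+1}$—for instance $\tau_{d_i}$ or $\tau_{e_i}$ from Section~\ref{sec:general-case}—whose Hamiltonian flow is transverse to $F_i$ at generic points by Lemma~\ref{lem:poisson-bracket-vanish-iff-gamma-takes-some-values-general-case}, and combine this with Fact~\ref{fact:Dehn-twists-irrational-rotations} to sweep the open set inside $F_i$ along continuous flow orbits. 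The outcome is a nonempty open set $V \subset \RepDT$ contained in $\overline{\Orb([\rho])}$; density of $\Orb([\rho])$ in $V$ together with openness of $V \cap \IntRepDT{\mathcal{B}}(\surface)$ then produces infinitely many orbit points in $\IntRepDT{\mathcal{B}}(\surface)$, contradicting the assumption.

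The main obstacle is making the splitting map $\pi$ and the associated $\PMod$-equivariance precise, especially for the degenerate boundary indices $i \in \{0, 1, n-4, n-3\}$, where one of the factor spheres can have fewer than four punctures and the inductive hypothesis does not directly apply; those cases must be handled by observing that the degenerate factor contributes only finitely many parameters, so that the infinitude of $\Orb([\rho]) \cap F_i$ necessarily propagates through the other factor. A secondary subtlety is verifying that the swept-out open set $V$ genuinely sits inside $\overline{\Orb([\rho])}$ and not just inside a union of hypersurfaces of $\RepDT$; this is where the Hamiltonian flow interpretation of Dehn twists and the irrationality of generic rotation angles (a consequence of Selberg's Lemma, Section~\ref{sec:selberg}) are essential.
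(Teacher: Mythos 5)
Your plan shares the paper's high-level idea of invoking the inductive hypothesis on a subsphere, but the route you propose has a structural gap.  You cut along the single pants curve $b_i$ and try to define a map $\pi\colon F_i \dashrightarrow \RepDTarg{\alpha^-}(\surface^-) \times \RepDTarg{\alpha^+}(\surface^+)$ into a fixed product of DT components.  This cannot work as stated: the boundary angle at the new puncture is $\beta_i$ (resp.\ $2\pi-\beta_i$), which is \emph{not constant} on the stratum $F_i=\{\mu_i=0\}$.  The target of your splitting map is therefore a one-parameter family of products of DT components, not a fixed pair, and projecting an infinite subset of $F_i$ to one factor need not produce an infinite orbit inside any single DT component, which is what the inductive hypothesis requires.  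The paper's proof is designed precisely to sidestep this: it chooses a subset $I'\subset\{0,\dots,n-3\}$ of \emph{minimal} cardinality with $\Orb_{I'}$ infinite, and then uses the Dehn twist $\tau$ along $c_{i'+2}c_{i'+3}$ (Claim~\ref{claim:un-degenerating-triangles}) to \emph{shift} the degenerate triangles to the end of the chain.  Once all degenerate triangles are consecutive at the end, the representation is entirely determined by its restriction to a \emph{single} subsphere $\overline\surface$, the boundary angle $\overline\alpha_{\overline n}$ is a fixed constant (determined only by $\alpha$), and the restriction map~\eqref{eq:restriction-map-one-subsphere} becomes an honest equivariant bijection onto a single DT component.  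Your proposal replaces this with a two-factor cut and has no analogue of the shifting step, which is the engine of the argument.

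There are further points where your outline departs from what the lemma needs.  First, the pigeonhole only gives you an index $i$ with $\Orb([\rho])\cap F_i$ infinite, but this set might live entirely on deeper strata $A_I$ with several $\mu_j$ vanishing---you must control \emph{how many} triangles are degenerate, which is what the minimal-cardinality choice of $I'$ accomplishes.  Second, your final ``propagate into an open set of $\RepDT$ and derive a contradiction'' step is really the machinery of Section~\ref{sec:open-set-U}, not the content of this lemma; the paper obtains the contradiction much more cheaply: density of $\Orb_{\overline I}$ in $A_{\overline I}$ (coming from the inductive hypothesis and the isomorphism $\mathcal{R}$) supplies infinitely many points where $\beta_{\overline n-3}\neq 2\pi-\alpha_{\overline n-1}$, and a single application of the shifting twist (via Remark~\ref{rem:un-degenerating-triangles}) moves them into $\Orb_{\overline I\setminus\{n-2-|I'|\}}$, contradicting minimality of $|I'|$ directly.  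Finally, the equivariance claim for $\pi$ needs care: a mapping class preserving your stratum $F_i$ and the splitting must fix both $b_i$ and the locus $\{\beta_{i+1}-\beta_i=2\pi-\alpha_{i+2}\}$, and identifying this subgroup precisely (and showing it is large enough) is non-trivial.  In short, the proposal contains the right guiding idea (induction via subspheres plus Selberg-type transversality, echoing the paper's Claim~\ref{claim:infinite-sub-orbit}) but is missing the minimality-and-shifting mechanism that makes the reduction to a single DT component on a single subsphere possible.
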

\begin{proof}
We start by introducing some notation. For every subset $I\subset \{0,\ldots,n-3\}$ of cardinality $0\leq |I|\leq n-3$, we consider the subset of $\RepDT$ defined by
\[
A_I=\bigcap_{i\in I}\{\mu_i=0\}\cap\bigcap_{i\notin I}\{\mu_i\neq 0\}.
\]
The functions $\mu_i$ are the components of the moment map introduced in Section~\ref{sec:moment-map}. In other words, the points of $A_I$ are those whose associated $\mathcal{B}$-triangle chain has several degenerate triangles: precisely those indexed by the elements of $I$. We can write $\RepDT$ as the disjoint union of all the $A_I$ and we point out that $\IntRepDT{\mathcal{B}}(\surface)=A_\emptyset$. We shall also write
\[
\Orb_I=\Orb([\rho])\cap A_I.
\]
The sets $\Orb_I$ are of course not invariant sets for the mapping class group action; they are only preserved by the Dehn twists $\tau_{b_1},\ldots,\tau_{b_{n-3}}$. Since we are assuming that $\Orb([\rho])$ is infinite, there exists a subset $I'\subset \{0,\ldots,n-3\}$ of \emph{minimal cardinality} such that $\Orb_{I'}$ is infinite. The conclusion of the lemma is equivalent to $|I'|=0$. 

\begin{assumption}\label{ass:I'-minimal-cardinality}
We shall assume for the sake of contradiction that the set $I'$ of minimal cardinality with the property that $\Orb_{I'}$ is infinite has $|I'|\geq 1$. 
\end{assumption}

Assume that there exists an index $i'\in I'$ such that $i'\leq n-4$ and $i'+1\notin I'$. We shall prove that $\Orb_{I'\cup\{i'+1\}\setminus \{i'\}}$ is also infinite. Note that the $\mathcal{B}$-triangle chain of any point in $A_{I'}$ has $B_{i'}=C_{i'+2}=B_{i'+1}$ because $\mu_{i'}=0$ and it also satisfies $\mu_{i'+1}\neq 0$ because we are assuming $i'+1\notin I'$. Let $\tau$ denote the Dehn twist along the curve $c_{i'+2}c_{i'+3}$. 

\begin{claim}\label{claim:un-degenerating-triangles}
The Dehn twist $\tau$ maps every point of $A_{I'}$ into either $A_{I'\setminus\{i'\}}$ or $A_{I'\cup\{i'+1\}\setminus\{i'\}}$. 
\end{claim}

\begin{proofclaim}
To prove the claim, we shall show that the functions $\mu_j$ with $j\notin\{i',i'+1\}$ are invariant under $\tau$ and that the image of every point in $A_{I'}$ has $\mu_{i'}\neq 0$.

The argument is inspired from a similar argument used in the proof of \cite[Theorem~6.2]{arnaud-sam}. If $[\phi]\in A_{I'}$, then the $\mathcal B$-triangle chain of $\phi$ contains a non-degenerate triangle with vertices $(B_{i'+1}=C_{i'+2}, C_{i'+3}, B_{i'+2})$ because $\mu_{i'+1}\neq 0$. Its interior angles are $(\beta_{i'+1}/2, \pi-\alpha_{i'+3}/2, \pi-\beta_{i'+2}/2)$. Note that since $\mu_{i'}=0$, it holds that $\beta_{i'+1}/2=\pi-\alpha_{i'+2}/2+\beta_{i'}/2$. 
\begin{center}
\begin{tikzpicture}[font=\sffamily]
    
\node[anchor=south west,inner sep=0] at (0,0) {\includegraphics[width=6cm]{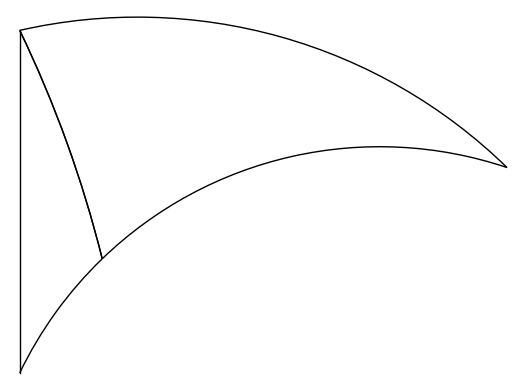}};

\begin{scope}
\fill (0.25,.2) circle (0.07) node[left]{$B_{i'+2}$};
\fill (0.22,4.15) circle (0.07) node[above left]{$B_{i'}=C_{i'+2}=B_{i'+1}$};
\fill (5.8,2.55) circle (0.07) node[right]{$C_{i'+3}$};
\fill (1.15,1.5) circle (0.07) node[right]{$D$};
\end{scope}

\draw[mauve, thick] (5.2,2.7) arc (170:118:.3) node[at start, above left]{\small $\pi-\alpha_{i'+3}/2$};
\draw[apricot, thick] (.5,.7) arc (60:90:.45) node[at end, left]{\small $\pi-\beta_{i'+2}/2$};
\draw[apricot, thick] (0.25,3.6) arc (270:362:.6) node[at start, above left]{\small $\beta_{i'+1}/2$};
\draw[apricot, thick] (0.25,3.4) arc (270:290:.8) node[at start, left]{\small $\beta_{i'}/2$};
\draw[mauve, thick] (0.57,3.4) arc (290:365:.8) node[midway, right]{\small $\pi-\alpha_{i'+2}/2$};
\end{tikzpicture}
\end{center}
The fixed point of $\phi(c_{i'+2}c_{i'+3})$ is the point $D$ on the geodesic segment $[C_{i'+3}B_{i'+2}]$ such that $\angle C_{i'+3}C_{i'+2}D=\pi-\alpha_{i'+2}/2$. The Dehn twist $\tau$ acts on the $\mathcal B$-triangle chain of $[\phi]$ by rotating the vertices $C_{i'+2}$ and $C_{i'+3}$ anti-clockwise around $D$, and it fixes all the other exterior vertices. It also fixes the shared vertices $B_1,\ldots, B_{i'}$ and $B_{i'+2},\ldots,B_{n-3}$ (all except $B_{i'+1}$). In particular, for $j\notin\{i',i'+1\}$, the function $\mu_j$ is invariant under $\tau$. The image by $\tau$ of $[\phi]$ always has $\mu_{i'}\neq 0$ because it rotates the exterior vertex $C_{i'+2}$ and leaves the shared vertex $B_{i'}$ fixed (those two agree before the rotation since we start from a point of $A_{I'}$ and $i'\in I'$). This proves the claim.
\end{proofclaim}

\begin{rmk}\label{rem:un-degenerating-triangles}
We can actually characterize the points of $A_{I'}$ that are mapped inside $A_{I'\cup\{i'+1\}\setminus\{i'\}}$ by $\tau$. Those are the points whose image under $\tau$ has $\mu_{i'+1}=0$. This happens when $C_{i'+3}$ is sent to $B_{i'+2}$ by the rotation induced by $\tau$. This eventuality is only possible when the triangle $(B_{i'+1}=C_{i'+2}, C_{i'+3}, B_{i'+2})$ is isosceles at $B_{i'+1}=C_{i'+2}$ and $D$ is the midpoint of the geodesic segment $[C_{i'+3}B_{i'+2}]$. These two conditions can equivalently be rewritten in terms of angles as $\pi-\alpha_{i'+3}/2=\pi-\beta_{i'+2}/2$ and $\pi-\alpha_{i'+2}/2=\beta_{i'}/2$, or even $\beta_{i'+2}=\alpha_{i'+3}$ and $\beta_{i'}=2\pi-\alpha_{i'+2}$. We conclude that $\tau$ should map most points of $A_{I'}$ into $A_{I'\setminus\{i'\}}$, namely all except those whose action coordinates $\beta_{i'}$ and $\beta_{i'+2}$ have a very specific value.
\end{rmk}

A consequence of Claim~\ref{claim:un-degenerating-triangles} is that $\tau$ maps every point of $\Orb_{I'}$ to either $\Orb_{I'\setminus \{i'\}}$ or to $\Orb_{I'\cup\{i'+1\}\setminus\{i'\}}$. Because we assumed $I'$ to be minimal with the property that $\Orb_{I'}$ is infinite (Assumption~\ref{ass:I'-minimal-cardinality}), the Dehn twist  $\tau$ can only map finitely many points of $\Orb_{I'}$ into $\Orb_{I'\setminus\{i'\}}$. In particular, $\Orb_{I'\cup\{i'+1\}\setminus\{i'\}}$ will contain infinitely many points. In other words, applying $\tau$ to the elements of $\Orb_{I'}$ allowed us to replace the element $i'$ of $I'$ by the consecutive integer $i'+1$ (which we assumed was not an element of $I'$ before). By repeating this procedure, we shall eventually obtain that $\Orb_{\overline{I}}$ is infinite, where
\[
\overline{I}=\{n-2-\vert I'\vert,\ldots,n-3\}.
\]
Note that $\vert\overline{I}\vert=\vert I'\vert$. The reason to replace $\Orb_{I'}$ by $\Orb_{\overline{I}}$ is that $\Orb_{\overline{I}}$ can be seen as a subset of a DT component corresponding to a smaller sphere with only $\overline{n}=n-\vert I'\vert<n$ punctures which will allow us to apply our induction hypothesis (Assumption~\ref{induction-hypotheses}).

More precisely, the curve $b_{n-2-\vert I'\vert}=b_{\overline{n}-2}$ separates $\surface$ into two sub-spheres. Let $\overline{\surface}\subset \surface$ denote the one with peripheral loops $c_1,\ldots, c_{\overline{n}-1}$ and $b_{\overline{n}-2}$.
\begin{center}
\begin{tikzpicture}[scale=1.1, decoration={
    markings,
    mark=at position 0.6 with {\arrow{>}}}]
  \draw (0,-.5) arc(-90:-270: .25 and .5);
   \draw[black!40] (0,.5) arc(90:-90: .25 and .5);
   \draw[apricot] (4,.5) arc(90:270: .25 and .5) node[near end, above left]{$b_{\overline{n}-2}$};
   \draw[lightapricot] (4,.5) arc(90:-90: .25 and .5);
  \draw (8,.5) arc(90:270: .25 and .5);
   \draw (8,.5) arc(90:-90: .25 and .5);
   \draw (8.7, 0) node{$\ldots$};
   \draw (-.8, 0) node{$\ldots$};
  
   \draw (.5,1) arc(180:0: .5 and .25) node[midway, above]{$c_{\overline{n}-2}$};
  \draw[postaction={decorate}] (.5,1) arc(-180:0: .5 and .25);
   \draw (2.5,1) arc(180:0: .5 and .25)node[midway, above]{$c_{\overline{n}-1}$};
   \draw[postaction={decorate}] (2.5,1) arc(-180:0: .5 and .25);
   \draw (4.5,1) arc(180:0: .5 and .25)node[midway, above]{$c_{\overline{n}}$};
   \draw[postaction={decorate}] (4.5,1) arc(-180:0: .5 and .25);
   \draw (6.5,1) arc(180:0: .5 and .25)node[midway, above]{$c_{\overline{n}+1}$};
   \draw[postaction={decorate}] (6.5,1) arc(-180:0: .5 and .25);
   
   \draw (0,.5) to[out=0,in=-90] (.5,1);
   \draw (1.5,1) to[out=-90,in=180] (2,.5);
   \draw (0,-.5) to[out=0,in=180] (2,-.5);
  
   \draw (2,.5) to[out=0,in=-90] (2.5,1);
   \draw (3.5,1) to[out=-90,in=180] (4,.5);
   \draw (2,-.5) to[out=0,in=180] (4,-.5);
  
   \draw (4,.5) to[out=0,in=-90] (4.5,1);
   \draw (5.5,1) to[out=-90,in=180] (6,.5);
   \draw (4,-.5) to[out=0,in=180] (6,-.5);
  
   \draw (6,.5) to[out=0,in=-90] (6.5,1);
   \draw (7.5,1) to[out=-90,in=180] (8,.5);
   \draw (6,-.5) to[out=0,in=180] (8,-.5);

   \draw[anchor=west] (3,-1.5) node{$\overline{\surface}$};
   \draw[->] (3,-1.5) to[out=180, in=-45] (2,.1);
 \end{tikzpicture}
 \end{center}
The natural inclusion $\pi_1\overline{\surface}\hookrightarrow\pi_1\surface$ can be used to restrict representations of $\pi_1\surface$ to representations of $\pi_1\overline{\surface}$. In other words, it defines a restriction map 
\[
\mathcal{R}\colon\Rep (\surface,\psl)\to \Rep(\overline{\surface},\psl)
\]
from the character variety of $\surface$ to the character variety of $\overline{\surface}$. The restriction map is surjective because it is always possible to lift a representation of $\pi_1\overline{\surface}$ to a representation of $\pi_1\surface$ by sending the new generators to the identity. The inclusion $\overline{\surface}\subset \surface$ also defines an injective group homomorphism $\PMod(\overline{\surface})\hookrightarrow \PMod(\surface)$ (see~\cite[Theorem~3.18]{mcg-primer}) whose image is the subgroup of $\PMod(\surface)$ generated by Dehn twists along simple closed curves of $\surface$ that are entirely contained in $\overline{\surface}$. This subgroup acts on $\Rep (\surface,\psl)$ and the restriction map $\mathcal{R}$ is equivariant with respect to this action and the standard action of $\PMod(\overline{\surface})$ on $\Rep(\overline{\surface},\psl)$.

There is a DT component associated to $\overline\surface$ with peripheral angle
\[
\overline{\alpha}=\big(\alpha_1,\ldots, \alpha_{\overline{n}-1}, \alpha_{\overline{n}}+\cdots+\alpha_n-2\pi(n-\overline{n})\big)
\]
which we denote by $\RepDTarg{\overline{\alpha}}(\overline{\surface})$. When we restrict representations from $\RepDT$ to $\pi_1\overline{\surface}$, we observe that those that satisfy $\mu_i=0$ for every $i\in\overline{I}$ get mapped injectively into $\RepDTarg{\overline{\alpha}}(\overline{\surface})$. In other words, $\mathcal R$ restricts to an isomorphism 
\begin{equation}\label{eq:restriction-map-one-subsphere}
\mathcal{R}\colon \RepDT\cap\bigcap_{i\in I}\{\mu_i=0\}\xrightarrow{\cong} \RepDTarg{\overline{\alpha}}(\overline{\surface}).
\end{equation}
The fundamental group elements $b_1,\ldots, b_{\overline{n}-3}\in\pi_1\overline{\surface}$ defines a pants decomposition $\overline{\mathcal{B}}$ of $\overline{\surface}$ that is compatible with the geometric generators $(c_1,\ldots,c_{\overline{n}-1},b_{\overline{n}-2})$ of $\pi_1\overline{\surface}$. The corresponding regular fibers $\IntRepDTarg{\overline{\mathcal{B}}}{\overline{\alpha}}(\overline{\surface}) \subset \RepDTarg{\overline{\alpha}}(\overline{\surface})$ coincide with restrictions of elements of $A_{\overline I}\subset \RepDT$:
\[
\mathcal{R}(A_{\overline I})=\IntRepDTarg{\overline{\mathcal{B}}}{\overline{\alpha}}(\overline{\surface}).
\]
We would like to apply the induction hypothesis to $\mathcal{R}(\Orb_{\overline{I}})\subset \RepDTarg{\overline{\alpha}}(\overline{\surface})$. In principle, even though $\Orb_{\overline{I}}$ is infinite, it is not clear that $\mathcal{R}(\Orb_{\overline{I}})$ contains a point with infinite $\PMod(\overline{\surface})$-orbit. This is what we are going to show.

\begin{rmk}
It was proven in~\cite{arnaud-sam} that a DT component contains at most finitely many finite orbits, which implies that any infinite subset of a DT component necessarily contains a point whose mapping class group orbit is infinite. In order to make the present paper independent from ~\cite{arnaud-sam} we shall not rely on the mentioned result to show Claim~\ref{claim:infinite-sub-orbit}.
\end{rmk}

\begin{claim}\label{claim:infinite-sub-orbit}
At least one point of $\mathcal{R}(\Orb_{\overline{I}})$ has an infinite $\PMod(\overline{\surface})$-orbit inside $\RepDTarg{\overline{\alpha}}(\overline{\surface})$.
\end{claim}
\begin{proofclaim}
Recall that $\mathcal{R}(\Orb_{\overline{I}})$ is actually a subset of $\IntRepDTarg{\overline{\mathcal{B}}}{\overline{\alpha}}(\overline{\surface})$. So, for every point $[\phi]$ of $\mathcal{R}(\Orb_{\overline{I}})$, we can use Corollary~\ref{cor:cotangent-space-generated-by-beta_i-delta_i-epsilon_i} and Remark~\ref{rem:cotangent-space-generated-by-beta_i-delta_i-epsilon_i} to pick a curve $z_i\in\{d_i,e_i\}$ for every $i=1,\ldots, \overline{n}-3$ such the the differentials of the angle functions associated to $b_1,\ldots,b_{\overline{n}-3}, z_1,\ldots, z_{\overline{n}-3}$ generate the cotangent space to $\RepDTarg{\overline{\alpha}}(\overline{\surface})$ at $[\phi]$. Since there are only finitely many ways to pick the curves $z_i$ and $\mathcal{R}(\Orb_{\overline{I}})$ is infinite, we have made the same choice of $z_i$ for infinitely many points of $\mathcal{R}(\Orb_{\overline{I}})$. In other words, there exists an infinite subset $\Orb_{\overline{I}}'\subset \Orb_{\overline{I}}$ for which the cotangent space to $\RepDTarg{\overline{\alpha}}(\overline{\surface})$ at every point of $\mathcal{R}(\Orb_{\overline{I}}')$ is generated by the differentials of the angle functions associated to the same curves $b_1,\ldots,b_{\overline{n}-3}, z_1,\ldots, z_{\overline{n}-3}$. In particular, the corresponding Dehn twists $\tau_{b_i}$ and $\tau_{z_i}$ have no fixed points in $\mathcal{R}(\Orb_{\overline{I}}')$. The respective angle functions associated to the curves $b_i$ and $z_i$ are denoted by $\beta_i$ and $\zeta_i$.

Corollary~\ref{cor:finitely-many-rational-angles} provides us with a finite set $\rational\subset \pi\Q$ of all rational rotation angles in the image of any point inside $\Orb([\rho])$. Since $\rational$ is a finite set, Corollary~\ref{cor:finite-intersection-b-orbits-and-d-orbits} applied to $\RepDTarg{\overline{\alpha}}(\overline{\surface})$ implies that
\begin{equation}\label{eq:rational-points}
\mathcal{T}=\IntRepDTarg{\overline{\mathcal{B}}}{\overline{\alpha}}(\overline{\surface})\cap \bigcap_{i=1}^{\overline{n}-3} \beta_i^{-1}(\rational)\cap \zeta_i^{-1}(\rational)
\end{equation}
is finite. So, we can find a point $[\phi]\in \mathcal{R}(\Orb_{\overline{I}}')\setminus \mathcal{T}$, because $\mathcal{R}(\Orb_{\overline{I}}')$ is infinite and $\mathcal{R}(\Orb_{\overline{I}}')\subset \IntRepDTarg{\overline{\mathcal{B}}}{\overline{\alpha}}(\overline{\surface})$. This point $[\phi]$ has the property that there exists an index $i\in\{1,\ldots,\overline{n}-3\}$ such that either $\beta_i([\phi])$ or $\zeta_i([\phi])$ is an irrational multiple of $[\phi]$. Since none of the two Dehn twists $\tau_{b_i}$ and $\tau_{z_i}$ fixes $[\phi]$ by construction, we conclude that the $\PMod(\overline{\surface})$-orbit of $[\phi]$ is infinite by Fact~\ref{fact:Dehn-twists-irrational-rotations}.
\end{proofclaim}

Let $[\phi]\in\mathcal{R}(\Orb_{\overline{I}})$ be any point with infinite $\PMod(\overline{\surface})$-orbit provided by Claim~\ref{claim:infinite-sub-orbit}. We shall denote by $\Orb_{\overline{\surface}}([\phi])\subset \RepDTarg{\overline{\alpha}}(\overline{\surface})$ its $\PMod(\overline{\surface})$-orbit.  
It is worth mentioning that $\Orb_{\overline{\surface}}([\phi])$ is not necessarily contained inside $\mathcal{R}(\Orb_{\overline{I}})$ because some orbit points in $\Orb_{\overline{\surface}}([\phi])$ may have extra degenerate triangles. However, $\Orb_{\overline{\surface}}([\phi])\cap \IntRepDTarg{\overline{\mathcal{B}}}{\overline{\alpha}}(\overline{\surface})\subset \mathcal{R}(\Orb_{\overline{I}})$.

The equivariance of the restriction map $\mathcal{R}$ (introduced in~\eqref{eq:restriction-map-one-subsphere}) implies
\[
\mathcal{R}^{-1}\left(\Orb_{\overline{\surface}}([\phi])\right)\subset \Orb \big(\mathcal{R}^{-1}([\phi])\big)=\Orb([\rho])
\]
which we use to write the following commutative diagram (in which the vertical arrows are inclusions of subsets).
\begin{equation*}
\begin{tikzcd}
\Orb_{\overline{\surface}}([\phi])\cap \IntRepDTarg{\overline{\mathcal{B}}}{\overline{\alpha}}(\overline{\surface}) \arrow[r, hook, "\mathcal{R}^{-1}"] \arrow[d, hook] & \Orb_{\overline{I}} \arrow[d, hook]\\
\IntRepDTarg{\overline{\mathcal{B}}}{\overline{\alpha}}(\overline{\surface}) \arrow{r}{\mathcal{R}^{-1}}[swap]{\cong} & A_{\overline{I}}
\end{tikzcd}
\end{equation*}
Since $\overline{n}<n$, the induction hypothesis (Assumption~\ref{induction-hypotheses}) applies to $\Orb_{\overline{\surface}}([\phi])$ and implies that it is a dense subset of $\RepDTarg{\overline{\alpha}}(\overline{\surface})$. This also means that $\Orb_{\overline{\surface}}([\phi])\cap \IntRepDTarg{\overline{\mathcal{B}}}{\overline{\alpha}}(\overline{\surface})$ is a dense subset of $\IntRepDTarg{\overline{\mathcal{B}}}{\overline{\alpha}}(\overline{\surface})$. Chasing the diagram above, we conclude that $\Orb_{\overline{I}}$ is a dense subset of $A_{\overline{I}}$. 

The density of $\Orb_{\overline{I}}$ inside $A_{\overline{I}}$ implies that $\Orb_{\overline{I}}$ contains infinitely many points with 
\[
\beta_{\overline{n}-3}\neq 2\pi-\alpha_{\overline{n}-1}.
\]
The image of all such points by the Dehn twist around the curve $c_{\overline{n}-1}c_{\overline{n}}$ belongs to $\Orb_{\overline{I}\setminus \{n-2-\vert I'\vert\}}$ by Claim~\ref{claim:un-degenerating-triangles} and Remark~\ref{rem:un-degenerating-triangles}. So, we conclude that $\Orb_{\overline{I}\setminus \{n-2-\vert I'\vert\}}$ is infinite which contradicts the minimality assumption on $I'$ (Assumption~\ref{ass:I'-minimal-cardinality}). This finishes the proof of Lemma~\ref{lem:infinite-orbits-intersect-regular-fibers-at-infinitely-many-points}.
\end{proof}

\subsection{Cotangent spaces at orbit points}\label{sec:cotangent-space}
We consider the curves $b_i,d_i,e_i$ and the associated angle functions $\beta_i,\delta_i,\varepsilon_i$ that we already used in Corollary~\ref{cor:cotangent-space-generated-by-beta_i-delta_i-epsilon_i}.  Corollary~\ref{cor:cotangent-space-generated-by-beta_i-delta_i-epsilon_i} says that at every point $[\phi]$ of $\IntRepDT{\mathcal{B}}(\surface)$, we can pick $\zeta_i\in\{\delta_i, \varepsilon_i\}$ for each $i=1,\ldots, n-3$ such that $\{(d\beta_i)_{[\phi]}, (d\zeta_i)_{[\phi]}:i=1,\ldots, n-3\}$ is a basis of the cotangent space to $\IntRepDT{\mathcal{B}}(\surface)$ at $[\phi]$. It will be useful for the sequel if we could find a point $[\rho']\in \Orb([\rho])\cap \IntRepDT{\mathcal{B}}(\surface)$ for which we can always pick $\zeta_i=\delta_i$. In other words, we want to find an orbit point $[\rho']$ such that 
\begin{equation}
T^\ast_{[\rho']} \IntRepDT{\mathcal{B}}(\surface)=\left\langle (d\beta_i)_{[\rho']}, (d\delta_i)_{[\rho']}:i=1,\ldots, n-3\right\rangle.
\end{equation}
Recall from Remark~\ref{rem:cotangent-space-generated-by-beta_i-delta_i-epsilon_i} that we can pick $\zeta_i$ to be $\delta_i$ over $\varepsilon_i$ as long as the angle coordinate $\gamma_i$ is not equal to $0$ or $\pi$. This means that we are looking for an orbit point $[\rho']\in \Orb([\rho])\cap\IntRepDT{\mathcal{B}}(\surface)$ satisfying $\gamma_i([\rho'])\notin\{0,\pi\}$ for every $i=1,\ldots, n-3$.

\subsubsection{A first infinite set}
Our next goal is to deal with orbit points having one of their action coordinate $\beta_i$ being equal to~$\pi$.

\begin{lem}\label{lem:getting-rid-of-beta=pi}
The set $\Orb([\rho])\cap\IntRepDT{\mathcal{B}}(\surface)$ contains infinitely many points with the property that if $\beta_i=\pi$, then $\gamma_i\notin\{0,\pi\}$. In other words, the set
\[
\mathcal{X}=\Orb([\rho])\cap\IntRepDT{\mathcal{B}}(\surface)\setminus \bigcup_{i=1}^{n-3}\{\beta_i=\pi\}\cap \left\{\gamma_i\in\{0,\pi\}\right\}
\]
is infinite.
\end{lem}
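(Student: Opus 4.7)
The plan is to argue by contradiction, assuming $\mathcal X$ is finite. By Lemma~\ref{lem:infinite-orbits-intersect-regular-fibers-at-infinitely-many-points}, $\Orb([\rho])\cap\IntRepDT{\mathcal B}(\surface)$ is infinite, so all but finitely many of its points lie in $\bigcup_{i=1}^{n-3}\{\beta_i=\pi,\gamma_i\in\{0,\pi\}\}$. Writing $\Pi([\psi]):=\{j:\beta_j([\psi])=\pi\}$, iterated pigeonhole produces a subset $\Pi_0\subseteq\{1,\ldots,n-3\}$ and an index $i_0\in\Pi_0$ such that infinitely many orbit points $[\psi]\in\Orb([\rho])\cap\IntRepDT{\mathcal B}(\surface)$ satisfy $\Pi([\psi])=\Pi_0$ and $\gamma_{i_0}([\psi])\in\{0,\pi\}$. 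I choose $\Pi_0$ minimal by inclusion, so that (using the assumed finiteness of $\mathcal X$) for every $\Pi'\subsetneq\Pi_0$, only finitely many orbit points in $\IntRepDT{\mathcal B}(\surface)$ satisfy $\Pi=\Pi'$.

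The main tool is the Dehn twist $\tau_{e_{i_0}}$. Since $e_{i_0}$ is disjoint from $b_j$ for every $j\neq i_0$, the commutativity of the corresponding Hamiltonian flows yields $\{\beta_j,\varepsilon_{i_0}\}=0$, and thus $\tau_{e_{i_0}}$ preserves each $\beta_j$ with $j\neq i_0$. On the other hand, Lemma~\ref{lem:poisson-bracket-vanish-iff-gamma-takes-some-values-general-case} shows that $\{\beta_{i_0},\varepsilon_{i_0}\}$ vanishes precisely when $\gamma_{i_0}\in\{\beta_{i_0}/2,\beta_{i_0}/2-\pi\}$; at $\beta_{i_0}=\pi$ this set equals $\{\pi/2,-\pi/2\}$, disjoint from $\{0,\pi\}$. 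Hence $\{\beta_{i_0},\varepsilon_{i_0}\}$ is nonzero on our stratum, which forces $\beta_{i_0}$ to be an analytic, non-constant function on every $e_{i_0}$-orbit meeting the stratum---in particular, to take value $\pi$ at only finitely many points of each such orbit.

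If some point $[\psi_0]$ in our infinite collection satisfies $\varepsilon_{i_0}([\psi_0])/\pi\notin\Q$, then by Fact~\ref{fact:Dehn-twists-irrational-rotations} the iterates $\{\tau_{e_{i_0}}^k[\psi_0]\}_{k\in\Z}$ are dense in the $e_{i_0}$-orbit of $[\psi_0]$. Since $\IntRepDT{\mathcal B}(\surface)$ is open and dense, all but finitely many iterates lie in it and satisfy $\beta_{i_0}\neq\pi$; combined with the invariance of each $\beta_j$ for $j\neq i_0$, these iterates form infinitely many orbit points with $\Pi=\Pi_0\setminus\{i_0\}\subsetneq\Pi_0$, contradicting the minimality of $\Pi_0$. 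Otherwise, all but finitely many $[\psi]$ in the collection have $\varepsilon_{i_0}([\psi])/\pi\in\Q$, and Corollary~\ref{cor:finitely-many-rational-angles} confines $\varepsilon_{i_0}$ to the finite set $\mathcal Q$. One then runs the analogous dichotomy with $\tau_{d_{i_0}}$---whose bracket $\{\beta_{i_0},\delta_{i_0}\}$ vanishes on the codimension-one locus $\{\gamma_{i_0}\in\{0,\pi\}\}$ but becomes nonzero as $d_{i_0}$-orbits generically leave it---and iterates the pigeonhole across every angle function until all values at every index are confined to $\mathcal Q$; at that point, infinitely many orbit points sharing a common tuple $(\beta_j,\zeta_j)_{j=1}^{n-3}$ contradict the at-most-$2^{n-3}$-to-one parametrization of Corollary~\ref{cor:finite-intersection-b-orbits-and-d-orbits}. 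The principal technical obstacle is this rational fallback: ensuring that the iterated pigeonhole actually confines every $\beta_j,\delta_j,\varepsilon_j$ to $\mathcal Q$, while the minimality of $\Pi_0$ rules out any residual escape to smaller $\pi$-patterns.
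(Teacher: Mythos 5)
Your first branch is sound in outline and rests on a genuinely nice observation: on the locus $\{\beta_{i_0}=\pi\}$ the vanishing set of $\{\beta_{i_0},\varepsilon_{i_0}\}$ is $\gamma_{i_0}\in\{\pi/2,-\pi/2\}$, which is disjoint from $\{0,\pi\}$, so the bracket is nonzero at every point of the bad stratum, and a single stratum point $[\psi_0]$ with $\varepsilon_{i_0}([\psi_0])\notin\pi\Q$ already produces infinitely many points of $\mathcal X$. Be careful with the justification, though: density of $\IntRepDT{\mathcal{B}}(\surface)$ does \emph{not} imply that all but finitely many terms of a dense sequence on a circle avoid its complement, and the claim that $\beta_{i_0}$ is ``analytic, hence takes the value $\pi$ finitely often'' is asserted without any analyticity being available in the paper. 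What actually closes this branch is a local argument: choose a small arc of the $e_{i_0}$-orbit around $[\psi_0]$ contained in the open set $\IntRepDT{\mathcal{B}}(\surface)$ on which $\beta_{i_0}$ is strictly monotonic (nonzero bracket), and let infinitely many $\tau_{e_{i_0}}$-iterates fall into it.

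The genuine gap is the second branch, and you have in effect flagged it yourself. Once every stratum point has $\varepsilon_{i_0}\in\mathcal{Q}$, your plan is to ``iterate the pigeonhole across every angle function until all values at every index are confined to $\mathcal{Q}$'' and then invoke Corollary~\ref{cor:finite-intersection-b-orbits-and-d-orbits}; but there is no mechanism for this confinement. For $j\neq i_0$ the curves $b_j$, $d_j$, $e_j$ are disjoint from $b_{i_0}$, so the corresponding Dehn twists preserve $\beta_{i_0}=\pi$: irrationality of $\beta_j$, $\delta_j$ or $\varepsilon_j$ therefore gives no escape from the stratum (at best it moves $\gamma_{i_0}$, which you do not analyze), hence no contradiction forcing these values into $\pi\Q$. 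Likewise $\{\beta_{i_0},\delta_{i_0}\}$ vanishes exactly on $\{\gamma_{i_0}\in\{0,\pi\}\}$, i.e.\ at precisely the points you have, so the ``analogous dichotomy with $\tau_{d_{i_0}}$'' does not get off the ground. This residual case is where the paper works hardest: Claim~\ref{claim:apply-tau_d-to-get-rid-of-beta=pi} shows by a direct hyperbolic-trigonometric argument that one application of $\tau_{d_{i_0}}$ exits $\bigcup_i\{\beta_i=\pi\}$ unless $\delta_{i_0}=\pi$ or $\delta_{i_0}=\beta_{i_0+1}-\beta_{i_0-1}$, and the exceptional case is then handled by cutting $\surface$ along $b_{i_0}$, applying the induction hypothesis (Assumption~\ref{induction-hypotheses}) to obtain density of a sub-orbit, and deriving an incompatibility between $\beta_{i_0-1}$ and $\beta_{i_0+1}$. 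Your proposal never invokes the induction hypothesis, and without a substitute for it the rational fallback remains open.
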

\begin{proof}
The proof is extensive and organized into multiple claims. For each $i=1,\ldots,n-3$, we introduce the sets
\[
A_i=\IntRepDT{\mathcal{B}}(\surface)\cap\{\beta_i=\pi\}\cap\left\{\gamma_i\in\{0,\pi\}\right\}
\]
and $A=\bigcup_i A_i$. With this notation, $\mathcal{X}= \Orb([\rho])\cap\IntRepDT{\mathcal{B}}\setminus A$. Recall from~\eqref{eq:beta_i-increasing-sequence}, that the action coordinates $\beta_1,\ldots,\beta_{n-3}$ always form an increasing sequence, meaning that for every point of $\RepDT$, there is at most one index $i=1,\ldots,n-3$ for which $\beta_i=\pi$. This means that $A$ is the disjoint union of its subsets $A_i$. We also introduce
\[
\Orb_i=\Orb([\rho])\cap A_i.
\]
Lemma~\ref{lem:infinite-orbits-intersect-regular-fibers-at-infinitely-many-points} says that $\Orb([\rho])\cap\IntRepDT{\mathcal{B}}$ is infinite. If $\Orb([\rho])\cap A$ was a finite set, then we would be done because $\mathcal{X}= \Orb([\rho])\cap\IntRepDT{\mathcal{B}}\setminus \big(\Orb([\rho])\cap A\big)$. From now on, we shall assume that $\Orb([\rho])\cap A$ is infinite, which implies that some $\Orb_j$ is itself infinite. The set $\Orb_j$ decomposes as the disjoint union of the two subsets $\Orb_j^0=\Orb_j\cap \{\gamma_j=0\}$ and $\Orb_j^\pi=\Orb_j\cap \{\gamma_j=\pi\}$.
\begin{claim}\label{claim:Orb_j^pi-infinite}
Since we are working under the assumption that $\Orb_j$ is infinite, $\Orb_j^\pi$ is infinite too.
\end{claim}
\begin{proofclaim}
As we explained before~\eqref{eq:action-of-tau_b_i}, applying the Dehn twist $\tau_{b_i}$ to a point of $\IntRepDT{\mathcal{B}}(\surface)$ changes its $i^{th}$ angle coordinate from $\gamma_i$ to $\gamma_i+\beta_i$. So, in the context of $\Orb_j$, the Dehn twist $\tau_{b_j}$ defines a bijection $\Orb_j^0\longleftrightarrow \Orb_j^\pi$.
\end{proofclaim}

The $\mathcal{B}$-triangle chain of any point in $A_j^\pi=A_j\cap \{\gamma_j=\pi\}$ has the following shape. The two triangles that share the vertex $B_j$ have a right angle at $B_j$ because $\beta_j=\pi$. Since $\gamma_i=\pi$, the point $B_j$ is the intersection of the geodesic lines $(B_{j-1}B_{j+1})$ and $(C_{j+1}C_{j+2})$ (they meet perpendicularly at $B_j$). It is also possible to have a configuration where $B_{j-1}=B_{j+1}$. The two points $C_{j+1}$ and $C_{j+2}$ are always distinct because $\gamma_j\neq 0$. 
\begin{center}
\begin{tikzpicture}[font=\sffamily]
    
\node[anchor=south west,inner sep=0] at (0,0) {\includegraphics[width=8cm]{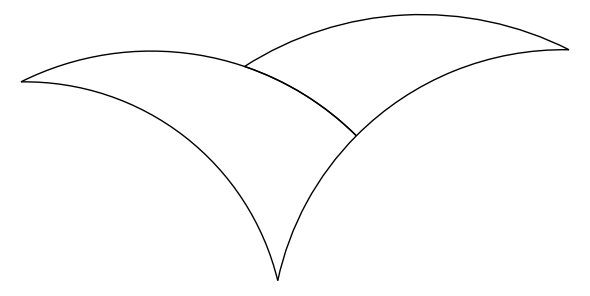}};

\begin{scope}
\fill (0.26,2.9) circle (0.07) node[left]{$B_{j+1}$};
\fill (3.3,3.15) circle (0.07) node[above]{$B_{j-1}$};
\fill (7.65,3.35) circle (0.07) node[right]{$C_{j+1}$};
\fill (4.85,2.18) circle (0.07) node[below right]{$B_j$};
\fill (3.77,.25) circle (0.07) node[right]{$C_{j+2}$};
\fill (2.3,2.3) circle (0.07) node[below]{$D_{j}$};
\end{scope}

\draw[mauve, thick] (7.1,3.3) arc (200:160:.4) node[midway, left]{\small $\pi-\alpha_{j+1}/2$};
\draw[mauve, thick] (3.9,.7) arc (70:110:.4) node[at end, left]{\small $\pi-\alpha_{j+2}/2$};
\draw[thick] (2.7,2.7) arc (50:-45:.5) node[midway, right]{\small $\pi-\delta_{j}/2$};
\draw[apricot, thick] (0.8,2.9) arc (-10:16:.5) node[midway, right]{\small $\pi-\beta_{j+1}/2$};
\draw[apricot, thick] (3.9,2.88) arc (-25:40:.5) node[midway, right]{\small $\beta_{j-1}/2$};

\draw[dashed] (3.3,3.15) to[bend right=5] (2.4,2.4);
\end{tikzpicture}
\end{center}

As in the proof of Lemma~\ref{lem:poisson-bracket-vanish-iff-gamma-takes-some-values-general-case}, we may consider the pants decomposition $\mathcal{D}_j$ of $\surface$ which is the standard pants decomposition associated to the cyclically permuted geometric generators 
\[
(c_{j+1},c_{j+2},\ldots,c_n,c_1,\ldots,c_j)
\]
of $\pi_1\surface$. The first triangle in the $\mathcal{D}_j$-triangle chain of any point in $A_j^\pi$ is always non-degenerate because $C_{j+1}\neq C_{j+2}$. It has vertices $(C_{j+1},C_{j+2},D_j)$ where $D_j$ is the unique point such that $\angle D_jC_{j+1}C_{j+2}=\pi-\alpha_{j+1}/2$, $\angle D_jC_{j+2}C_{j+1}=\pi-\alpha_{j+2}/2$, and such that the triangle $(C_{j+1},C_{j+2},D_j)$ is clockwise oriented. The point $D_j$ with these properties can be easily constructed: it is the intersection of the geodesic lines $(C_{j+1}B_{j-1})$ and $(C_{j+2}B_{j+1})$.

\begin{claim}\label{claim:tau_d_i-does-not-fix-any-point-in-A_jpi}
The Dehn twist $\tau_{d_j}$ along the curve $d_j=(c_{j+1}c_{j+2})^{-1}$ does not fix any point of~$A_j^\pi$.
\end{claim}
\begin{proofclaim}
By Fact~\ref{fact:characterization-fixed-point-Dehn-twist}, in order for $\tau_{d_j}$ to fix a point $[\phi]$ of $A_j^\pi$, its $\mathcal{D}_j$-triangle chain can only contain one non-degenerate triangle: the triangle $(C_{j+1},C_{j+2},D_j)$. This would mean that $D_j=C_1=\cdots=C_j=C_{j+3}=\cdots=C_n$, implying $D_j=B_{j-1}=B_{j+1}$. So, the $\mathcal{B}$-triangle chain of $[\phi]$ would only contain two non-degenerate triangles which is impossible since we are assuming $n\geq 5$ and $[\phi]\in\IntRepDT{\mathcal{B}}(\surface)$ by definition of $A_j^\pi$.
\end{proofclaim}

\begin{claim}\label{claim:apply-tau_d-to-get-rid-of-beta=pi}
The Dehn twist $\tau_{d_j}$ maps every point of $A_j^\pi$ with $\delta_j\neq \pi$ and $\delta_j-\beta_{j+1}+\beta_{j-1}\neq 0$ to a point inside $\IntRepDT{\mathcal{B}}(\surface)$ with $\beta_i\neq \pi$ for every $i=1,\ldots, n-3$. In particular, every point of $\Orb_j^\pi$ with $\delta_j\neq \pi$ and $\delta_j-\beta_{j+1}+\beta_{j-1}\neq 0$ is mapped inside $\mathcal{X}$ by $\tau_{d_j}$.
\end{claim}
\begin{proofclaim}
Let $[\phi]$ be a point of $A_j^\pi$ with $\delta_j\neq \pi$ and $\delta_j-\beta_{j+1}+\beta_{j-1}\neq 0$. 

First, we use the assumption $\delta_j\neq \pi$ to prove that $\tau_{d_j}.[\phi]$ lies inside $\IntRepDT{\mathcal{B}}(\surface)$. Recall that $\tau_{d_j}$ acts on the $\mathcal{B}$-triangle chain of $[\phi]$ by rotating the exterior vertices $C_{j+1}$ and $C_{j+2}$ clockwise around $D_j$ by an angle $\delta_j$ and leaves all the other exterior vertices fixed. In particular, it does not move the shared vertices $B_{j-1}$ and $B_{j+1}$. This means that the $\mathcal{B}$-triangle chain of $\tau_{d_j}.[\phi]$ is degenerate only when the rotation induced by $\tau_{d_j}$ maps $C_{j+1}$ to $B_{j-1}$ or if it maps $C_{j+2}$ to $B_{j+1}$. This is, however, impossible as long as $\delta_j\neq \pi$.

Next, we prove that the condition $\delta_j-\beta_{j+1}+\beta_{j-1}\neq 0$ implies $\beta_i(\tau_{d_j}\!\cdot\![\phi])\neq \pi$ for every $i=1,\ldots,n-3$. Since the curves $d_j$ and $b_i$ are disjoint when $i\neq j$, it holds that $\beta_i(\tau_{d_j}\!\cdot\![\phi])=\beta_i([\phi])\neq \pi$ when $i\neq j$. The tricky part is to prove that $\beta_j(\tau_{d_j}\!\cdot\![\phi])\neq \pi$. Assume for the sake of contradiction that $\beta_j(\tau_{d_j}\!\cdot\![\phi])=\beta_j([\phi])= \pi$. The assumption that $\delta_j-\beta_{j+1}+\beta_{j-1}\neq 0$ implies that $B_{j-1}\neq B_{j+1}$ for otherwise the two points would coincide with $D_j$ and we would have $\pi-\delta_j/2=\beta_{j-1}/2+\pi-\beta_{j+1}/2$. Since $B_{j-1}\neq B_{j+1}$, there are two possible configurations: either $B_{j-1}$ lies strictly between $B_{j+1}$ and $B_j$, or $B_{j+1}$ lies strictly between $B_{j-1}$ and $B_j$. We shall assume that we are in the first configuration; the other configuration can be treated with similar arguments. In that case, $D_j$ lies strictly between $B_{j+1}$ and $C_{j+2}$. Denote by $C_{j+2}^{\mathrm{new}}$ the exterior vertex in the $\mathcal{B}$-triangle chain of $\tau_{d_j}\!\cdot\![\phi]$ obtained from $C_{j+2}$ by a clockwise rotation of angle $\delta_j$ around $D_j$. Since we are assuming that $\beta_j(\tau_{d_j}\!\cdot\![\phi])=\beta_j([\phi])$ and since we know that $\beta_{j+1}$ remains unchanged by $\tau_{d_j}$, the triangle with vertices $(B_j,C_{j+2},B_{j+1})$ in the $\mathcal{B}$-triangle chain of $[\phi]$ is isometric to its counterpart in the $\mathcal{B}$-triangle chain of $\tau_{d_j}\!\cdot\![\phi]$. In particular, $d(B_{j+1},C_{j+2})=d(B_{j+1},C_{j+2}^{\mathrm{new}})$. The triangle inequality gives
\begin{align*}
d(B_{j+1},C_{j+2}^{\mathrm{new}})&\leq d(B_{j+1},D_j)+d(D_j,C_{j+2}^{\mathrm{new}})\\
&=d(B_{j+1},D_j)+d(D_j,C_{j+2})\\
&= d(B_{j+1},C_{j+2}).
\end{align*}
We conclude that $d(B_{j+1},C_{j+2}^{\mathrm{new}})= d(B_{j+1},D_j)+d(D_j,C_{j+2}^{\mathrm{new}})$ which is only possible when $C_{j+2}^{\mathrm{new}}$ lies on the hyperbolic ray $[B_{j+1}D_j)$ (recall that $B_{j+1}\neq D_j$). This is a contradiction as $\delta_j\in (0,2\pi)$, so we conclude that $\beta_j(\tau_{d_j}\!\cdot\![\phi])\neq \pi$ as desired.
\end{proofclaim}

Claim~\ref{claim:apply-tau_d-to-get-rid-of-beta=pi} helps us in the following sense. If $\Orb_j^\pi$ contains finitely many points with $\delta_j= \pi$ or $\delta_j-\beta_{j+1}+\beta_{j-1}= 0$, then it contains infinitely many point with $\delta_j\neq \pi$ and $\delta_j-\beta_{j+1}+\beta_{j-1}\neq 0$ because it is an infinite set by Claim~\ref{claim:Orb_j^pi-infinite}. In that case, Claim~\ref{claim:apply-tau_d-to-get-rid-of-beta=pi} applies and produces infinitely many points in $\mathcal{X}$, proving Lemma~\ref{lem:getting-rid-of-beta=pi}. So, we are reduced to consider the case where $\Orb_j^\pi$ contains infinitely many points with $\delta_j= \pi$ or $\delta_j-\beta_{j+1}+\beta_{j-1}= 0$. To show the existence of infinitely many points in $\mathcal{X}$ in that case, we shall use the induction hypothesis (Assumption~\ref{induction-hypotheses}). We shall proceed as follow.

We follow the same strategy as in the proof of Claim~\ref{claim:infinite-sub-orbit}. We first use Corollary~\ref{cor:cotangent-space-generated-by-beta_i-delta_i-epsilon_i} and Remark~\ref{rem:cotangent-space-generated-by-beta_i-delta_i-epsilon_i} to find an infinite subset 
\begin{equation}\label{eq:definition-Orb'}
(\Orb_j^\pi)'\subset \Orb_j^\pi\cap\big(\{\delta_j=\pi\}\cup\{\delta_j=\beta_{j+1}-\beta_{j-1}\}\big)   
\end{equation}
such that the cotangent space to $\RepDT$ at every point of $(\Orb_j^\pi)'$ is generated by the differentials of the angle functions associated to the \emph{same} curves $b_1,\ldots,b_{n-3},z_1,\ldots,z_{n-3}$ where $z_i\in \{d_i,e_i\}$. As usual, we shall write $\zeta_i$ for the angle function associated to the curve~$z_i$.

We let $\rational\subset\pi\Q$ denote the finite set of all rational rotation angles in the image of any point inside $\Orb([\rho])$ (Corollary~\ref{cor:finitely-many-rational-angles}). Corollary~\ref{cor:finite-intersection-b-orbits-and-d-orbits} implies that the set
\begin{equation}\label{eq:rational-points2}
\mathcal{T}=\IntRepDT{\overline{\mathcal B}}\cap\left(\beta_j^{-1}(\rational)\cap\delta_j^{-1}(\rational)\right)\cap\bigcap_{i\neq j} \beta_i^{-1}(\rational)\cap \zeta_i^{-1}(\rational)
\end{equation}
is finite. So, there exists a point $[\phi]\in (\Orb_j^\pi)' \setminus \mathcal{T}$ because we are assuming that $(\Orb_j^\pi)'$ is infinite. 

\begin{claim}\label{claim:index-k}
There exists an index $k\neq j$ such that either $\beta_k([\phi])$ or $\zeta_k([\phi])$ is an irrational multiple of $\pi$.
\end{claim}
\begin{proofclaim}
Since $[\phi]\in (\Orb_j^\pi)'\subset A_j^\pi$, it holds that $\beta_j([\phi])=\pi\in \pi\Q$. Recall that $[\phi]\notin \mathcal{T}$ by construction, so either there exists an index $k\neq j$ such that $\beta_k([\phi])$ or $\zeta_k([\phi])$ is an irrational multiple of $\pi$ (and the claim is proven), or $\delta_j([\phi])\notin\pi\Q$. When the latter occurs, then $\delta_j([\phi])$ is equal to $\beta_{j+1}([\phi])-\beta_j([\phi])$ because it cannot be equal to $\pi$. If $\delta_j([\phi])=\beta_{j+1}([\phi])-\beta_{j-1}([\phi])$ and $\delta_j([\phi])\notin \pi\Q$, then one of $\beta_{j+1}([\phi])$ and $\beta_{j-1}([\phi])$ must be an irrational multiple of $\pi$ which also proves the claim.
\end{proofclaim}

The curve $b_j$ separates $\surface$ into two sub-spheres $\surface_1$ and $\surface_2$, with respectively $j+2$ and $n-j$ punctures.
\begin{center}
\begin{tikzpicture}[scale=1.1, decoration={
    markings,
    mark=at position 0.6 with {\arrow{>}}}]
  \draw (0,-.5) arc(-90:-270: .25 and .5);
   \draw[black!40] (0,.5) arc(90:-90: .25 and .5);
   \draw[apricot] (4,.5) arc(90:270: .25 and .5) node[near end, above left]{$b_j$};
   \draw[lightapricot] (4,.5) arc(90:-90: .25 and .5);
  \draw (8,.5) arc(90:270: .25 and .5);
   \draw (8,.5) arc(90:-90: .25 and .5);
   \draw (8.7, 0) node{$\ldots$};
   \draw (-.8, 0) node{$\ldots$};
  
   \draw (.5,1) arc(180:0: .5 and .25) node[midway, above]{$c_{j}$};
  \draw[postaction={decorate}] (.5,1) arc(-180:0: .5 and .25);
   \draw (2.5,1) arc(180:0: .5 and .25)node[midway, above]{$c_{j+1}$};
   \draw[postaction={decorate}] (2.5,1) arc(-180:0: .5 and .25);
   \draw (4.5,1) arc(180:0: .5 and .25)node[midway, above]{$c_{j+2}$};
   \draw[postaction={decorate}] (4.5,1) arc(-180:0: .5 and .25);
   \draw (6.5,1) arc(180:0: .5 and .25)node[midway, above]{$c_{j+3}$};
   \draw[postaction={decorate}] (6.5,1) arc(-180:0: .5 and .25);
   
   \draw (0,.5) to[out=0,in=-90] (.5,1);
   \draw (1.5,1) to[out=-90,in=180] (2,.5);
   \draw (0,-.5) to[out=0,in=180] (2,-.5);
  
   \draw (2,.5) to[out=0,in=-90] (2.5,1);
   \draw (3.5,1) to[out=-90,in=180] (4,.5);
   \draw (2,-.5) to[out=0,in=180] (4,-.5);
  
   \draw (4,.5) to[out=0,in=-90] (4.5,1);
   \draw (5.5,1) to[out=-90,in=180] (6,.5);
   \draw (4,-.5) to[out=0,in=180] (6,-.5);
  
   \draw (6,.5) to[out=0,in=-90] (6.5,1);
   \draw (7.5,1) to[out=-90,in=180] (8,.5);
   \draw (6,-.5) to[out=0,in=180] (8,-.5);

   \draw[anchor=west] (3,-1.5) node{$\surface_1$};
   \draw[->] (3,-1.5) to[out=180, in=-45] (2,.1);
   \draw[anchor=east] (5,-1.5) node{$\surface_2$};
   \draw[->] (5,-1.5) to[out=0, in=-135] (6,.1);
 \end{tikzpicture}
 \end{center}
By choosing a system of geometric generators for both fundamental groups $\pi_1\surface_1$ and $\pi_1\surface_2$ and by mapping them to $(c_1,\ldots,c_{j+1},b_j)$ and $(b_j^{-1},c_{j+2},\ldots, c_n)$ respectively, we define two inclusions $\pi_1\surface_1\hookrightarrow\pi_1\surface$ and $\pi_1\surface_2\hookrightarrow\pi_1\surface$. 
The restrictions of $[\phi]$, say $[\phi\vert_{\surface_1}]$ and $[\phi\vert_{\surface_2}]$ both live in DT components, say $\RepDTarg{\theta_1}(\surface_1)$ and $\RepDTarg{\theta_2}(\surface_2)$, where $\theta_1=(\alpha_1,\ldots,\alpha_{j+1},\pi)$ and $\theta_2=(\pi,\alpha_{j+2},\ldots,\alpha_n)$ (recall that $\beta_j([\phi])=\pi$). More generally, there is a restriction map 
\begin{equation}\label{eq:restriction-map-two-subspheres}
\mathcal{R}\colon \RepDT\cap\{\beta_j=\pi\}\rightarrow \RepDTarg{\theta_1}(\surface_1)\times \RepDTarg{\theta_2}(\surface_2)
\end{equation}
which is surjective but it fails to be injective.\footnote{For instance, restricting the points in the regular fibers of the moment map $\IntRepDT{\mathcal{B}}\cap\{\beta_j=\pi\}\to \RepDTarg{\theta_1}(\surface_1)\times \RepDTarg{\theta_2}(\surface_2)$ defines a circle bundle where each fiber is parametrized by $\gamma_j$.}  
In this notation, $\mathcal{R}([\phi])=([\phi\vert_{\surface_1}],[\phi\vert_{\surface_2}])$. 

From the inclusions $\surface_1\subset \surface$ and $\surface_2\subset \surface$, we obtain two injective group homomorphisms $\PMod(\surface_1)\hookrightarrow\PMod(\surface)$ and $\PMod(\surface_2)\hookrightarrow\PMod(\surface)$ (see~\cite[Theorem~3.18]{mcg-primer}) which glue along $b_j$ to produce a combined injective group homomorphism
\begin{equation}\label{eq:inclusion-mcg-two-subspheres}
\PMod(\surface_1)\times \PMod(\surface_2)\hookrightarrow\PMod(\surface).
\end{equation}
Its image is the subgroup of $\PMod(\surface)$ generated by Dehn twists along simple closed curves that are entirely contained in $\surface_1$ or $\surface_2$. It preserves the locus $\RepDT\cap\{\beta_j=\pi\}$ inside $\RepDT$ and makes the restriction map $\mathcal{R}$~\eqref{eq:restriction-map-two-subspheres} equivariant with respect to the standard action of $\PMod(\surface_1)\times \PMod(\surface_2)$ on $\RepDTarg{\theta_1}(\surface_1)\times \RepDTarg{\theta_2}(\surface_2)$.

The orbits of the restrictions $[\phi\vert_{\surface_1}]$ and $[\phi\vert_{\surface_2}]$ under the action of $\PMod(\surface_1)$ and $\PMod(\surface_2)$ are written 
\[
\Orb([\phi\vert_{\surface_1}])\subset \RepDTarg{\theta_1}(\surface_1) \text{ and } \Orb([\phi\vert_{\surface_2}])\subset \RepDTarg{\theta_2}(\surface_2).
\]
In order to apply the induction hypothesis (Assumption~\ref{induction-hypotheses}), we need the following claim.

\begin{claim}\label{claim:infinitely-many-orbit-points-on-one-side}
If $k<j$, then $\Orb([\phi\vert_{\surface_1}])$ is infinite. If $k>j$, then $\Orb([\phi\vert_{\surface_2}])$ is infinite. Here $k$ is the integer from Claim~\ref{claim:index-k}.
\end{claim}
\begin{proofclaim}
Recall from Claim~\ref{claim:index-k} that $k$ was defined with the property that either $\beta_k([\phi])$ or $\zeta_k([\phi])$ is an irrational multiple of $\pi$. Let us first assume that $k<j$. In that case, the two curves $b_k$ and $z_k$ are contained in $\surface_1$. So, if $\beta_k([\phi])\in\R\setminus \pi\Q$, then iterating $\tau_{b_k}$ on $[\phi\vert_{\Sigma_1}]$ will produce infinitely many orbit points in $\Orb([\phi\vert_{\surface_1}])$ by Fact~\ref{fact:Dehn-twists-irrational-rotations}. In the same fashion, when $\zeta_k([\phi])\in\R\setminus \pi\Q$ we obtain infinitely many orbit points in $\Orb([\phi\vert_{\Sigma_1}])$ by iterating $\tau_{z_k}$ on $[\phi\vert_{\Sigma_1}]$. Note that both $\tau_{b_k}$ and $\tau_{z_k}$ do not fix $[\phi\vert_{\Sigma_1}]$ because we are assuming that the differentials $(d\beta_k)_{[\phi]}$ and $(d\zeta_k)_{[\phi]}$ are part of a basis of the cotangent space to $\RepDT$ at $[\phi]$ (this was the defining property of $(\Orb_j^\pi)'$~\eqref{eq:definition-Orb'}), so they do not vanish (Fact~\ref{fact:characterization-fixed-point-Dehn-twist}).

If instead $k>j$, then the curves $b_k$ and $z_k$ are contained in $\surface_2$ and the same argument applies \textit{mutatis mutandis}.
\end{proofclaim}

We are now ready to conclude the proof of Lemma~\ref{lem:getting-rid-of-beta=pi}. Thanks to Claim~\ref{claim:infinitely-many-orbit-points-on-one-side} and since $n>\max(n-j,j+2)$, we can apply the induction hypothesis (Assumption~\ref{induction-hypotheses}) to one of the restrictions $[\phi\vert_{\surface_1}]$ or $[\phi\vert_{\surface_2}]$ depending on the value of $k$. It shows that either $\Orb([\phi\vert_{\surface_1}])$ or $\Orb([\phi\vert_{\surface_2}])$ is dense in their respective DT component. We shall write 
\[
\Orb_{\surface_1}([\phi]) \text{ and } \Orb_{\surface_2}([\phi])
\]
for the orbits of $[\phi]$ under the action of $\PMod(\surface_1)$ and $\PMod(\surface_2)$ seen as subgroups of $\PMod(\surface)$. Note that both are sub-orbits of $\Orb([\phi])=\Orb([\rho])$ and satisfy 
\[
\mathcal{R}(\Orb_{\surface_1}([\phi]))=\Orb([\phi\vert_{{\surface_1}}])\times \{[\phi\vert_{\surface_2}]\}
\]
and similarly for $\surface_2$. It is worth pointing out that, even though every point in $\Orb_{\surface_1}([\phi])$ and $\Orb_{\surface_2}([\phi])$ has $\beta_j=\pi$, the value of $\gamma_j$ may vary; in fact, it is not preserved under the actions of $\PMod(\surface_1)$ and $\PMod(\surface_2)$.

We shall consider the case where $k<j$; the same arguments apply \textit{mutatis mutandis} when $k>j$. In that case, $\Orb([\phi\vert_{{\surface_1}}])$ is a dense subset of $\RepDTarg{\theta_1}(\surface_1)$ as we have just explained. In particular, it implies that $\Orb_{\surface_1}([\phi])\cap \IntRepDT{\mathcal{B}}(\surface)$ is infinite. Recall that by construction
\[
\Orb_{\surface_1}([\phi])\cap \IntRepDT{\mathcal{B}}(\surface)\subset \Orb([\rho])\cap \IntRepDT{\mathcal{B}}(\surface)\cap \{\beta_j=\pi\}.
\]
There are two possibilities: either $\Orb_{\surface_1}([\phi])\cap \IntRepDT{\mathcal{B}}(\surface)$ contains infinitely many points with $\gamma_j\notin\{0,\pi\}$. Since these points lie in $\mathcal{X}$ by definition then Lemma \ref{lem:getting-rid-of-beta=pi} follows. Otherwise, all but finitely many points of $\Orb_{\surface_1}([\phi])\cap \IntRepDT{\mathcal{B}}(\surface)$ satisfy $\gamma_j\in\{0,\pi\}$. Let us assume to be in the second case.

Now, observe the following. Every point in $A_j^\pi\cap \{\delta_j=\pi\}$ satisfies $\beta_j=\pi$, $\delta_j=\pi$, and $\gamma_j=\pi$. A simple trigonometric computation shows that, under these conditions, it is possible to relate the values of $\beta_{j-1}$ and $\beta_{j+1}$. In particular, for a fixed value of $\beta_{j-1}$, there are only finitely many values of $\beta_{j+1}$ that will guarantee $\beta_j=\pi$, $\delta_j=\pi$, and $\gamma_j=\pi$, and vice versa. Similarly, every point in $A_j^\pi\cap \{\delta_j=\beta_{j+1}-\beta_{j-1}\}$ has $B_{j-1}=B_{j+1}=D_j$. In particular, it holds that $d(B_j,B_{j-1})=d(B_j,B_{j+1})$ and both distances are directly related by trigonometry to the angles $\beta_{j-1}$ and $\beta_{j+1}$. So, again, for every value of $\beta_{j-1}$, there are only finitely many possible values of $\beta_{j+1}$ that will guarantee $B_{j-1}=B_{j+1}$, and vice versa. Let us say that $\beta_{j-1}$ and $\beta_{j+1}$ are \emph{compatible} if they are the coordinates of the same point in $A_j^\pi\cap(\{\delta_j=\pi\}\cup  \{\delta_j=\beta_{j+1}-\beta_{j-1}\})$. We have just explained that for every value of $\beta_{j-1}$, there are only finitely many compatible values of~$\beta_{j+1}$, and vice versa.

\begin{claim}\label{claim:incompatibility}
Recall that we are working under the assumption that all but finitely many points of $\Orb_{\surface_1}([\phi])\cap \IntRepDT{\mathcal{B}}(\surface)$ satisfy $\gamma_j\in\{0,\pi\}$. We claim that $\Orb_j^\pi$ contains infinitely many points for which $\beta_{j-1}$ is incompatible with $\beta_{j+1}$.
\end{claim}
\begin{proofclaim}
First, note that $\beta_{j+1}$ takes the same value at every point of $\Orb_{\surface_1}([\phi])\cap \IntRepDT{\mathcal{B}}(\surface)$. Since we are assuming that $\Orb([\phi\vert_{{\surface_1}}])$ is a dense subset of $\RepDTarg{\theta_1}(\surface_1)$, there are infinitely many points in $\Orb_{\surface_1}([\phi])\cap \IntRepDT{\mathcal{B}}(\surface)$ for which the value of $\beta_{j-1}$ is incompatible with $\beta_{j+1}$ (which is the same at every point of $\Orb_{\surface_1}([\phi])$). By using that all but finitely many points of $\Orb_{\surface_1}([\phi])\cap \IntRepDT{\mathcal{B}}(\surface)$ satisfy $\gamma_j\in\{0,\pi\}$, we conclude that $\Orb_{\surface_1}([\phi])\cap \IntRepDT{\mathcal{B}}(\surface)$ contains infinitely many points with $\gamma_j\in\{0,\pi\}$ and for which $\beta_{j-1}$ is incompatible with $\beta_{j+1}$. We can apply the Dehn twist $\tau_{b_j}$ to all those points with $\gamma_j=0$ to get $\gamma_j=\pi$ instead. When we apply $\tau_{b_j}$ to a point of $\Orb_{\surface_1}([\phi])\cap \IntRepDT{\mathcal{B}}(\surface)$, we might land outside that set. However, the image by $\tau_{b_j}$ of a point of $\Orb_{\surface_1}([\phi])\cap \IntRepDT{\mathcal{B}}(\surface)$ with $\gamma_j=0$ belongs to $\Orb_j^\pi$. Since $\tau_{b_j}$ does not affect the values of $\beta_{j-1}$ and $\beta_{j+1}$, incompatible values remain incompatible and we therefore produced infinitely many points in $\Orb_j^\pi$ for which $\beta_{j-1}$ is incompatible with $\beta_{j+1}$.
\end{proofclaim}

All the infinitely many points of $\Orb_j^\pi$ for which $\beta_{j-1}$ is incompatible with $\beta_{j+1}$ provided by Claim~\ref{claim:incompatibility} either have $\delta_j\neq \pi$ or $\delta_j\neq \beta_{j+1}-\beta_j$ by the definition of compatibility. Claim~\ref{claim:apply-tau_d-to-get-rid-of-beta=pi} applies and produce infinitely many points in $\mathcal{X}$ in that case too. This completes the proof of the Lemma~\ref{lem:getting-rid-of-beta=pi}.
\end{proof}

\subsubsection{A second infinite set}
We now refine Lemma~\ref{lem:getting-rid-of-beta=pi} as follows.

\begin{lem}\label{lem:getting-rid-of-gamma_i=0-or-pi}
The set
\[
\mathcal{X}'=\Orb([\rho])\cap\IntRepDT{\mathcal{B}}(\surface)\setminus \bigcup_{i=1}^{n-3}\left\{\gamma_i\in\{0,\pi\}\right\}
\]
is infinite.
\end{lem}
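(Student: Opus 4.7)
The plan is to bootstrap from Lemma~\ref{lem:getting-rid-of-beta=pi} by modifying each point of $\mathcal{X}$ with an appropriate product of Dehn twists $\tau_{b_i}$ along the pants curves. The crucial observation is the contrapositive of the defining property of $\mathcal{X}$: if $[\rho']\in\mathcal{X}$ and $\gamma_i([\rho'])\in\{0,\pi\}$, then necessarily $\beta_i([\rho'])\neq \pi$. Since the Dehn twist $\tau_{b_i}$ only affects the coordinate $\gamma_i$---and shifts it by $\beta_i$ modulo $2\pi$ according to~\eqref{eq:action-of-tau_b_i}---this is exactly what we need to move $\gamma_i$ out of $\{0,\pi\}$ without disturbing the other coordinates.

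Concretely, for every $[\rho']\in\mathcal{X}$ I will set $S([\rho'])=\{i:\gamma_i([\rho'])\in\{0,\pi\}\}$ and define
\[
f([\rho'])=\Bigl(\prod_{i\in S([\rho'])}\tau_{b_i}\Bigr)\cdot [\rho'],
\]
where the order of the product is irrelevant because the curves $b_i$ are pairwise disjoint and the corresponding Dehn twists therefore commute. For $i\in S([\rho'])$, the coordinate $\gamma_i$ becomes $\gamma_i+\beta_i\pmod{2\pi}$; since $\beta_i\in(0,2\pi)\setminus\{\pi\}$ by the above observation, the new value lies in $(0,2\pi)\setminus\{0,\pi\}$ (checking the two cases $\gamma_i=0$ and $\gamma_i=\pi$ separately). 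For $i\notin S([\rho'])$, $\gamma_i$ is unchanged and still not in $\{0,\pi\}$. The action also preserves $\Orb([\rho])$ and the moment map $\mu$, hence preserves $\IntRepDT{\mathcal{B}}(\surface)$; notice that $\tau_{b_i}$ acts by~\eqref{eq:action-of-tau_b_i} rather than fixing $[\rho']$ because all components of $\mu$ are strictly positive on $\IntRepDT{\mathcal{B}}(\surface)$, so $(d\beta_i)_{[\rho']}\neq 0$ by Fact~\ref{fact:characterization-fixed-point-Dehn-twist}. Therefore $f([\rho'])\in\mathcal{X}'$.

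To finish, I will check that $f$ is at most $2^{n-3}$-to-one: the preimage of any $[\sigma]\in\mathcal{X}'$ is contained in the finite set $\{(\prod_{i\in T}\tau_{b_i}^{-1})\cdot[\sigma]:T\subseteq\{1,\ldots,n-3\}\}$, because the subset $S([\rho'])\subseteq\{1,\ldots,n-3\}$ used to define $f$ takes only finitely many values. Since Lemma~\ref{lem:getting-rid-of-beta=pi} guarantees that $\mathcal{X}$ is infinite, the image $f(\mathcal{X})\subseteq\mathcal{X}'$ is infinite too.

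I do not anticipate any genuine obstacle here: once the contrapositive reformulation of the $\mathcal{X}$ condition is in hand, the argument is pure bookkeeping, exploiting that the commuting family $\{\tau_{b_i}\}$ lets us independently correct each angular coordinate.
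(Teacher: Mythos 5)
Your proposal is correct and is essentially the paper's own argument: both exploit that on $\mathcal{X}$ the condition $\gamma_i\in\{0,\pi\}$ forces $\beta_i\neq\pi$, apply the commuting twists $\tau_{b_i}$ exactly for those indices to shift $\gamma_i$ by $\beta_i$ via~\eqref{eq:action-of-tau_b_i}, and conclude via the at-most-$2^{n-3}$-to-one count. Your extra check that $\tau_{b_i}$ genuinely acts by that formula on $\IntRepDT{\mathcal{B}}(\surface)$ (since $(d\beta_i)\neq 0$ there) is a minor point the paper leaves implicit.
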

\begin{proof}
Lemma~\ref{lem:getting-rid-of-beta=pi}  tells us that the set
\[
\mathcal{X}=\Orb([\rho])\cap\IntRepDT{\mathcal{B}}(\surface)\setminus \bigcup_{i=1}^{n-3}\{\beta_i=\pi\}\cap \left\{\gamma_i\in\{0,\pi\}\right\}
\]
is infinite. Pick an element $[\phi]$ of $\mathcal{X}$. For each $i=1,\ldots,n-3$ with $\gamma_i([\phi])\in\{0,\pi\}$, apply $\tau_{b_i}$ to $[\phi]$. Note that by definition of $\mathcal{X}$, if $\gamma_i([\phi])\in\{0,\pi\}$, then $\beta_i([\phi])\neq \pi$. This ensures us that whenever $\gamma_i([\phi])\in\{0,\pi\}$, then $\gamma_i(\tau_{b_i}\!\cdot\![\phi])\notin\{0,\pi\}$ since $\gamma_i(\tau_{b_i}\!\cdot\![\phi])=\gamma_i([\phi])+\beta_i([\phi])$.

We just explained how to find, for every element $[\phi]$ of $\mathcal{X}$, a binary vector $\omega\in\{0,1\}^{n-3}$ such that $(\prod_i \tau_{b_i}^{\omega_i})\!\cdot\![\phi]\in\mathcal{X}'$ (recall that the Dehn twists $\tau_{b_i}$ commute because the curves $b_i$ are disjoint). This defines a map $\Omega\colon\mathcal{X}\to \mathcal{X'}$. Since the number of different binary vectors $\omega$ is $2^{n-3}$, any point in the image of $\Omega$ has at most $2^{n-3}$ pre-images. Since $\mathcal{X}$ is infinite, this shows that the image of $\Omega$ is infinite, and so $\mathcal{X}'$ is infinite too. 
\end{proof}

\subsection{Choosing a point $[\rho']$ in $\Orb([\rho])$}\label{sec:choosing-a-point}
We are now ready to pick our preferred orbit point $[\rho']\in \Orb([\rho])$. By construction (and that was the whole reason for proving Lemma~\ref{lem:getting-rid-of-gamma_i=0-or-pi}), the cotangent space to $\RepDT$ at every point of $\mathcal{X}'$ is generated by $d\beta_i$ and $d\delta_i$ with $i=1,\ldots,n-3$. This is a consequence of Corollary~\ref{cor:cotangent-space-generated-by-beta_i-delta_i-epsilon_i} and Remark~\ref{rem:cotangent-space-generated-by-beta_i-delta_i-epsilon_i}. We can play the usual trick and use Corollaries~\ref{cor:finitely-many-rational-angles} and~\ref{cor:finite-intersection-b-orbits-and-d-orbits} to identify a finite set
\[
\mathcal{T}=\IntRepDTarg{\mathcal{B}}{\alpha}(\surface)\cap \bigcap_{i=1}^{n-3} \beta_i^{-1}(\rational)\cap \delta_i^{-1}(\rational).
\]
We choose $[\rho']$ to be any point inside $\mathcal{X}'\setminus \mathcal{T}$. Such a point always exists because $\mathcal{X}'$ is infinite by Lemma~\ref{lem:getting-rid-of-gamma_i=0-or-pi} and $\mathcal{T}$ is a finite set. The chosen point $[\rho']$ (as any other element of $\mathcal{X}'\setminus \mathcal{T}$) enjoys the following properties:
\begin{enumerate}
    \item $[\rho']\in \IntRepDT{\mathcal{B}}(\surface)$ and $\gamma_i([\rho'])\notin\{0,\pi\}$ for every $i=1,\ldots, n-3$ by definition of $\mathcal{X}'$,\label{property:1}
    \item $\{\beta_i,\delta_i\}([\rho'])\neq 0$ for every $i=1,\ldots,n-3$ by Lemma~\ref{lem:poisson-bracket-vanish-iff-gamma-takes-some-values-general-case} because $\gamma_i([\rho'])\notin\{0,\pi\}$,\label{property:2}
    \item $T^\ast_{[\rho']}\RepDT=\langle (d\beta_i)_{[\rho']}, (d\delta_i)_{[\rho']}: i=1,\ldots, n-3\rangle$ because of Remark~\ref{rem:cotangent-space-generated-by-beta_i-delta_i-epsilon_i} and Property~\eqref{property:2},\label{property:3}
    \item There exists an index $j\in\{1,\ldots,n-3\}$ such that $\beta_j([\rho'])\in \R\setminus \pi\Q$ or $\delta_j([\rho'])\in \R\setminus \pi\Q$ by definition of $\mathcal{T}$.\label{property:4}
\end{enumerate}

\subsection{Constructing the open set $U_{[\rho']}$}\label{sec:open-set-U}
The goal of this section is to construct an open set around our preferred orbit point $[\rho']$ constructed in Section~\ref{sec:choosing-a-point} that is entirely contained in $\overline{\Orb([\rho])}$. To simplify the notation, for any two real numbers $(t,s)$ close to $0$ and $i\in\{1,\ldots,n-3\}$, we will write
\[
\Phi_i^{(t,s)}=\Phi_{d_i}^{t}\circ\Phi_{b_i}^{s}
\]
for the composition of the Hamiltonian flows associated with the curves $d_i$ and $b_i$. Recall from Property~\eqref{property:4} that there exists an index $j\in\{1,\ldots,n-3\}$ such that either $\beta_j([\rho'])$ or $\delta_j([\rho'])$ is an irrational multiple of $\pi$.
As we explained in Section~\ref{sec:local-parameterization}, since the cotangent space at $[\rho']$ is generated by the differentials $(d\beta_i)_{[\rho']}$ and $(d\delta_i)_{[\rho']}$ by Property~\eqref{property:3}, there exists an open interval $I\subset \R$ containing~$0$ such that the map $\Psi\colon I^{2(n-3)}\to \RepDT$ defined by sending $(t_1,\ldots, t_{n-3}, s_1, \ldots, s_{n-3})$ to
\[
\Phi_{j-1}^{(t_{j-1},s_{j-1})}\circ\cdots\circ\Phi_{1}^{(t_{1},s_1)}\circ \Phi_{n-3}^{(t_{n-3},s_{n-3})}\circ\cdots\circ \Phi_{j}^{(t_j,s_j)}([\rho'])
\]
is a diffeomorphism onto its image $\Im(\Psi)$. The desired open set $U_{[\rho']}$ will ultimately be the image by $\Psi$ of a smaller cube $(I')^{2(n-3)}$, where $I'\subset I$ is an open sub-interval that contains~$0$. In the definition of $\Psi$, we chose to compose the Hamiltonian flows in this specific order for convenience with respect to the upcoming arguments. It is important to start with the two Hamiltonian flows corresponding to the curves $b_j$ and $d_j$ for which either $\beta_j([\rho'])$ or $\delta_j([\rho'])$ is an irrational multiple of $\pi$.

Up to shrinking the interval $I$ a first time (by \emph{shrinking $I$} we mean replacing $I$ by an open sub-interval that still contains $0$), we may assume that every point in $\Im(\Psi)$ satisfies Properties~\eqref{property:1}---\eqref{property:3}. This is because they are open properties and, as we explained in the proof of Lemma~\ref{lem:permuting-Hamiltonian-flows}, we can always shrink $I$ so that $\Im(\Psi)$ is contained in any open neighborhood of $[\rho']$.

\subsubsection{A first slice}
Let $U_j\subset \Im(\Psi)$ be the set of images by $\Psi$ of points in $I^{2(n-3)}$ with $t_i=s_i=0$ when $i\neq j$. In other words,
\begin{equation}\label{eq:first-slice}
U_j=\left\{\Phi_{j}^{(t_j,s_j)}([\rho']):t_j,s_j\in I\right\}.
\end{equation}
Properties~\eqref{property:4} and~\eqref{property:2} tell us that either $\beta_j([\rho'])$ or $\delta_j([\rho'])$ is an irrational multiple of $\pi$ and $\{\beta_j,\delta_j\}([\rho'])\neq 0$. So, using similar arguments as in Section~\ref{sec:open-set-U-n=4} and shrinking $I$ further if necessary, we conclude that $U_j\subset \overline{\Orb([\rho])}$. 

\subsubsection{Extending the slice by increasing the index}\label{sec:increase}
Of course, $U_j$ is not an open subset of $\RepDT$; it is a 2-dimensional slice of $\Im(\Psi)$. In order to construct the desired open set $U_{[\rho']}$, we shall ``enlarge'' $U_j$ until we get an open neighborhood of $[\rho']$. We shall do that by working with the fundamental group elements $d_1,\ldots,d_{n-3}$ and the associated pants decompositions $\mathcal{D}_1,\ldots, \mathcal{D}_{n-3}$ of $\Sigma$ which we already considered in Lemma~\ref{lem:poisson-bracket-vanish-iff-gamma-takes-some-values-general-case}. Recall that the first triangle in the $\mathcal{D}_i$-triangle chain of any point in $\RepDT$ has vertices $(C_{i+1},C_{i+2},D_i)$.

\begin{claim}\label{claim:D_i-neq-C_i+3}
If $[\phi]$ is any point of $\IntRepDT{\mathcal{B}}(\surface)$ with $\gamma_i([\phi])\neq 0$ for some index $i$ (typically a point in $\Im(\Psi)$), then only finitely many points along the $b_i$-orbit of $[\phi]$ have $D_i=C_{i+3}$.
\end{claim}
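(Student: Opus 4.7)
The plan is to combine real-analyticity of the geometric data with a hyperbolic rigidity argument, reducing the claim to the dichotomy that a real-analytic subset of a real-analytic circle is either finite or the whole circle. Since $[\phi]\in\IntRepDT{\mathcal{B}}(\surface)$, every component of the moment map is non-zero at $[\phi]$, so Fact~\ref{fact:characterization-fixed-point-Dehn-twist} gives $(d\beta_i)_{[\phi]}\neq 0$ and the $b_i$-orbit $O$ of $[\phi]$ is a smooth real-analytic embedded circle. After choosing compatible representatives $\rho_t$ along $O$ (for instance by pinning the fixed point $B_i$ of $\rho_t(b_i)$ to a common point of $\HH^2$ for every $t$), the assignments $t\mapsto D_i(t)$ and $t\mapsto C_{i+3}(t)$ become real-analytic, so the locus $\{D_i=C_{i+3}\}\cap O$ is a closed real-analytic subset of $O$ and is therefore either finite or all of $O$. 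It remains only to rule out the latter case.

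For this I would invoke the standard Goldman-twist description of the Hamiltonian flow $\Phi_{b_i}^{t}$ (as used throughout~\cite{deroin-tholozan, action-angle}): under the normalization above, $\Phi_{b_i}^{t}$ keeps $\rho_t(c_1),\ldots,\rho_t(c_{i+1})$ constant---so $C_{i+1}$ and $B_i$ are fixed as $t$ varies---while conjugating $\rho_t(c_{i+2}),\ldots,\rho_t(c_n)$ by a rotation $R_{B_i,s(t)}$ around $B_i$, where $s$ depends continuously and non-constantly on $t$ (non-constant because $O$ is a non-trivial circle). In particular,
\[
C_{i+2}(t)=R_{B_i,s(t)}\cdot C_{i+2}(0),\qquad C_{i+3}(t)=R_{B_i,s(t)}\cdot C_{i+3}(0).
\]
Let $T(X,Y)$ denote the isometry-equivariant ``triangle-completion'' map that sends an ordered pair of distinct points to the third vertex of the clockwise hyperbolic triangle with prescribed interior angles $\pi-\alpha_{i+1}/2$ at $X$ and $\pi-\alpha_{i+2}/2$ at $Y$. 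By construction $D_i(t)=T(C_{i+1},C_{i+2}(t))$, and equivariance of $T$ under orientation-preserving isometries yields
\[
R_{B_i,-s(t)}\cdot D_i(t)=T\bigl(R_{B_i,-s(t)}\cdot C_{i+1},\,C_{i+2}(0)\bigr).
\]

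If $D_i(t)=C_{i+3}(t)$ held for every $t\in O$, the left-hand side above would equal the constant point $C_{i+3}(0)$, forcing the equation $T(X,C_{i+2}(0))=C_{i+3}(0)$ to be satisfied by every $X$ on the circle around $B_i$ of radius $d(B_i,C_{i+1})$; this radius is strictly positive because non-degeneracy of the triangle $(B_{i-1},C_{i+1},B_i)$ in the $\mathcal{B}$-chain of $[\phi]$ forces $C_{i+1}\neq B_i$. However, $T(X,C_{i+2}(0))=C_{i+3}(0)$ has at most one solution in $X$: the prescribed angle at $C_{i+2}(0)$ pins $X$ to a specific geodesic ray emanating from $C_{i+2}(0)$, and the prescribed angle at $X$ then pins $X$ to a unique point of that ray. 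Since $R_{B_i,-s(t)}\cdot C_{i+1}$ takes infinitely many values as $t$ varies, this is a contradiction, and $\{D_i=C_{i+3}\}\cap O$ must be finite. The main technical hurdle I anticipate is writing down precisely the vertex-level Goldman-twist description of $\Phi_{b_i}^{t}$---that is, verifying rigorously that $B_i$ and $C_{i+1}$ stay fixed while $C_{i+2},\ldots,C_n$ rotate rigidly around $B_i$; once that is established, the rest of the argument is elementary hyperbolic rigidity together with the analyticity dichotomy.
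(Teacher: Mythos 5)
Your argument is correct and, at its core, it is the same as the paper's: after normalizing so that one side of the triangle chain stays put while the other rotates rigidly about $B_i$, the condition $D_i=C_{i+3}$ pins the moving vertex $C_{i+1}$ to a specific geodesic determined by the fixed data (via the prescribed angle $\pi-\alpha_{i+2}/2$ at $C_{i+2}$), and that geodesic meets the circle of radius $d(B_i,C_{i+1})>0$ about $B_i$ in finitely many points. The paper uses the dual normalization (the first $i$ triangles rotate, so $C_{i+2},C_{i+3}$ are literally constant) and stops at ``a geodesic line meets a circle in at most two points''; you use the ray plus the angle condition at $X$ to get at most one admissible $X$. Two remarks. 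First, the real-analyticity dichotomy is superfluous scaffolding: since $t\mapsto R_{B_i,-s(t)}\cdot C_{i+1}$ is injective on the orbit circle and your rigidity step shows $T(X,C_{i+2}(0))=C_{i+3}(0)$ has at most one solution $X$, you already get directly that at most one orbit point satisfies $D_i=C_{i+3}$ --- no need to first prove the locus is finite-or-everything and then exclude the second horn. Second, you should dispose of the case $C_{i+2}(0)=C_{i+3}(0)$ separately (the reference ray $[C_{i+2}C_{i+3})$ is then undefined); there the claim is vacuous because $\gamma_i\neq 0$ makes the triangle $(C_{i+1},C_{i+2},D_i)$ non-degenerate, so $D_i\neq C_{i+2}=C_{i+3}$, as the paper notes explicitly. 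The vertex-level description of the twist flow that you flag as the main hurdle is exactly the fact the paper also takes for granted from the triangle-chain parametrization, so you are on equal footing there.
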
 
\begin{proofclaim}
Since we are assuming that $\gamma_i([\phi])\neq 0$, the first triangle in the $\mathcal{D}_i$-triangle chain of $[\phi]$ is non-degenerate. It has vertices $(C_{i+1},C_{i+2},D_i)$ and interior angles $(\pi-\alpha_{i+1}/2,\pi-\alpha_{i+2}/2, \pi-\delta_i/2)$. Recall that the points on the $b_i$-orbit of $[\phi]$ can be represented by triangle chains where the first $i$ triangles are rotated around the shared vertex $B_i$ and the other triangles remain fixed. In particular, the points $C_{i+2}$ and $C_{i+3}$ do not move along the $b_i$-orbit of $[\phi]$. It is possible that $C_{i+2}=C_{i+3}$, but in that case $D_i\neq C_{i+3}$ because $D_i\neq C_{i+2}$. If $C_{i+2}\neq C_{i+3}$, then the geodesic line $(C_{i+2}C_{i+3})$ is well-defined and invariant along the $b_i$-orbit of $[\phi]$. 

If $[\phi']$ is a point on the $b_i$-orbit of $[\phi]$, we shall denote the vertices the associated triangle chains with a prime. Recall that $B_i'=B_i$, $C_{i+2}'=C_{i+2}$, and $C_{i+3}'=C_{i+3}$. So, in order for $[\phi']$ to satisfy $D_i'=C_{i+3}'$, the point $C_{i+1}'$ must belong to both:
\begin{itemize}
    \item The circle of radius $d(B_i,C_{i+1})$ centered at $B_i$ because $d(B_i,C_{i+1}')=d(B_i,C_{i+1})$ since $[\phi']$ belongs to the $b_i$-orbit of $[\phi]$.
    \item The geodesic line through $C_{i+2}$ that makes an angle $\pi-\alpha_{i+2}/2$ with the geodesic line through $C_{i+2}$ and $C_{i+3}$ because we are assuming $D_i'=C_{i+3}'$, $C_{i+2}'=C_{i+2}$, and $C_{i+3}'=C_{i+3}$. 
\end{itemize}
So there are at most two possibilities for the point $C_{i+1}'$. This proves the claim.
\end{proofclaim}

If we study the $\mathcal{D}_i$-triangle chain of a point $[\phi]\in\IntRepDT{\mathcal{B}}(\surface)$ with $\gamma_i([\phi])\neq 0$ and $D_i\neq C_{i+3}$, then not only the first triangle is non-degenerate (because $\gamma_i([\phi])\neq 0$), but also the second triangle because $D_i\neq C_{i+3}$. So, $(d\delta_i)_{[\phi]}\neq 0$ by Fact~\ref{fact:characterization-fixed-point-Dehn-twist} and there is a well-defined angle coordinate $\gamma_1^{\mathcal{D}_i}$ paired with $\delta_i$ which is defined as the angle between the geodesic rays $[D_iC_{i+3})$ and $[D_i,C_{i+2})$.
\begin{center}
\begin{tikzpicture}[font=\sffamily]
    
\node[anchor=south west,inner sep=0] at (0,0) {\includegraphics[width=6cm]{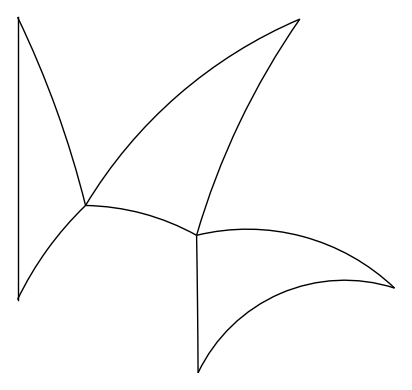}};

\begin{scope}
\fill (0.25,1.35) circle (0.07) node[left]{$B_{i-1}$};
\fill (1.25,2.7) circle (0.07) node[below]{$B_{i}$};
\fill (2.85,2.25) circle (0.07) node[left]{$B_{i+1}$};
\fill (2.87,0.3) circle (0.07) node[left]{$B_{i+2}$};
\fill (0.25,5.45) circle (0.07) node[left]{$C_{i+1}$};
\fill (4.35,5.4) circle (0.07) node[right]{$C_{i+2}$};
\fill (5.7,1.5) circle (0.07) node[right]{$C_{i+3}$};
\fill[mauve] (2.3,3.1) circle (0.07) node[left]{$D_{i}$};
\end{scope}

\draw[mauve] (0.25,5.45) to[bend left=15] (2.3,3.1);
\draw[mauve] (0.25,5.45) to[bend left=10] (4.35,5.4);
\draw[mauve] (2.3,3.1) to[bend left=10] (4.35,5.4);
\draw[mauve] (2.3,3.1) to[bend left=25] (5.7,1.5);

\draw[sky, thick] (1.6,3.2) arc (50:110:.5) node[midway, above]{\small $\gamma_i$};
\draw[mauve, thick] (0.8,5.05) arc (-40:10:.6) node[at end, above]{\small $\pi-\alpha_{i+1}/2$};
\draw[mauve, thick] (3.88,5) arc (230:172:.6) node[at end, above]{\small $\pi-\alpha_{i+2}/2$};
\draw[mauve, thick] (2.9,4) arc (60:125:1);
\draw[mauve] (2.18,4.6) node{\small $\pi-\delta_{i}/2$};
\draw[plum, thick] (2.9,3.05) arc (-5:58:.6) node[at end, right]{\small $\gamma_1^{\mathcal{D}_i}$};
\end{tikzpicture}
\end{center}

\begin{claim}\label{claim:Poisson-bracket-d_i-d_i+1}
If $[\phi]$ is a point with $C_{i+1}\neq C_{i+2}$ and $D_i\neq C_{i+3}$ for some index $i$, then 
\[
\{\delta_i,\delta_{i+1}\}([\phi])=0\quad\Leftrightarrow\quad \gamma_1^{\mathcal{D}_i}\in\{0,\pi\}.
\]
\end{claim}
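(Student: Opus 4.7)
The plan is to mimic the trigonometric argument used in the proof of Lemma~\ref{lem:poisson-bracket-vanish-iff-gamma-takes-some-values-general-case}, but now viewing $[\phi]$ through the pants decomposition $\mathcal{D}_i$. Under this perspective, $\delta_i$ plays the role of an action coordinate whose paired angle coordinate is $\gamma_1^{\mathcal{D}_i}$, and $d_{i+1}=(c_{i+2}c_{i+3})^{-1}$ plays the role that $d_1$ played in Lemma~\ref{lem:poisson-bracket-vanish-iff-gamma-takes-some-values-general-case}. The hypotheses $C_{i+1}\neq C_{i+2}$ and $D_i\neq C_{i+3}$ guarantee exactly that the first two triangles of the $\mathcal{D}_i$-chain of $[\phi]$ are non-degenerate, ensuring that $\gamma_1^{\mathcal{D}_i}$ is well-defined at $[\phi]$ and that $\Phi_{d_i}$ acts by truly rotating the first $\mathcal{D}_i$-triangle around its shared vertex $D_i$ while leaving all subsequent triangles of the $\mathcal{D}_i$-chain fixed.

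The first step is to record that along the $d_i$-orbit of $[\phi]$ the distances $d(C_{i+2},D_i)$ and $d(D_i,C_{i+3})$ stay constant, while $d(C_{i+2},C_{i+3})$ varies. The second step applies the hyperbolic law of cosines twice: once at $D_i$ in the auxiliary triangle with vertices $(C_{i+2},D_i,C_{i+3})$ and interior angle $\gamma_1^{\mathcal{D}_i}$ at $D_i$, and once at $D_{i+1}$ in the triangle $(C_{i+2},C_{i+3},D_{i+1})$ whose interior angles are $(\pi-\alpha_{i+2}/2,\pi-\alpha_{i+3}/2,\pi-\delta_{i+1}/2)$. Eliminating the common term $\cosh d(C_{i+2},C_{i+3})$ from the two resulting identities yields a relation of the form
\[
\cos(\delta_{i+1}/2)=k_1\cos(\gamma_1^{\mathcal{D}_i})+k_2,
\]
where $k_1,k_2$ are constant along the $d_i$-orbit of $[\phi]$ and
\[
k_1=\sin(\alpha_{i+2}/2)\sin(\alpha_{i+3}/2)\sinh d(C_{i+2},D_i)\sinh d(D_i,C_{i+3}).
\]
The two hypotheses ensure $d(C_{i+2},D_i)>0$ and $d(D_i,C_{i+3})>0$, so $k_1\neq 0$.

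Differentiating the identity along the $d_i$-orbit and using that $\sin(\delta_{i+1}/2)\neq 0$ (since $\delta_{i+1}\in(0,2\pi)$) and that $(d\gamma_1^{\mathcal{D}_i})_{[\phi]}(X_{d_i})\neq 0$ (being an action-angle conjugate pair for $\mathcal{D}_i$), I would then conclude that $\{\delta_i,\delta_{i+1}\}([\phi])$ vanishes exactly when $\sin(\gamma_1^{\mathcal{D}_i})=0$, that is, $\gamma_1^{\mathcal{D}_i}\in\{0,\pi\}$, proving the equivalence. The main subtlety to justify carefully is that the interior angle at $D_i$ in the auxiliary triangle $(C_{i+2},D_i,C_{i+3})$ coincides with $\gamma_1^{\mathcal{D}_i}$; this reduces to unpacking the definition of the angle coordinate as the angle between consecutive triangles at the shared vertex $D_i$ from Section~\ref{sec:parametrization-by-triangle-chains}, and is consistent with the picture drawn just before the claim.
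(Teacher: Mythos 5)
Your proof is correct and follows essentially the same route as the paper's: compare the $\mathcal{D}_i$- and $\mathcal{D}_{i+1}$-triangle chains along the $d_i$-orbit, eliminate $\cosh d(C_{i+2},C_{i+3})$ via two applications of the hyperbolic law of cosines to obtain $\cos(\delta_{i+1}/2)=k_1\cos(\gamma_1^{\mathcal{D}_i})+k_2$ with $k_1\neq 0$, and differentiate. The paper is terser (it simply cites the mechanism of Lemma~\ref{lem:poisson-bracket-vanish-iff-gamma-takes-some-values-general-case} and appeals to Fact~\ref{fact:characterization-fixed-point-Dehn-twist} together with the $n\geq 5$ assumption for the degenerate direction), whereas you make the trigonometry and the nonvanishing of $k_1$ explicit, which is a harmless elaboration rather than a genuinely different argument.
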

\begin{proofclaim}
Similar arguments as in the proof of Lemma~\ref{lem:poisson-bracket-vanish-iff-gamma-takes-some-values-general-case} can be used to prove the claim. If we compare the $\mathcal{D}_i$-triangle chain and the $\mathcal{D}_{i+1}$-triangle chain of $[\rho]$, then we shall obtain a relation of the kind
\[
\cos(\delta_{i+1}/2)=\cos(\gamma_1^{\mathcal{D}_i})\cdot k_1+k_2,
\]
where $k_1$ and $k_2$ are constant along the $d_i$-orbit of $[\phi]$. Note that since we are assuming $n\geq 5$, the differential $(d\delta_{i+1})_{[\phi]}$ vanishes if and only if the first triangle in the $\mathcal{D}_{i+1}$-triangle chain of $[\phi]$ is degenerate by Fact~\ref{fact:characterization-fixed-point-Dehn-twist} and the same argument as in the proof of Claim~\ref{claim:tau_d_i-does-not-fix-any-point-in-A_jpi}. This can happen only when $\gamma_1^{\mathcal{D}_i}=0$.
\end{proofclaim}

We can now combine the previous two claims to prove the following useful statement.

\begin{lem}\label{lem:d_i->d_i+1}
If $[\phi]$ is a point of $\IntRepDT{\mathcal{B}}(\surface)$ with $\gamma_i([\phi])\notin \{0,\pi\}$ for some index $i\in\{1,\ldots,n-3\}$, then for every small enough open interval $J\subset \R$ around $0$, there exists densely many $(t_i,s_i)\in J^2$ for which
\[
\delta_{i+1}\left(\Phi_{i}^{(t_i,s_i)}([\phi])\right)\in\R\setminus\pi\Q.
\]
\end{lem}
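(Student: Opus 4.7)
The plan is to locate a dense open subset of $J^2$ where $\{\delta_i,\delta_{i+1}\}\neq 0$ at $\Phi_i^{(t_i,s_i)}([\phi])$, and then invoke Fact~\ref{fact:densely-many-irrational-points} point by point. Since $\gamma_i([\phi])\notin\{0,\pi\}$, Lemma~\ref{lem:poisson-bracket-vanish-iff-gamma-takes-some-values-general-case} yields $\{\beta_i,\delta_i\}([\phi])\neq 0$, and the argument of Section~\ref{sec:local-parameterization} allows us to shrink $J$ so that the map $(t_i,s_i)\mapsto \Phi_i^{(t_i,s_i)}([\phi])$ is a diffeomorphism onto its image. Shrinking $J$ further, we also ensure that $\gamma_i\notin\{0,\pi\}$ (equivalently $C_{i+1}\neq C_{i+2}$) throughout this image.

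We next turn to the condition $D_i\neq C_{i+3}$. By Claim~\ref{claim:D_i-neq-C_i+3}, the equality $D_i=C_{i+3}$ holds at only finitely many points of the $b_i$-orbit of $[\phi]$, hence at only finitely many $s_i\in J$. For every other $s_i$, the flow $\Phi_{d_i}^{t_i}$ fixes the shared vertex $D_i$ while rotating $C_{i+3}$ on a circle of positive radius around $D_i$, so the inequality $D_i\neq C_{i+3}$ is preserved for all $t_i$. The subset $U\subset J^2$ on which both $\gamma_i\notin\{0,\pi\}$ and $D_i\neq C_{i+3}$ hold is therefore open and dense, obtained by removing finitely many vertical lines. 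On $U$, Claim~\ref{claim:Poisson-bracket-d_i-d_i+1} gives
\[
\{\delta_i,\delta_{i+1}\}(\Phi_i^{(t_i,s_i)}([\phi]))=0 \;\Longleftrightarrow\; \gamma_1^{\mathcal{D}_i}(\Phi_i^{(t_i,s_i)}([\phi]))\in\{0,\pi\}.
\]

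The coordinate $\gamma_1^{\mathcal{D}_i}$ is the angle coordinate paired with $\delta_i$ in the $\mathcal{D}_i$-action-angle system of Section~\ref{sec:parametrization-by-triangle-chains}, and accordingly $\Phi_{d_i}^{t_i}$ shifts it linearly in $t_i$ (this is the content of~\eqref{eq:action-of-tau_b_i} transferred to the $\mathcal{D}_i$-chart via~\eqref{eq:goldman-trick}). For each admissible $s_i$, the continuous affine function $t_i\mapsto \gamma_1^{\mathcal{D}_i}(\Phi_i^{(t_i,s_i)}([\phi]))$ attains a value in $\{0,\pi\}$ at only isolated $t_i$, so the set $V\subset U$ defined by $\{\delta_i,\delta_{i+1}\}\neq 0$ is open and dense in $J^2$.

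To conclude, let $B\subset J^2$ be any open box, pick $(t_i^\star,s_i^\star)\in V\cap B$, and set $p=\Phi_i^{(t_i^\star,s_i^\star)}([\phi])$. Fact~\ref{fact:densely-many-irrational-points} applied with $f=\delta_{i+1}$ and $g=\delta_i$ at $p$ furnishes densely many $\sigma$ close to $0$ with
\[
\delta_{i+1}\bigl(\Phi_{d_i}^\sigma(p)\bigr)=\delta_{i+1}\bigl(\Phi_i^{(t_i^\star+\sigma,\,s_i^\star)}([\phi])\bigr)\in\R\setminus\pi\Q.
\]
Some such $\sigma$ keeps $(t_i^\star+\sigma, s_i^\star)$ inside $B$, producing an element of $B$ with the desired irrationality. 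The technical heart of the argument is the third paragraph: showing that the zero locus of $\{\delta_i,\delta_{i+1}\}$ is nowhere dense, which follows cleanly from the linear-shift behavior of the paired angle coordinate under its Hamiltonian flow, once the geometric preconditions of Claim~\ref{claim:Poisson-bracket-d_i-d_i+1} have been secured.
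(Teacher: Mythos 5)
Your proof is correct and follows essentially the same route as the paper's: shrink $J$ using $\{\beta_i,\delta_i\}([\phi])\neq 0$, discard the finitely many $s_i$ excluded by Claim~\ref{claim:D_i-neq-C_i+3}, use Claim~\ref{claim:Poisson-bracket-d_i-d_i+1} to see that $\{\delta_i,\delta_{i+1}\}$ vanishes only on a nowhere dense set, and finish with Fact~\ref{fact:densely-many-irrational-points}. (Only a cosmetic quibble: $C_{i+1}\neq C_{i+2}$ is equivalent to $\gamma_i\neq 0$, not to $\gamma_i\notin\{0,\pi\}$, but the implication you actually use is valid.)
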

\begin{proof}
Since we are assuming that $\gamma_i([\phi])\notin \{0,\pi\}$ and $[\phi]\in\IntRepDT{\mathcal{B}}(\surface)$, Lemma~\ref{lem:poisson-bracket-vanish-iff-gamma-takes-some-values-general-case} implies that $\{\beta_i,\delta_i\}([\phi])\neq 0$. Arguing as in Section~\ref{sec:local-parameterization} with the Inverse Function Theorem, we conclude that there exists a small enough interval 
$J\subset \R$ around $0$ such that 
\[
\Phi_i^{(t_i,s_i)}\colon J^2\to\RepDT
\]
is a diffeomorphism onto its image. Up to shrinking $J$, we may assume that for every $s_i\in J$, the point $\Phi_{b_i}^{s_i}([\phi])$ still satisfies $\gamma_i\neq 0$, and thus also $C_{i+1}\neq C_{i+2}$. Claim~\ref{claim:D_i-neq-C_i+3} says that $\Phi_{b_i}^{s_i}([\phi])$ also satisfies $D_i\neq C_{i+3}$ for all but finitely many $s_i\in J$. Along the $d_i$-orbits of every $\Phi_{b_i}^{s_i}([\phi])$ with $C_{i+1}\neq C_{i+2}$ and $D_i\neq C_{i+3}$ (which are all circles by assumption on $J$), the Poisson bracket $\{\delta_i,\delta_{i+1}\}$ only vanishes at finitely many points by Claim~\ref{claim:Poisson-bracket-d_i-d_i+1}. In particular, this means that the function $\delta_{i+1}$ takes values in $\R\setminus \pi\Q$ at densely many points along those $d_i$-orbits. Since this happens along the $d_i$-orbits of $\Phi_{b_i}^{s_i}([\phi])$ for all but finitely many $s_i\in J$, the proof of the lemma is complete.
\end{proof}

Recall that we already identified in~\eqref{eq:first-slice} a 2-dimensional slice $U_j$ that passes through $[\rho']$ and is contained in $\overline{\Orb([\rho])}$. By definition, $U_j$ lies in $\Im(\Psi)$ and so every point of $U_j$ satisfies the hypotheses of Lemma~\ref{lem:d_i->d_i+1}. This means that, up to further shrinking $I$ if necessary, the function $\delta_{j+1}$ takes values in $\R\setminus \pi\Q$ at densely many points of $U_j$. By using Fact~\ref{fact:Dehn-twists-irrational-rotations}, we conclude that 
\[
V_{j+1}=\left\{\Phi_{d_{j+1}}^{t_{j+1}}\circ\Phi_{j}^{(t_j,s_j)}([\rho']):t_j,t_{j+1},s_j\in I\right\}
\]
is entirely contained in $\overline{\Orb([\rho])}$. It is also contained in $\Im(\Psi)$ (take $s_{j+1}=0$). We have just extended $U_j$ by one dimension.
\begin{claim}\label{claim:d_i->b_i}
The function $\beta_{j+1}$ takes values in $\R\setminus \pi\Q$ at densely many points of $V_{j+1}$.
\end{claim}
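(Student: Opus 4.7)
\textbf{Proof proposal for Claim~\ref{claim:d_i->b_i}.}
The plan is to show that $\beta_{j+1}$ is a diffeomorphism onto an interval along each one-parameter ``$d_{j+1}$-slice'' of $V_{j+1}$, and then upgrade fiberwise density to density in the full three-dimensional slice. First I would use that $V_{j+1}\subset \Im(\Psi)$, and recall that $I$ was shrunk so that every point of $\Im(\Psi)$ inherits Properties~\eqref{property:1}--\eqref{property:3}; in particular every $[\phi]\in V_{j+1}$ satisfies $\gamma_{j+1}([\phi])\notin\{0,\pi\}$. Applying Lemma~\ref{lem:poisson-bracket-vanish-iff-gamma-takes-some-values-general-case} gives
\[
\{\beta_{j+1},\delta_{j+1}\}([\phi])\neq 0 \quad\text{for every } [\phi]\in V_{j+1}.
\]

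Next, for each $(t_j,s_j)\in I^2$, I would consider the curve $\sigma_{(t_j,s_j)}\colon I\to V_{j+1}$ defined by
\[
\sigma_{(t_j,s_j)}(t_{j+1})=\Phi_{d_{j+1}}^{t_{j+1}}\bigl(\Phi_{j}^{(t_j,s_j)}([\rho'])\bigr),
\]
which parametrizes an arc of a $d_{j+1}$-orbit. The derivative of $\beta_{j+1}\circ\sigma_{(t_j,s_j)}$ at $t_{j+1}$ equals $d\beta_{j+1}(X_{d_{j+1}})=-\{\delta_{j+1},\beta_{j+1}\}$ evaluated at $\sigma_{(t_j,s_j)}(t_{j+1})$, which is nowhere zero on $V_{j+1}$ by the step above. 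Hence $\beta_{j+1}\circ\sigma_{(t_j,s_j)}$ is strictly monotonic and thus a diffeomorphism from $I$ onto an open interval of $\R$. Since $\pi\Q$ is countable, its preimage in $I$ is countable and its complement
\[
T_{(t_j,s_j)}=\bigl\{t_{j+1}\in I : \beta_{j+1}(\sigma_{(t_j,s_j)}(t_{j+1}))\in\R\setminus\pi\Q\bigr\}
\]
is dense in $I$.

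Finally, I would observe that densely many $t_{j+1}$ in each fiber gives density in the whole cube: for any nonempty open box $U_1\times U_2\times U_3\subset I^3$, pick any $(t_j,s_j)\in U_1\times U_2$ and then use density of $T_{(t_j,s_j)}$ in $I$ to find $t_{j+1}\in U_3\cap T_{(t_j,s_j)}$. Because $\Psi$ restricts to a diffeomorphism from $I^3$ (with variables $(s_j,t_j,t_{j+1})$, all other coordinates set to $0$) onto $V_{j+1}$, this dense set in $I^3$ corresponds to a dense subset of $V_{j+1}$ on which $\beta_{j+1}$ takes values in $\R\setminus\pi\Q$. No step looks like a real obstacle; the only thing to double-check is that the Poisson-bracket non-vanishing is uniform on the whole slice $V_{j+1}$, which is exactly what shrinking $I$ to preserve Property~\eqref{property:1} was designed to ensure.
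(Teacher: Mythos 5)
Your proposal is correct and follows essentially the same route as the paper: the paper likewise observes that $\{\beta_{j+1},\delta_{j+1}\}\neq 0$ on all of $\Im(\Psi)$ (hence on $U_j$) and invokes Fact~\ref{fact:densely-many-irrational-points} to get densely many irrational values of $\beta_{j+1}$ along each $d_{j+1}$-orbit arc, which is exactly the monotonicity-plus-countability argument you spell out. Your extra care in upgrading fiberwise density to density in the three-dimensional slice via the diffeomorphism $\Psi$ is a detail the paper leaves implicit, but it is not a different method.
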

\begin{proofclaim}
Recall that the relation $\{\beta_{j+1},\delta_{j+1}\}\neq 0$ holds at every point of $\Im(\Psi)$ by assumption. In particular, it also holds at every point of $U_j$. Now, if we apply Fact~\ref{fact:densely-many-irrational-points}, we shall conclude that $\beta_{j+1}$ is an irrational multiple of $\pi$ at densely many points along the $d_{j+1}$-orbit of every point in $U_j$. This proves the claim.
\end{proofclaim}
Claim~\ref{claim:d_i->b_i} implies that 
\[
U_{j+1}'=\left\{\Phi_{b_{j+1}}^{s_{j+1}}\circ\Phi_{d_{j+1}}^{t_{j+1}}\circ\Phi_{j}^{(t_j,s_j)}([\rho']):t_j,t_{j+1},s_j,s_{j+1}\in I\right\}
\]
is contained in $\overline{\Orb([\rho])}$. Using Lemma~\ref{lem:permuting-Hamiltonian-flows}, and after shrinking $I$ further if necessary, we deduce that the set 
\[
U_{j+1}=\left\{\Phi_{j+1}^{(t_{j+1},s_{j+1})}\circ\Phi_{j}^{(t_j,s_j)}([\rho']):t_j,t_{j+1},s_j,s_{j+1}\in I\right\}
\]
is also entirely contained in $\overline{\Orb([\rho])}$. We have now extended the slice $U_j$ by two extra dimensions. This procedure can be iterated until we shall obtain that
\[
U_{n-3}=\left\{\Phi_{n-3}^{(t_{n-3},s_{n-3})}\circ\cdots\circ\Phi_{j}^{(t_j,s_j)}([\rho']):t_j,s_j,\ldots,t_{n-3},s_{n-3}\in I\right\}
\]
is contained in $\overline{\Orb([\rho])}$. The set $U_{n-3}$ is an $(n-j-2)$-dimensional neighborhood of $[\rho']$ that is contained in $\overline{\Orb([\rho])}$. It is still not an open neighborhood of $[\rho]$ (except when $j=1$). The purpose of the next section is to enlarge $U_{n-3}$ to an open neighborhood of~$[\rho']$.

\subsubsection{Extending the slice by decreasing the index}\label{sec:decrease}
In order to enlarge $U_{n-3}$, we need an analogue of Lemma~\ref{lem:d_i->d_i+1} which shows that
\[
\delta_{i-1}\left(\Phi_{i}^{(t_i,s_i)}([\phi])\right)\in\R\setminus\pi\Q
\]
for densely many $(t_i,s_i)$ allowing us to extend $U_{n-3}$ along the direction of $\Phi_{d_{i-1}}$. We can obtain such a result by looking at different pants decompositions than the ones we considered in Section~\ref{sec:increase}. Consider the system of geometric generators of $\pi_1\Sigma$  given by
\[
(c_{i+1}, c_{i+2}, c_i, c_i^{-1}c_{i+3}c_i,\ldots, c_i^{-1}c_{i-1}c_i).
\]
We denote the associated standard pants decomposition by $\mathcal{D}'_i$. The first triangle in the $\mathcal{D}_i'$-triangle chain of any point in $\RepDT$ coincides with the first triangle in its $\mathcal{D}_i$-triangle chain because the first two generators coincide. They both have vertices $(C_{i+1},C_{i+2},D_i)$. The two triangle chains differ from the second triangle onward. For instance, the second triangle in a $\mathcal{D}_i'$-triangle chain has vertices $D_i$, $C_{i}$, and a third vertex that is the fixed point of $\phi((c_{i+1}c_{i+2}c_i)^{-1})$.
\begin{center}
\begin{tikzpicture}[font=\sffamily]
    
\node[anchor=south west,inner sep=0] at (0,0) {\includegraphics[width=6cm]{fig/fig-triangle-chain-3}};

\begin{scope}
\fill (0.25,1.35) circle (0.07) node[left]{$B_{i-2}$};
\fill (1.25,2.7) circle (0.07) node[left]{$B_{i-1}$};
\fill (2.85,2.25) circle (0.07) node[below left]{$B_{i}$};
\fill (2.87,0.3) circle (0.07) node[left]{$B_{i+1}$};
\fill (0.25,5.45) circle (0.07) node[left]{$C_{i}$};
\fill (4.35,5.4) circle (0.07) node[right]{$C_{i+1}$};
\fill (5.7,1.5) circle (0.07) node[right]{$C_{i+2}$};
\fill[mauve] (2.3,3.1) circle (0.07) node[left]{$D_{i}$};
\end{scope}

\draw[mauve] (0.25,5.45) to[bend left=15] (2.3,3.1);
\draw[mauve] (4.35,5.4) to[bend left=5] (5.7,1.5);
\draw[mauve] (2.3,3.1) to[bend left=10] (4.35,5.4);
\draw[mauve] (2.3,3.1) to[bend left=25] (5.7,1.5);

\draw[mauve, thick] (5,2.2) arc (140:98:.7) node[at end, right]{\small $\pi-\alpha_{i+2}/2$};
\draw[mauve, thick] (3.9,5) arc (220:294:.6) node[at end, right]{\small $\pi-\alpha_{i+1}/2$};
\draw[plum, thick] (3.1,3.05) arc (-5:120:.8);
\draw[plum] (2.1,4.4) node{\small $\gamma_1^{\mathcal{D}'_i}$};
\draw[mauve, thick] (3.3,3) arc (-5:58:.95) node[near start, right]{\small $\pi-\delta_{i}/2$};
\end{tikzpicture}
\end{center}

An analogous statement to Claim~\ref{claim:D_i-neq-C_i+3} holds. It can be proved using similar arguments.

\begin{claim}\label{claim:D_i-neq-C_i}
If $[\phi]$ is any point of $\IntRepDT{\mathcal{B}}(\surface)$ with $\gamma_i([\phi])\neq 0$ for some index $i$, then only finitely many points along the $b_i$-orbit of $[\phi]$ have $D_i=C_{i}$.
\end{claim}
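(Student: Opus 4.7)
The proof will follow the template of Claim~\ref{claim:D_i-neq-C_i+3}, with one essential new ingredient: unlike $C_{i+3}$, the point $C_i$ is \emph{not} fixed along the $b_i$-orbit. Indeed, the first $i$ triangles of the $\mathcal{B}$-triangle chain rotate rigidly about the shared vertex $B_i$, carrying both $C_i$ and $C_{i+1}$ by the same hyperbolic rotation, while $C_{i+2}$ stays in place. I would parametrize the $b_i$-orbit by the rotation angle $\theta \in \R/\pi\Z$, let $R_\theta$ denote the corresponding rotation around $B_i$, and set $C_i(\theta) = R_\theta(C_i)$ and $C_{i+1}(\theta) = R_\theta(C_{i+1})$.

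Since $\gamma_i([\phi]) \neq 0$, one has $C_{i+1} \neq C_{i+2}$, and by analyticity this persists for all $\theta$ outside a finite exceptional set. Outside this set, $D_i(\theta)$ is well-defined as the third vertex of the clockwise-oriented triangle with interior angles $\pi - \alpha_{i+1}/2$ and $\pi - \alpha_{i+2}/2$ at $C_{i+1}(\theta)$ and $C_{i+2}$ respectively, and it depends real-analytically on $\theta$. The equation $D_i(\theta) = C_i(\theta)$ is therefore a real-analytic condition on $\theta$: it either holds at finitely many $\theta$, or identically.

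The main obstacle is to rule out the identical case. Suppose, for contradiction, that $D_i(\theta) = C_i(\theta)$ for all $\theta$. Since $C_i$ and $C_{i+1}$ undergo the \emph{same} rigid rotation, the distance
\[
d\bigl(C_{i+1}(\theta), D_i(\theta)\bigr) = d\bigl(C_{i+1}(\theta), C_i(\theta)\bigr) = d(C_{i+1}, C_i)
\]
is constant in $\theta$. In the triangle $(C_{i+1}(\theta), C_{i+2}, D_i(\theta))$ the two angles at $C_{i+1}(\theta)$ and $C_{i+2}$ are constants, so by the hyperbolic AAS rigidity, $d(C_{i+1}(\theta), D_i(\theta))$ is a real-analytic and \emph{non-constant} function of $d(C_{i+1}(\theta), C_{i+2})$ (it vanishes as the triangle degenerates to a point). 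Its constancy therefore forces $d(C_{i+1}(\theta), C_{i+2})$ to be constant in $\theta$ as well. However, $\theta \mapsto R_\theta(C_{i+1})$ traces a circle centered at $B_i$ of positive radius (since $C_{i+1} \neq B_i$, as the $i$-th triangle of the $\mathcal{B}$-chain is non-degenerate), while $C_{i+2} \neq B_i$ (because the triangle of the $\mathcal{B}$-chain adjacent to $B_i$ on the other side is also non-degenerate). The distance $\theta \mapsto d(R_\theta C_{i+1}, C_{i+2})$ is therefore non-constant, giving the desired contradiction.
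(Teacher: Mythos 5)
Your proof is correct, and it correctly isolates the one point where the template of Claim~\ref{claim:D_i-neq-C_i+3} breaks down: $C_i$, unlike $C_{i+3}$, is carried along by the rigid rotation of the first $i$ triangles about $B_i$, so the synthetic ``circle meets a fixed geodesic'' count from that proof does not apply verbatim. The paper only asserts that ``similar arguments'' work; the adaptation it has in mind is to pass to the rotating frame (equivalently, to note that $D_i'=C_i'$ forces the angle $\angle B_iC_{i+1}'C_{i+2}$ to take a fixed value, and then to intersect the circle of radius $d(B_i,C_{i+1})$ about $B_i$ with the resulting locus), again giving an explicit bound. Your route instead exploits that $d(C_{i+1}(\theta),C_i(\theta))$ is constant along the orbit while, in the ASA-determined triangle $(C_{i+1},C_{i+2},D_i)$, the side $d(C_{i+1},D_i)$ depends strictly monotonically on the base $d(C_{i+1},C_{i+2})$; this is a valid and genuinely different (more metric) argument, and the final contradiction via the non-constancy of $\theta\mapsto d(R_\theta C_{i+1},C_{i+2})$ is sound, with the hypotheses $C_{i+1}\neq B_i$ and $C_{i+2}\neq B_i$ correctly traced back to $[\phi]\in\IntRepDT{\mathcal{B}}(\surface)$. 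Two small points. First, you can drop the real-analyticity dichotomy entirely: on $S=\{\theta:D_i(\theta)=C_i(\theta)\}$ the strict monotonicity (which follows from the two hyperbolic laws of cosines, exactly as in the proof of Lemma~\ref{lem:poisson-bracket-vanish-iff-gamma-takes-some-values}) pins $d(R_\theta C_{i+1},C_{i+2})$ to a single value, and a positive-radius circle meets a metric sphere about a point distinct from its center in at most two points, so $|S|\le 2$. Second, as written the dichotomy ``finitely many or identically'' is slightly imprecise because the domain is a circle minus finitely many exceptional angles, so isolated zeros could a priori accumulate at an exceptional $\theta_*$; this is easy to exclude (accumulation at $\theta_*$ forces $d(C_{i+1},C_i)=\lim_{\theta\to\theta_*} d(C_{i+1}(\theta),D_i(\theta))=0$, hence $C_i=C_{i+1}$ and then $D_i=C_{i+1}$, impossible while the first $\mathcal{D}_i$-triangle is non-degenerate), but it should be said.
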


If the $\mathcal{D}_i'$-triangle chain of a point $[\phi]$ satisfies $C_{i+1}\neq C_{i+2}$ and $D_i\neq C_i$, then its first two triangle are non-degenerate. So, there is a well-defined angle coordinate $\gamma_i^{\mathcal{D}_i'}$ which the angle between the geodesic rays $[D_iC_{i+2})$ and $[D_i,C_i)$. We can reformulate Claim~\ref{claim:Poisson-bracket-d_i-d_i+1} in those terms.

\begin{claim}\label{claim:Poisson-bracket-d_i-d_i-1}
If $[\phi]$ is a point with $C_{i+1}\neq C_{i+2}$ and $D_i\neq C_{i}$ for some index $i$, then 
\[
\{\delta_{i-1},\delta_{i}\}([\phi])=0\quad\Leftrightarrow\quad \gamma_1^{\mathcal{D}_i'}\in\{\pi-\delta_i/2,2\pi-\delta_i/2\}.
\]
\end{claim}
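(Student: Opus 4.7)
The plan is to mirror the proof of Claim~\ref{claim:Poisson-bracket-d_i-d_i+1}, replacing the $\mathcal{D}_i$-chain by the $\mathcal{D}_i'$-chain as the auxiliary structure, in order to derive along the $d_i$-orbit of $[\phi]$ a trigonometric relation of the form
\[
\cos(\delta_{i-1}/2) = c_0 - c_1 \cos(\gamma_1^{\mathcal{D}_i'} + \delta_i/2),
\]
where $c_0, c_1$ are constants along that orbit with $c_1 \neq 0$. Differentiating along the orbit, on which $\delta_i$ is constant, will then recover the claim.

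The two triangles I will use are the auxiliary triangle $(C_i, D_i, C_{i+1})$ and the triangle $(C_i, C_{i+1}, D_{i-1})$, whose interior angles $(\pi - \alpha_i/2, \pi - \alpha_{i+1}/2, \pi - \delta_{i-1}/2)$ come from the first triangle of the standard pants decomposition built from the generators $(c_i, c_{i+1}, \ldots)$. The Hamiltonian flow of $\delta_i$ keeps the first triangle $(C_{i+1}, C_{i+2}, D_i)$ of the $\mathcal{D}_i'$-chain pointwise fixed and rigidly rotates all subsequent triangles around $D_i$; hence $D_i$ and $C_{i+1}$ are stationary along the $d_i$-orbit and $C_i$ moves along a circle centered at $D_i$. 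In particular $d(D_i, C_{i+1})$ and $d(D_i, C_i)$ are both constants of the orbit. Using that $\gamma_1^{\mathcal{D}_i'}$ is the counter-clockwise angle at $D_i$ from $[D_i C_{i+2})$ to $[D_i C_i)$, and that the interior angle of the first triangle at $D_i$ is the counter-clockwise sweep $\pi - \delta_i/2$ from $[D_i C_{i+2})$ to $[D_i C_{i+1})$, the interior angle at $D_i$ in the auxiliary triangle equals $\gamma_1^{\mathcal{D}_i'} - (\pi - \delta_i/2)$ modulo $2\pi$, with cosine $-\cos(\gamma_1^{\mathcal{D}_i'} + \delta_i/2)$. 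Combining the hyperbolic law of cosines in the auxiliary triangle with the dual form in $(C_i, C_{i+1}, D_{i-1})$ to eliminate $\cosh d(C_i, C_{i+1})$ yields the displayed relation with $c_1 = \sin(\alpha_i/2)\sin(\alpha_{i+1}/2)\sinh d(D_i, C_i) \sinh d(D_i, C_{i+1})$. The hypotheses $C_{i+1} \neq C_{i+2}$, which forces $D_i \neq C_{i+1}$, and $D_i \neq C_i$ guarantee $c_1 \neq 0$.

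Differentiating the displayed relation along the $d_i$-orbit of $[\phi]$ and using that $\delta_i$ is constant there gives
\[
\{\delta_i, \delta_{i-1}\}([\phi]) = -d\delta_{i-1}(X_{d_i}) = \frac{2 c_1 \sin(\gamma_1^{\mathcal{D}_i'} + \delta_i/2)}{\sin(\delta_{i-1}/2)}\, d\gamma_1^{\mathcal{D}_i'}(X_{d_i}).
\]
The denominator is nonzero since $\delta_{i-1} \in (0, 2\pi)$, and $d\gamma_1^{\mathcal{D}_i'}(X_{d_i}) \neq 0$ since $(\delta_i, \gamma_1^{\mathcal{D}_i'})$ is a Darboux pair of action-angle coordinates on the $\mathcal{D}_i'$-chain. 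Hence the Poisson bracket vanishes if and only if $\sin(\gamma_1^{\mathcal{D}_i'} + \delta_i/2) = 0$; given the range $\gamma_1^{\mathcal{D}_i'} + \delta_i/2 \in (0, 3\pi)$, the only solutions are $\gamma_1^{\mathcal{D}_i'} \in \{\pi - \delta_i/2,\, 2\pi - \delta_i/2\}$, as claimed. The main obstacle I anticipate is the angular accounting at $D_i$: an off-by-$\pi$ or sign error in expressing the interior angle of the auxiliary triangle in terms of $\gamma_1^{\mathcal{D}_i'}$ and $\delta_i$ would produce a different critical set for $\gamma_1^{\mathcal{D}_i'}$, so careful attention to the counter-clockwise convention defining $\gamma_1^{\mathcal{D}_i'}$ and to the clockwise orientation of the chain triangles is essential.
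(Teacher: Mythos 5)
Your proposal is correct and is exactly the argument the paper intends: the paper states this claim as a reformulation of Claim~\ref{claim:Poisson-bracket-d_i-d_i+1}, whose proof in turn mimics Lemma~\ref{lem:poisson-bracket-vanish-iff-gamma-takes-some-values-general-case} by comparing two triangle chains via the hyperbolic law of cosines and differentiating along the relevant Hamiltonian orbit. Your derivation of $\cos(\delta_{i-1}/2)=c_0-c_1\cos\big(\gamma_1^{\mathcal{D}_i'}+\delta_i/2\big)$ with $c_1\neq 0$ under the stated hypotheses, and the resulting zero locus $\gamma_1^{\mathcal{D}_i'}\in\{\pi-\delta_i/2,\,2\pi-\delta_i/2\}$, supplies precisely the details the paper leaves implicit, including the correct angular bookkeeping at $D_i$.
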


When we combine Claims~\ref{claim:D_i-neq-C_i} and~\ref{claim:Poisson-bracket-d_i-d_i-1}, we obtain the desired variant of Lemma~\ref{lem:d_i->d_i+1} that will allow us to enlarge $U_{n-3}$.

\begin{lem}\label{lem:d_i->d_i-1}
If $[\phi]$ is a point of $\IntRepDT{\mathcal{B}}(\surface)$ with $\gamma_i([\phi])\notin \{0,\pi\}$ for some index $i\in\{1,\ldots,n-3\}$, then for every small enough open interval $J\subset \R$ around $0$, there exist densely many $(t_i,s_i)\in J^2$ for which
\[
\delta_{i-1}\left(\Phi_{i}^{(t_i,s_i)}([\phi])\right)\in\R\setminus\pi\Q.
\]
\end{lem}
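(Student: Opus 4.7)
The plan is to mimic the proof of Lemma~\ref{lem:d_i->d_i+1} essentially word for word, substituting the ``increasing-index'' Claims~\ref{claim:D_i-neq-C_i+3} and~\ref{claim:Poisson-bracket-d_i-d_i+1} by their ``decreasing-index'' counterparts Claims~\ref{claim:D_i-neq-C_i} and~\ref{claim:Poisson-bracket-d_i-d_i-1}, which the authors have already set up with precisely this reuse in mind. First I would invoke Lemma~\ref{lem:poisson-bracket-vanish-iff-gamma-takes-some-values-general-case}: under the hypotheses $\gamma_i([\phi])\notin\{0,\pi\}$ and $[\phi]\in\IntRepDT{\mathcal{B}}(\surface)$, it gives $\{\beta_i,\delta_i\}([\phi])\neq 0$. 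By the Inverse Function Theorem (exactly as in Section~\ref{sec:local-parameterization}) there exists a small open interval $J\subset\R$ around $0$ on which $(t_i,s_i)\mapsto\Phi_i^{(t_i,s_i)}([\phi])$ is a diffeomorphism onto its image, and after shrinking $J$ once if needed, every point in the image still satisfies $\gamma_i\notin\{0,\pi\}$; in particular $C_{i+1}\neq C_{i+2}$ throughout.

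Next I would apply Claim~\ref{claim:D_i-neq-C_i} to the $b_i$-orbit of $[\phi]$: for all but finitely many $s_i\in J$, the point $\Phi_{b_i}^{s_i}([\phi])$ satisfies $D_i\neq C_i$. Fix any such $s_i$. The $d_i$-orbit of $\Phi_{b_i}^{s_i}([\phi])$ is a genuine circle since the first triangle of its $\mathcal{D}_i$-chain is non-degenerate (as $C_{i+1}\neq C_{i+2}$), and along this circle Claim~\ref{claim:Poisson-bracket-d_i-d_i-1} tells us that $\{\delta_{i-1},\delta_i\}$ vanishes only where the auxiliary angle $\gamma_1^{\mathcal{D}_i'}$ takes one of the two values $\pi-\delta_i/2$ or $2\pi-\delta_i/2$, which together cut out a finite subset of the circle. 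Hence $\delta_{i-1}$ is non-constant on a dense open portion of that $d_i$-orbit, so it attains densely many values there, and in particular densely many irrational multiples of~$\pi$.

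Combining the two densities, over $s_i\in J$ (away from the finitely many exceptions coming from Claim~\ref{claim:D_i-neq-C_i}) and $t_i\in J$ (along each of the resulting circles), produces densely many pairs $(t_i,s_i)\in J^2$ at which $\delta_{i-1}(\Phi_i^{(t_i,s_i)}([\phi]))\in\R\setminus\pi\Q$, which is exactly the stated conclusion. No real obstacle arises: the whole point of having recorded Claims~\ref{claim:D_i-neq-C_i} and~\ref{claim:Poisson-bracket-d_i-d_i-1} in the preceding paragraphs is to make this proof a verbatim transcription of that of Lemma~\ref{lem:d_i->d_i+1}. The only mildly delicate bookkeeping is checking that the exceptional set where $D_i=C_i$ along the $b_i$-orbit is indeed finite---and that is precisely Claim~\ref{claim:D_i-neq-C_i}.
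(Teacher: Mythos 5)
Your proposal is correct and is exactly the argument the paper intends: the paper gives no explicit proof of Lemma~\ref{lem:d_i->d_i-1} beyond the remark that it follows by combining Claims~\ref{claim:D_i-neq-C_i} and~\ref{claim:Poisson-bracket-d_i-d_i-1} in the same way that the proof of Lemma~\ref{lem:d_i->d_i+1} combines Claims~\ref{claim:D_i-neq-C_i+3} and~\ref{claim:Poisson-bracket-d_i-d_i+1}, and your write-up is a faithful transcription of that scheme. One cosmetic note: the $d_i$-orbits being circles follows from $\{\beta_i,\delta_i\}\neq 0$ persisting on the shrunken neighborhood (which is the actual content of the Inverse Function Theorem step, as the paper puts it, ``by assumption on $J$''), rather than solely from the non-degeneracy of the first $\mathcal{D}_i$-triangle; the conclusion you draw is nevertheless correct.
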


Here is how we can keep enlarging $U_{n-3}$. First, we use Lemma~\ref{lem:permuting-Hamiltonian-flows} to modify $U_{n-3}$ slightly and affirm that 
\[
W_{n-3}=\left\{\Phi_{j}^{(t_j,s_j)}\circ \Phi_{n-3}^{(t_{n-3},s_{n-3})}\circ\cdots\circ\Phi_{j+1}^{(t_{j+1},s_{j+1})}([\rho']):t_j,s_j,\ldots,t_{n-3},s_{n-3}\in I\right\}
\]
is also contained in $\overline{\Orb([\rho'])}$ (we might have to shrink $I$ if necessary). Now, we can apply Lemma~\ref{lem:d_i->d_i-1} and conclude that, after maybe shrinking $I$ further,
\[
V_{j-1}=\bigcup_{t_{j-1}\in I}\Phi_{d_{j-1}}^{t_{j-1}}(W_{n-3})\subset \overline{\Orb([\rho'])}.
\]
If we apply Claim~\ref{claim:d_i->b_i}, we shall further obtain that 
\[
U_{j-1}'=\bigcup_{t_{j-1},s_{j-1}\in I}\Phi_{b_{j-1}}^{s_{j-1}}\circ \Phi_{d_{j-1}}^{t_{j-1}}(W_{n-3}) \subset \overline{\Orb([\rho'])}.
\]
Lemma~\ref{lem:permuting-Hamiltonian-flows} allows us, up to shrinking $I$ further, to permute Hamiltonian flows, so that we have 
\[
U_{j-1}=\bigcup_{t_{j-1},s_{j-1}\in I}\Phi_{j-1}^{(t_{j-1},s_{j-1})}(W_{n-3})\subset \overline{\Orb([\rho'])}.
\]
Repeating this process, we shall eventually obtain that 
\[
U_{1}=\bigcup_{t_1,\ldots,s_{j-1}\in I}\Phi_{1}^{(t_{1},s_{1})}\circ\cdots\circ\Phi_{j-1}^{(t_{j-1},s_{j-1})}(W_{n-3})\subset \overline{\Orb([\rho'])}.
\]
After applying Lemma~\ref{lem:permuting-Hamiltonian-flows} one last time and shrinking $I$ if necessary, we conclude that
\[
U_{[\rho']}=\left\{\Phi_{1}^{(t_{1},s_{1})}\circ\cdots\circ \Phi_{n-3}^{(t_{n-3},s_{n-3})}([\rho']):t_i,s_i\in I\right\}
\]
is entirely contained in $\overline{\Orb([\rho'])}$. The set $U_{[\rho']}$ is the desired an open neighborhood of~$[\rho']$. 

\subsection{Finishing the proof}\label{sec:finishing-the-proof}
So far, we have proven that when $\Orb([\rho])$ is infinite, then it contains an orbit point $[\rho']$ for which we were able to construct an open neighborhood $U_{[\rho']}$ that is contained in $\overline{\Orb([\rho])}$. This shows that $\overline{\Orb([\rho])}$ is a closed invariant set with non-empty interior. We conclude that $\overline{\Orb([\rho])}=\RepDT$ because of the ergodicity of the mapping class group action on $\RepDT$ (Theorem~\ref{thm:ergodicity}), as we explained in Remark~\ref{rem:alternative-end-of-proof}. This completes the proof of Theorem~\ref{thm:infinite-orbits-are-dense}.

\bibliographystyle{amsalpha}
\bibliography{bibliography.bib}
\end{document}